\documentclass[12pt,a4paper, reqno]{amsart}
\usepackage[utf8]{inputenc}
\usepackage{amsmath}
\usepackage{amsfonts}
\usepackage{amssymb}
\usepackage{amsthm}
\usepackage{graphicx}
\usepackage{helvet}
\usepackage{amsfonts,amsmath,verbatim,amssymb,amscd,latexsym}
\usepackage{amsthm, calc, enumerate, amscd}
\usepackage[english]{babel}
\usepackage{hyperref}
\usepackage[margin=2.6cm]{geometry}

\newtheorem{theorem}{Theorem}

\newtheorem{lemma}{Lemma}

\begin{document}

\author{Mohd Harun,\,\,Saurabh Kumar Singh}

\title{Shifted convolution sum with weighted average : $GL(3) \times GL(3)$ setup}

\address{ Saurabh Kumar Singh \newline {\em Department of Mathematics and Statistics, Indian Institute of Technology, Kanpur, India; \newline  Email: skumar.bhu12@gmail.com
} }
\address{ Mohd Harun \newline {\em Department of Mathematics and Statistics, Indian Institute of Technology, Kanpur, India; \newline  Email: harunmalikjmi@gmail.com
} }
\subjclass[2010]{Primary 11F03, 11F11, 11F30, 11F37, 11F66}
\date{\today}
\keywords{Averages Shifted convolution sum, Maass forms, Voronoi summation formula, Poisson summation formula}

\maketitle	
	
\begin{abstract}  
This article will prove non-trivial estimates for the average and weighted average version of general $GL(3) \times GL(3)$ shifted convolution sums by using the circle method.
\end{abstract}

\section{Introduction}
Let $f(n)$ and $g(n)$ denote two arithmetical functions. The associated Shifted Convolution Sum (SCS) is defined as follows:
\begin{align*}
\mathcal{L}(X,f,g): = \sum_{n \leq X} f(n) g(n+h), \hspace{1cm} h \in \mathbb{Z}\setminus\{0\}.
\end{align*}
In modern number theory, numerous prominent problems involve investigating the asymptotic behavior or obtaining non-trivial estimates for various SCSs. These problems include finding asymptotics or non-trivial bounds for the moments of $L$-functions, subconvexity, additive divisor problems, Chowla's conjecture, and more. In this section, our focus will be on concise literature directly related to the problems addressed in this paper.

In the past few decades, substantial efforts have been made to study the additive divisor problems that seek the asymptotic evaluation of the sum
\begin{align}\label{z1a}
\mathcal{L}(X,d_k,d_{\ell}) = \sum_{n\leq X} d_k(n) d_{\ell}(n+h),
\end{align}
where $d_k(n)$ represents the generalized divisor function, counting the number of ways of writing $n$ as the product of $k$ positive integers. When $k=\ell = 2$, this sum is associated with a well-known problem called the Binary Additive Divisor Problem, which has been extensively investigated in the past (see \cite{r21} for a historical survey). Interest in the sum $\mathcal{L}(X,d_2,d_{2})$ is significantly driven by its connection to the Riemann zeta function $\zeta(s)$. The behavior of $\zeta(s)$ on the critical line can be studied through the moments
\begin{align*}
M_q(T, \zeta) := \int_{0}^{T} \left|\zeta\left(\frac{1}{2} +it\right)\right|^{2q} dt.
\end{align*}
Asymptotic formulas for the second moment $M_1(T,\zeta)$ and even for the fourth moment $M_2(T,\zeta)$ have already been established (see \cite{r31}). Applying the approximate functional equation for $\zeta(s)$ simplifies the task of determining the asymptotic behavior of $M_2(T, \zeta)$ to that of finding the asymptotic behavior of the SCS $\mathcal{L}(X, d_2, d_2)$. However, finding asymptotic behavior or non-trivial bounds for the higher moments $M_q(T,\zeta)$ with $q\geq 3$ remains an open problem. In this context, studying SCSs $\mathcal{L}(X,d_k,d_{k})$ for $k \geq 3$ could shed light on the higher moments of $\zeta(s)$. Nevertheless, even for the simplest case of $k=3$ and $h=1$, the asymptotics remain unknown. The arithmetical function $d_k(n)$ can also be interpreted as the $n$-th coefficient of the Dirichlet series for the degree-3 $L$-function $\zeta^k(s)$. Consequently, $\mathcal{L}(X,d_3,d_{3})$ represents a particular instance of $GL(3) \times GL(3)$ SCSs. Regarding the average version of $\mathcal{L}(X,d_3,d_{3})$, some progress has been made by S. Baier, T.D. Browning, G. Marasingha, and L. Zhao \cite{r30}.

For $k=3,\, \ell =2$ or $k=2,\, \ell =3$, there is ample literature (see \cite{r25}, \cite{r26}, \cite{r27}, \cite{r28}) about these SCSs and the recent progress in this direction is due to Topacogullari \cite{r23}. In 1995, Pitt considered this problem of SCS involving normalized Fourier coefficients $\lambda_{f}(n)$ of any holomorphic cusp form $f$ for $SL(2, \mathbb{Z})$ and $d_3(n)$.  In his paper \cite{r6}, Pitt proved that, for $0 < r < X^{1/24}$, 
\begin{align*}
	\mathcal{L}(X,d_3,\lambda_f) =  \sum_{n\leq X} d_3(n)\, \lambda_f(rn-1)\ll_{\zeta^3,f,\epsilon}\, X^{1-1/72 +\epsilon}.
\end{align*} 
This is a non-trivial bound with a saving of size $X^{1/72}$ over the trivial bound $X^{1+\epsilon}$. After Pitt's seminal result, the problem of SCSs for $GL(3) \times GL(2)$ has garnered significant attention from number theorists. Notable developments related to this problem can be found in \cite{r11}, \cite{r5}, \cite{DUKE}, \cite{r19}, \cite{r10}, and \cite{r12}. Recently, Munshi approached this problem in a more general setting by replacing the specific $GL(3)$ Fourier coefficients $d_3(n)$ in Pitt's problem with $A_\pi(1,n)$, which are the normalized Fourier coefficients of any Hecke-Maass cusp forms $\pi$ for $SL(3,\mathbb{Z})$. In \cite{r7}, utilizing Jutila's variant of the circle method and the idea of factorizable moduli, Munshi proved (after a minor correction in his Lemma 11) the following result:
\[ \mathcal{L}(X,A_{\pi},\lambda_f) = \sum_{n=1}^\infty A_\pi(1,n) \lambda_f(n+h) V\left( \frac{n}{X}\right) \ll_{\pi, f, \epsilon}\, X^{1-1/26+\epsilon} ,\]    
where $V$ is a smooth compactly supported function and $1\leq |h| \leq X^{1+\epsilon}$ is an integer. Building upon Munshi's work, Xi \cite{r8} further investigated the same SCS and employed a similar variant of the delta method. However, Xi adopted a distinct approach to handle the involved character sum, resulting in an enhancement of Munshi's results. Specifically, he demonstrated:
\begin{align*}
\mathcal{L}(X,A_{\pi},\lambda_f) \ll_{\pi,f,\epsilon}\, X^{1-1/22 + \epsilon}.    
\end{align*}
Subsequently, taking into account Munshi's remark in \cite{r7} and adopting the same variant of the circle method (Jutila's), Sun \cite{r9} established a non-trivial estimate for the average version of the aforementioned SCS. For $r^{5/2}X^{1/4 + 7\delta/2} \leq H \leq X$ with $\delta >0$, she demonstrated
\[ \frac{1}{H}\sum_{h=1}^\infty V\left( \frac{h}{H}\right)\sum_{n=1}^\infty  \lambda(n)  \lambda_f(rn+h) W\left( \frac{n}{X} \right) \ll X^{1-\delta+ \epsilon}, \]
where $H > 0$, $W$ represents another compactly supported smooth function, $r\geq 1$ is an integer, and $\lambda(n)$ can be either $A_{\pi}(1,n)$ or $d_3(n)$. In \cite{r29}, the authors of the present article also established a non-trivial bound of similar strength for an extended version of Sun's problem. However, we employed a different variant of the circle method, namely the Duke, Friedlander, Iwaniec (DFI) circle method. Specifically, we investigated the weighted average version of the SCS for $GL(3) \times GL(2)$ with the weight being the $GL(2)$ Fourier coefficient with $h$-sum. For $X^{1/4+\delta} \leq H \leq X$ with $\delta >0$, we established
\[\frac{1}{H}\sum_{h=1}^\infty \lambda_g(h)\, V\left( \frac{h}{H}\right)\sum_{n=1}^\infty  A_{\pi}(1,n)  \lambda_f(n+h) W\left( \frac{n}{X} \right)\ll X^{1-\delta +\epsilon}, \]
where $\lambda_g(n)$ and $\lambda_f(n)$ are $GL(2)$ Fourier coefficients, and $A_{\pi}(1,n)$ is a $GL(3)$ Fourier coefficient.

In this article, we will establish non-trivial bounds for two variants of $GL(3) \times GL(3)$ SCSs. One is the average version, and the other is the weighted average version, which can be viewed as the $GL(2)$ twist over the $h$-sum. More precisely, we investigate the following SCSs, denoted by $\mathcal{D}(H,X)$ and $\mathcal{L}(H,X)$.
\begin{align*}
\mathcal{D}(H,X) = \frac{1}{H}\sum_{h=1}^\infty \lambda_f(h) W_1\left( \frac{h}{H}\right)\sum_{n=1}^\infty A_{\pi_1}(1,n) A_{\pi_2}(1,n+h) W_2\left( \frac{n}{X} \right),
\end{align*} and
\begin{align*}
\mathcal{L}(H,X) = \frac{1}{H}\sum_{h=1}^\infty W_1\left( \frac{h}{H}\right)\sum_{n=1}^\infty A_{\pi_1}(1,n) A_{\pi_2}(1,n+h) W_2\left( \frac{n}{X} \right),
\end{align*} where $A_{\pi_1}(1,n)$ and $A_{\pi_2}(1,n)$ are the normalized Fourier coefficients of $SL(3,\mathbb{Z})$ Hecke-Maass cusp forms $\pi_1$ and $\pi_2$, respectively, $\lambda_f(n)$ represents the normalized Fourier coefficients of a holomorphic or Hecke-Maass cusp form $f$ for $SL(2,\mathbb{Z})$. The functions $W_1$ and $W_2$ are chosen to be smooth and compactly supported. Using the Cauchy-Schwarz inequality and estimates from the Rankin-Selberg theory for the Fourier coefficients, we obtain the following trivial bound:
\begin{align*}
\mathcal{D}(H,X) \ll_{\pi_1,\pi_2,f,\epsilon}, X^{1+\epsilon}, \hspace{1cm} \mathcal{L}(H,X) \ll_{\pi_1,\pi_2,\epsilon}, X^{1+\epsilon}.
\end{align*}
Utilizing the DFI circle method once again, we establish the following two main results in this paper.

\begin{theorem}\label{th1}
	Let $\mathcal{D}(H, X)$ be as defined above. For any $\epsilon > 0$ and $X^{1/2 + \delta} \leq H < X^{1-\epsilon
    } $ with $\delta > 0$, we have
	\begin{align*}
		\mathcal{D}(H,X) \ll X^{1-\delta+\epsilon},
	\end{align*} where the implied constants are depending upon $\pi_1, \pi_2$,$f$ and $\epsilon$.
\end{theorem}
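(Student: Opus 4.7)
The plan is to open the delta-symbol $\delta(n+h-m=0)$ (after the substitution $m=n+h$, so $m \in [X, 2X+H]$) via the DFI circle method at an as-yet-unspecified level $Q$, which writes the characteristic function of $n+h-m=0$ as a sum over moduli $q \leq Q$ of Ramanujan-type exponentials multiplied by a smooth integral. This separates the three variables $n$, $m$, $h$ into independent smooth sums weighted by additive characters modulo $q$, which is the setup required for Voronoi summation. The modulus $Q$ will ultimately be optimized around a power of $X$ close to $\sqrt{X/H}$ or similar, reflecting the balance required once all three Voronoi formulas are applied.

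Next I would apply three Voronoi-type summation formulas in succession. On the $n$-sum $\sum_n A_{\pi_1}(1,n)\,e(-\overline{a}n/q)\,V_1(n)$ I would apply the $GL(3)$ Voronoi of Miller--Schmid, producing a dual sum roughly of length $q^3/X$ with coefficients $A_{\pi_1}(n_2,n_1)$ times a Kloosterman sum; symmetrically, the $m$-sum yields a dual sum of length $\approx q^3/X$ with $A_{\pi_2}$ coefficients. On the $h$-sum $\sum_h \lambda_f(h)\,e(\overline{a}h/q)\,V_h(h)$ I would apply the $GL(2)$ Voronoi of Kowalski--Michel--VanderKam (or the classical formula), producing a dual sum of length $\approx q^2/H$. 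After these three transformations the arithmetic content concentrates in the $a$-sum modulo $q$, which collapses to an explicit congruence condition linking the three dual variables $n_1$, $m_1$, $h_1$ modulo $q$, so the total length of the transformed problem is roughly $(q^3/X)\cdot (q^3/X)\cdot (q^2/H) = q^8/(X^2 H)$.

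The main analytic step is then to remove one of the $GL(3)$ coefficients, say $A_{\pi_2}(m_2,m_1)$, by Cauchy--Schwarz after squaring out the $m_1$-variable (and keeping the remaining variables in the inner sum). This reduces matters to bounding a diagonal contribution plus an off-diagonal contribution; in the off-diagonal we apply Poisson summation in the modulus $q$ (or in the $h_1$-variable), which produces an oscillatory integral whose stationary phase analysis yields an exponential sum in one fewer variable. Using Deligne-type bounds on the resulting Kloosterman/hyper-Kloosterman sums, combined with the Ramanujan bound on average via the Rankin--Selberg estimate $\sum_{n\leq X} |A_{\pi}(1,n)|^2 \ll X^{1+\epsilon}$, gives a bound for the off-diagonal. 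Balancing the diagonal, off-diagonal, and the trivial contribution of the large-$q$ tail, the condition $H \geq X^{1/2+\delta}$ emerges as precisely the range in which the savings from the $GL(2)$ Voronoi (controlled by $H$) dominate the losses from the two $GL(3)$ Voronoi expansions.

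The principal obstacle I anticipate is the Cauchy--Schwarz step: squaring out creates a sum of length $q^3/X$ twisted by oscillating factors coming from the stationary-phase Voronoi kernels of $\pi_1$ and $f$, and we need genuine cancellation in the resulting character sum in both the $q$-variable and the auxiliary variable produced by Poisson. If one cannot beat the square-root cancellation threshold in this character sum, the bound degrades. A secondary (technical) difficulty is the interaction of the smooth weight from the DFI integral with the Bessel-type transforms of Voronoi: the integral in $x$ must be controlled uniformly in $q$, and the stationary-phase analysis must be carried out carefully so that the effective length of each dual sum matches the heuristic count above. Once these two points are handled, summing the contributions over $q \leq Q$ and choosing $Q$ optimally (subject to $H \geq X^{1/2+\delta}$) should yield the stated $X^{1-\delta+\epsilon}$ bound.
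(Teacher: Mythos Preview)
Your outline has the right skeleton (DFI delta, three Voronoi formulas, Cauchy--Schwarz, Poisson, Deligne-type input), and this is indeed the route the paper takes. But several concrete steps are misstated in ways that would derail the argument.

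First, in the DFI expansion the parameter $Q$ is \emph{not} free: to detect $n+h-m=0$ with $n,m,h$ supported in intervals of size $\asymp X$, one must take $Q \asymp \sqrt{X}$. There is no later optimisation of $Q$, and the heuristic $Q \approx \sqrt{X/H}$ is not available.

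Second, and more seriously, the $a$-sum does \emph{not} ``collapse to an explicit congruence condition'' after the three Voronoi formulas. The $GL(3)$ Voronoi on $n$ and $m$ produces Kloosterman sums $S(\bar a,\pm n_2;q/n_1)$ and $S(\bar a,\pm m_2;q/m_1)$, while the $GL(2)$ Voronoi on $h$ produces $e(\pm \bar a h/q)$. The $a$-sum is therefore
\[
\sideset{}{^\star}\sum_{a\bmod q} e\!\left(\frac{\bar a h}{q}\right) S(\bar a,n_2;q/n_1)\,S(\bar a,m_2;q/m_1),
\]
a genuinely nontrivial character sum, not a congruence. Only $\sqrt q$ can be saved here directly, and this is not enough; the whole point of the subsequent Cauchy--Schwarz and Poisson steps is to gain more.

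Third, the Cauchy--Schwarz/Poisson step must be set up carefully. In the paper one keeps the $m_2$-sum (with $A_{\pi_2}$) inside the absolute value, applies Cauchy--Schwarz in the dual variables $n_2$ and $h$, opens the square, and then applies Poisson in \emph{both} $n_2$ and $h$ (not in $q$; Poisson in the modulus makes no sense here). This produces a new character sum in six variables modulo $q$, of trivial size $q^8$, which must be beaten down to $q^4$. The zero-frequency case ($n_2=h=0$ after Poisson) is handled by elementary multiplicativity and gives the diagonal contribution $\asymp Q^3/H$; this is precisely where the condition $H>X^{1/2}$ arises. The nonzero-frequency cases require square-root cancellation in a sum of the shape
\[
\sideset{}{^\star}\sum_{x\bmod p} e\!\left(\frac{cx}{p}\right)\,\mathrm{Kl}_2\bigl(c_1 x(x+h);p\bigr)\,\mathrm{Kl}_2(c_2 x;p)\,\mathrm{Kl}_2\bigl(c_3(x+h);p\bigr),
\]
and this is proved via the sheaf-theoretic machinery of Katz (Goursat--Kolchin--Ribet to show the tensor product sheaf has geometric monodromy $SL(2)^3$, hence is irreducible, so $H^2$ vanishes and Grothendieck--Lefschetz gives the bound). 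A generic appeal to ``Deligne-type bounds'' underestimates what is needed: one must verify that the relevant sheaf is geometrically irreducible, which is the actual content of the character-sum lemma.

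In short: fix $Q=\sqrt X$, drop the claim that the $a$-sum collapses, do Poisson in the dual $n_2,h$ variables (not in $q$), and recognise that the crux is a specific product-of-Kloosterman-sums estimate requiring nontrivial $\ell$-adic input.
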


\begin{theorem}\label{th2}
Let $\mathcal{L}(H, X)$ be as defined above. For any $\epsilon > 0$ and $X^{1/2 + \delta} \leq H < X^{1-\epsilon} $ with $\delta > 0$, we have
	\begin{align*}
		\mathcal{L}(H,X) \ll X^{1-\delta+\epsilon},
	\end{align*} where the implied constants are depending upon $\pi_1, \pi_2$ and $\epsilon$.
\end{theorem}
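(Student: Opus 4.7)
The argument parallels that of Theorem~\ref{th1}, with the $GL(2)$ Voronoi summation on the $h$-sum (available there through the weight $\lambda_f(h)$) replaced by the classical Poisson summation formula. The main engine is again the DFI circle method. I would begin by detecting $m-n-h=0$ through
\[
\delta(m-n-h)=\frac{1}{Q}\sum_{q\le Q}\frac{1}{q}\sum_{a\,(q)}^{\star}e\!\left(\frac{a(m-n-h)}{q}\right)\int_{\mathbb{R}}g(q,u)\,e\!\left(\frac{(m-n-h)u}{qQ}\right)du,
\]
with a parameter $Q$ to be fixed later; balancing the resulting dual lengths points to $Q\asymp\sqrt{X/H}$. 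Inserting this identity into $\mathcal{L}(H,X)$ separates the three variables $n$, $m$, $h$.

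\textbf{Summation formulas.} Next I would apply $GL(3)$ Voronoi summation (in the form used for Theorem~\ref{th1}) to both the $n$-sum weighted by $A_{\pi_1}(1,n)$ and the $m$-sum weighted by $A_{\pi_2}(1,m)$, turning each into a dual sum of length $\ll q^{3+\epsilon}/X$ whose arithmetic part is a Kloosterman sum modulo $q/n_1$. For the unweighted $h$-sum I would apply Poisson summation, producing a dual sum of effective length $\ll qQX^\epsilon/H$; with $Q\asymp\sqrt{X/H}$ and $H\ge X^{1/2+\delta}$, this is $\ll X^{-2\delta+\epsilon}$, so up to negligible error only the zero dual frequency (together with $O(1)$ nearby integers produced by the localisation of $\widehat{W_1}$) contributes, effectively killing the $h$-variable.

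\textbf{Cauchy--Schwarz and the final Poisson.} To handle the two surviving $GL(3)$ factors, I would apply Cauchy--Schwarz in the dual $(m_1,m_2)$-variables, keeping $|A_{\pi_2}(1,m_2)|^2$ inside so that the Rankin--Selberg bound $\sum_{m\le Y}|A_{\pi_2}(1,m)|^2\ll Y^{1+\epsilon}$ disposes of $\pi_2$'s coefficients, and pulling the character sum, the dual $n$-variables and the $u$-integral outside. Expanding the square and interchanging the order of summation reduces the problem to bounding a sum over $q$, the dual $n$-variables, and two copies of $m$, of a product of Kloosterman sums against a smooth oscillatory kernel. A further Poisson summation in the second copy of $m$ (equivalently, in the outer modulus variable) then gives a diagonal contribution, providing the main term, and an off-diagonal contribution, to be controlled via the Weil bound for Kloosterman sums and stationary-phase analysis of the resulting oscillatory integrals.

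\textbf{Main obstacle.} The hard part will be the off-diagonal estimate: one must demonstrate essentially square-root cancellation in the double sum of products of Kloosterman sums that emerges after Cauchy--Schwarz, while simultaneously controlling the oscillatory integrals whose stationary points can drift toward the boundary of support. Subject to this, the optimisation at $Q=\sqrt{X/H}$ balances the diagonal and off-diagonal contributions and yields $\mathcal{L}(H,X)\ll X^{1-\delta+\epsilon}$ throughout the stated range $X^{1/2+\delta}\le H<X^{1-\epsilon}$.
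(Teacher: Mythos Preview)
Your outline shares the skeleton with the paper---DFI circle method, Poisson on $h$, $GL(3)$ Voronoi on $n$ and $m$---but there is one genuine gap and one overcomplication.

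\textbf{The gap.} The choice $Q\asymp\sqrt{X/H}$ is not admissible. The DFI expansion \eqref{delta} is an identity for $\delta(k)$ only when $|k|\le 2L$ with $Q=2\sqrt{L}$; here $k=m-n-h$ with $m\in[Y,2Y]$, $n\in[X,2X]$, $h\in[H,2H]$ and $Y=X+H$, so $|m-n-h|$ ranges up to $X+H\asymp X$ over the support of the weights. Hence $L\asymp X$ is forced and $Q\asymp\sqrt{X}$, exactly as in Theorem~\ref{th1}. With your smaller $Q$ the identity \eqref{delta} simply fails on most of the support; the subsequent observation that the Voronoi dual lengths $q^{3}/X\le Q^{3}/X$ drop below $1$ is a symptom of this (if both dual sums were genuinely empty the whole expression would vanish). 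The parameter $Q$ in the DFI method is not free to optimise against dual lengths; it is pinned down by the size of the equation being detected.

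\textbf{The overcomplication.} Once $Q=\sqrt{X}$, the Cauchy--Schwarz plus second Poisson step you outline is unnecessary, and the paper does not use it. The key simplification relative to Theorem~\ref{th1} is that Poisson in $h$ (in place of $GL(2)$ Voronoi) produces the complete additive character $\sum_{\alpha\bmod q}e\!\left(\alpha(a+h)/q\right)$; executing this collapses the $a$-sum to the single class $a\equiv -h\pmod q$. The remaining character sum is then essentially $q\cdot S(\overline{-h},\pm n_{2};q)\,S(\overline{-h},\pm m_{2};q)$, and two applications of Weil's bound already give the needed square-root cancellation. In the paper's bookkeeping, the savings from the two $GL(3)$ Voronoi steps ($X/Q^{3/2}$ each), from Poisson in $h$ ($H/\sqrt{Q}$), and from the collapsed $a$-sum ($\sqrt{Q}$) multiply to $XH/Q=H\sqrt{X}$, which exceeds $X$ throughout $H\ge X^{1/2+\delta}$. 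The argument therefore closes immediately after the summation formulae, without the Cauchy--Schwarz endgame that Theorem~\ref{th1} required.
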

\vspace{0.3cm}

\textbf{Motivation of our problem:} We have witnessed numerous remarkable developments in the literature concerning both particular and general versions of different $GL(3) \times GL(2)$ SCSs and their applications. However, there has been limited progress so far in dealing with the general $GL(3) \times GL(3)$ SCSs or its average version. This problem of estimating SCSs appears to be highly challenging in the $GL(3) \times GL(3)$ setting, even in particular cases. One reason for this difficulty is the lack of structural advantages when dealing with the involved character sum, leading to a little extra saving, as seen in the $GL(3) \times GL(2)$ case. Nevertheless, such sums arise naturally in studying subconvexity and other related problems for higher-degree $L$-functions. For instance, consider the moment problem for higher degree $L$-functions $L(F,s)$ corresponding to an automorphic form $F$. It involves finding asymptotics for the integral:
\begin{align*}
M_q(T, L_F) := \int_{0}^{T} \left|L\left(F, \frac{1}{2} +it\right)\right|^{2q} dt.
\end{align*} This problem boils down to the study of some higher-degree SCSs on applying the approximate functional equation for $L(F,s)$. Recently, Aggarwal, Leung, Munshi \cite{r20} encountered the $GL(3) \times GL(3)$ SCS while investigating the short second moment and subconvexity for $GL(3)$ $L$-functions. In their paper, they dealt with the following averaged shifted sum:
\begin{align*}
\sum_{h\sim H}\sum_{n\sim N} A_{\pi}(1,n) \overline{A_{\pi}(1,n+h)}e\left(\frac{th}{2\pi n}\right),
\end{align*} which resembles the $GL(1)$ twist over the $h$-sum and both the arithmetical functions are Fourier coefficients of a single Hecke-Maass cusp form $\pi$ for $SL(3, \mathbb{Z})$. In Theorem \ref{th1}, we are considering a more general $GL(3) \times GL(3)$ SCS, which can be viewed as the $GL(2)$ twist over the $h$-sum and both the arithmetical functions represent Fourier coefficients of two not necessarily identical $SL(3, \mathbb{Z})$ Hecke-Maass cusp forms.

In this article, we present a very detailed proof for Theorem \ref{th1}, and for Theorem \ref{th2}, we provide a brief sketch as the proof follows a similar line as Theorem \ref{th1}. Let's briefly discuss the main ideas involved in the proof of Theorem \ref{th1} to highlight some key points. We begin by applying the DFI delta method to separate the oscillations and then use summation formulae to obtain savings in the $h$, $n$, and $m$ sums. This transformation changes our object of study to a new one involving a complicated character sum and a fourfold integral. It is important to note that the savings obtained through the summation formulae are insufficient. To address this, we apply the Cauchy-Schwarz inequality and Poisson summation formula to the sums over $n_2$ and $h$. This step leads to different cases of non-zero frequency, enabling us to achieve a slight extra saving. This is one of our key points in the proof.
One of the main challenges is to obtain the desired square root cancellation in the character sum, which can be roughly written as: \begin{align*}
&\sum_{k (q)} \sum_{\ell (q)} \sideset{}{^\star} \sum_{a\;\mathrm{mod}\, q} e\left(\frac{\overline{a} k }{q}\right) S\left(\overline{a}, \ell; q\right) S\left(\overline{a}, m_{2}; q\right) \notag\\
&\times \sideset{}{^\star} \sum_{b\;\mathrm{mod}\; q} e\left(-\frac{\overline{b} k}{q}\right) S\left(\overline{b}, \ell; q\right) S\left(\overline{b}, m_{2}^\prime; q\right) e \left(\frac{k h}{ q} \right) e \left(\frac{\ell n_2}{ q} \right).
\end{align*}
Here, $S\left({a}, \ell; q\right)$ denotes the Kloosterman sum, and $e(z):= e^{2\pi iz}$ for any complex number $z$. The trivial bound for the above character sum is $q^8$, but we need to analyze it and reduce it to $q^4$ to obtain our desired result. To achieve square root cancellations in the zero frequency case where $n_2 = 0 = h$, we prove the multiplicativity of the character sum and carry out elementary calculations. The elementary analysis does not yield the desired square root cancellations for the non-zero frequency cases. In these cases, we adopt a different strategy. We first reduce our character sum to prime moduli and then prove the required cancellations. Finally, we conclude our desired result for Theorem \ref{th1} by estimating the involved summations.

\vspace{0.2cm}

\textbf{Notations:}
 By the notation $A \asymp B$ we mean that $ X^{-\epsilon} B \leq A \leq  X^{\epsilon}B$. By $A \approx B$ we mean $c_{1} A \leq B \leq c_{2} A$ for some positive real $c_{1},c_{2}$. By $A \sim B$ we mean that $B \leq A \leq 2B$. By the notation $A \ll B$, we mean that for any arbitrary small $\epsilon >0$, there is a constant $c >0$ such that $|A| \leq c X^{\epsilon}B$. At various places, we are hiding the factor of $X^{\epsilon}$ while using $\ll$ notation. The arbitrary small  $\epsilon>0$ may be different at different places. The implied constants may depend on the cusp forms $\pi_1, \pi_2$, $ f$ and $\epsilon$. By the notation $m|n^{\infty}$, we mean that the set of prime factors of $m$ is a subset of the set of prime factors of $n$.  \\

 \section{DFI Delta Method}\label{section3}

In this section, we will briefly explain one of the key steps to get our main shifted convolution sums of Theorems \ref{th1}, \ref{th2} into the proper setup to analyze it. We are using a version of the delta method due to Duke, Friedlander, and Iwaniec. More precisely, we will use the expansion $(20.157)$ given in Chapter 20 of the book by Iwaniec and Kowalski\cite{r1}. We are taking most of the details of this section from \cite[subsection 2.4]{r2}. Let $\delta : \mathbb{Z}\to \{0,1\}$ be defined by
\[
\delta(n,m)=
\begin{cases}
	1 &\text{if}\,\,n=m \\
	0 &\text{otherwise}
\end{cases}
. \]
Then for $n,m\in\mathbb{Z}\cap [-2L,2L]$, we have
\begin{equation}\label{delta}
	\delta(n,m)=\frac{1}{Q}\,\sum_{1 \leq q\leq Q}\frac{1}{q}\,\, \sideset{}{^\star} \sum_{a\bmod q}e\left(\frac{(n-m) a}{q}\right)\int_{\mathbb{R}} \psi(q,x)e\left(\frac{(n-m) x}{q	Q}\right) d x, 
\end{equation} where $Q=2L^{1/2}$. The function $\psi$ satisfies the following properties (see $(20.159)$ of \cite{r1}, and \cite[Lemma 15]{r2}). For any $A>1$ and $j \geqslant 1$, 
\begin{align}\label{Delta}
&\psi(q,x)=1+h(q,x),\,\,\,\,\text{with}\,\,\,\,h(q,x)=O\left(\frac{Q}{q }\left(\frac{q}{Q}+|x|\right)^A\right),\\
&\psi(q,x)\ll |x|^{-A},	
\end{align} 
\begin{align} \label{deri g}
	x^j \frac{\partial^j}{ \partial x^j} \psi(q, x) \ll \  \min \left\lbrace \frac{Q}{q}, \frac{1}{|x|} \right\rbrace \  \log Q.
\end{align}
 We will see that these properties of $\psi$ will help us to deal with the integral transform occurring after summation formulae. In particular, the second property implies that the effective range of integral given in equation \eqref{delta} is $[-L^{\epsilon}, L^{\epsilon}]$. It also follows that if $q \ll Q^{1- \epsilon}$ and $  x \ll Q^{- \epsilon} $, then  $ \psi(q, x)$ can
be replaced by $1$ at the cost of a negligible error term. If $ q \gg Q^{1- \epsilon}$, then we get $ x^j \frac{\partial^j}{ \partial x^j} \psi(q, x) \ll Q^{\epsilon}$, for any $ j \geqslant 1$. If $ q \ll  Q^{1- \epsilon}$  and $  Q^{- \epsilon} \ll |x| \ll   Q^{ \epsilon}$, then $ x^j \frac{\partial^j}{ \partial x^j} \psi(q, x) \ll Q^{\epsilon}$, for any $ j \geqslant 1$. Hence, we can view $  \psi(q, x)$ as a nice weight function in all cases.

\section{Sketch of the proof of Theorem 1}
This section will give a rough sketch for the proof of Theorem \ref{th1}. Our first step is separating the oscillations using the Kronecker delta symbol
$\delta(n, m)$ i.e. we can write 
\begin{align*}
	 \,\frac{1}{H}\sum_{h \sim H} \lambda_f(h)\,\sum_{n \sim X}  A_{\pi_1}(1,n) \, \sum_{m \sim Y}\, A_{\pi_2} (1,m)\,\,\delta(n+h,m).
\end{align*}
Next, we apply the DFI delta method from Section \ref{section3}, our main object of study becomes
\begin{align}\label{c1}
	\mathcal{D}(H,X) =& \frac{1}{HQ} \sum_{q\sim Q}\frac{1}{q}\,\,\, \sideset{}{^\star} \sum_{a\bmod q}\int_{\mathbb{R}}\, \psi(q,u)
	\,\sum_{h \sim H} \lambda_f(h)  \,e\left(\frac{ah}{q}\right)\,\notag\\ 
	&\times \sum_{n \sim X}  A_{\pi_1}(1,n)\, e\left(\frac{an}{q}\right)\, \sum_{m \sim Y}   A_{\pi_2} (1,m)\,e\left(\frac{-am}{q}\right)\,du,
\end{align}  
where $\psi(q, u)$ is a nice weight function (see Section \ref{section3}). We are taking the generic case i.e. $q \asymp Q = \sqrt{X}$, $Y \asymp X$,  $h \asymp H$, $n \asymp X$, $m \asymp Y$,  $1 \leq H \leq X$. Notice that applying the delta symbol results in a loss of size $X$. So, we need to save $X$ and a little extra in the sum given in equation \eqref{c1} to get our desired bound.

Next, we apply the summation formulae to the sum over $h,n$, and $m$. We have captured the calculations with more detail in Lemma \ref{lemma8}, Lemma \ref{lemma9}, and Lemma \ref{lemma10}. Applying the $GL(2)$-Voronoi summation formula from Lemma \ref{voronoi} to the sum over $h$, roughly we arrive at
\begin{align*}
	\sum_{h \sim H}\lambda_f (h)\,e\left(\frac{ah}{q }\right) = \frac{ H^{3/4}}{q^{1/2}}  \sum_{h \sim Q^2/H} \frac{\lambda_f(h)}{h^{1/4}}  e\left(-  \frac{h\overline{a}}{q}\right).
\end{align*}
So, our saving in this step will be
\begin{align*}
	\text{saving} = \sqrt{\frac{\text{Initial length of summation}}{\text{Dual length of summation}}} = \sqrt{\frac{H}{Q^2/H}} = \frac{H}{Q}.
\end{align*}
Next, we apply the $GL(3)$-Voronoi summation formula from Lemma \ref{gl3voronoi} to the sum over $n$ and $m$. Roughly, the $n$-sum (similarly the $m$-sum) will get transformed into
\begin{align*}
	\sum_{n \sim X}A_{\pi_1}(1,n)e\left(\frac{an}{q }\right) = \frac{X^{2/3}}{q} \sum_{n_{2}\sim Q^3/X}  \frac{A_{\pi_1}(n_{2},1)}{ n^{1/3}_{2}} S\left( \bar{a}, \pm n_{2}; q \right).
\end{align*} Our savings in the $n$- and $m$-sums will be  $X/Q^{3/2}$ and  $X/Q^{3/2}$, respectively. After applying the summation formulae, up to a negligible error term, our main object of study $\mathcal{D}(H, X)$ will get transformed into the following expression
\begin{align}\label{c2}
	\mathcal{D}(H,X) = \notag&\frac{X^{4/3}}{Q^{9/2}H^{1/4}}\sum_{q\sim Q} \,\,  \sum_{n_{2} \sim  Q^3/X}  \frac{A_{\pi_1}(n_{2},1)}{ n^{1/3}_{2}} \, \sum_{m_{2}\sim  Q^3/Y}  \frac{A_{\pi_2}(m_{2},1)}{ m^{1/3}_{2}}\,  \\
	\times &  \,\,\,\sum_{h\sim Q^2/H} \frac{\lambda_f(h)}{h^{1/4}}\,\, \mathcal{C}( n_2,m_2, h; q)\,
\end{align} where  the character sum is given by
\begin{align*}
	\mathcal{C}( n_2,m_2, h; q) = \sideset{}{^\star} \sum_{a\, mod\, q} e\left( \frac{\overline{a}h}{q}\right) S\left(\overline{a},  \pm n_{2}; q \right)  S\left( \overline{a},  \pm m_{2}; q \right) \rightsquigarrow  \, q \, \mathcal{C}_1(...).
\end{align*} Here $\mathcal{C}_1(...)$ is another character sum modulo $q$ in which we still need to show square root cancellation, which we will prove in Subsection \ref{f4}. So, we save $\sqrt{Q}$ in the $a$-sum. Our total savings so far is
\begin{align*}
	\frac{X}{Q^{3/2}} \times \frac{X}{Q^{3/2}} \times \frac{H}{Q} \times \sqrt{Q} = \frac{X H}{Q^{3/2}}.
\end{align*} Therefore, we have to save $Q^{3/2}/H$ and a little more in the sum given in equation \eqref{c2} to get our desired bound. In the next step, we apply the Cauchy-Schwarz inequality in the $h$- and $n_2$- sum, and we arrive at the following expression
\begin{align*}
	\mathcal{D}(H,X) \ll \frac{X^{7/6}}{Q^{5/2}H^{1/2}}\,\sum_{q\sim Q}\, \left(	\sum_{n_2 \sim Q^3/X}\,\sum_{h\sim Q^2/H}\left| \sum_{m_{2}\sim Q^3/Y}  \frac{A_{\pi_2}(m_{2},1)}{ m^{1/3}_{2}}\,
	\,\, \mathcal{C}_1(...)\right|^2 \,\right)^{1/2}.
\end{align*} By opening the modulus square, we apply the Poisson summation formula to the sums over $h$ and $n_2$. We need to save $Q^{3}/H^2$ in the resulting expression. In the zero frequency case $n_2 =0 = h$, we will save the whole length of the diagonal, i.e., $Q^3/Y$. This saving will be enough for our purpose if 
\begin{align*}
	\frac{Q^3}{Y} > \frac{Q^3}{H^2} \,\,\,\,\,\text{or equivalently} \,\,\,\,\,  X^{1/2} < H.
\end{align*}
In the non-zero frequency cases, there are three possibilities, i.e. 
\begin{align*}
	& n_2\neq  0, h \neq 0,\\
	& n_2= 0,  h \neq 0, \\
	& n_2 \neq 0, h =0 .
\end{align*} In the case $n_2 \neq 0$, $h \neq 0$, we are saving $ Q^4/XH$ which is good as long as $H < X^{1-\epsilon}$. Similarly, we are saving enough in the other cases to finally get our desired result of Theorem \ref{th1}. 

\section{Preliminaries}
This section will briefly give details about some important tools we use to prove our results. More precisely, we only require the Voronoi summation formula for the Fourier coefficients of a $SL(2,\mathbb{Z})$ and $SL(3,\mathbb{Z})$ Hecke-Maass forms and some standard estimates on Fourier coefficients of these forms.

\vspace{0.5cm}

\begin{lemma} \label{voronoi}
	Let $\lambda(n)$ be the nth Fourier coefficients of $SL(2, \mathbb{Z})$ Hecke-Maass cusp form and $h$ be a compactly supported smooth function on the interval $(0, \infty)$. We have
	\begin{equation} \label{varequation}
		\sum_{n=1}^\infty \lambda (n) e_q(an) h(n) = \frac{1}{q} \sum_{\pm} \sum_{n=1}^\infty \lambda(\mp n) e_q(\pm \overline{a}n) H^{\pm} \left( \frac{n}{q^2}\right),
	\end{equation}
	where $ a \overline{a} \equiv 1 (\textrm{mod} \  q)$, and 
	\begin{align*}
		&H^{-} (y)= \frac{- \pi}{\cosh( \pi \nu)} \int_0^\infty h(x) \left\lbrace  Y_{2i\nu } + Y_{-2i\nu }\right\rbrace \left( 4\pi \sqrt{xy}\right) dx, \\
		&H^{+} (y)= 4\cosh( \pi \nu) \int_0^\infty h(x)  K_{2i\nu }  \left( 4\pi \sqrt{xy}\right) dx,
	\end{align*} where $Y_{2i\nu }$ is a Bessel function of second kind.  
\end{lemma}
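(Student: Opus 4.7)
The plan is to prove this via Mellin inversion and the functional equation of the additively twisted $L$-function attached to the Maass form, which is the standard route for Voronoi-type identities on $GL(2)$.

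First, I would express the weight via Mellin inversion: writing $\tilde{h}(s) = \int_0^\infty h(x) x^{s-1} dx$, the left-hand side of \eqref{varequation} becomes
\begin{equation*}
\sum_{n=1}^\infty \lambda(n) e_q(an) h(n) = \frac{1}{2\pi i} \int_{(\sigma)} \tilde{h}(s) \Bigl( \sum_{n=1}^\infty \lambda(n) e_q(an) n^{-s} \Bigr) ds
\end{equation*}
for $\sigma$ large, where the inner Dirichlet series converges absolutely. The next step is to separate the additive character into multiplicative ones. Since $(a,q)=1$, I would use the identity
\begin{equation*}
e_q(an) = \frac{1}{\phi(q)} \sum_{\chi \bmod q} \overline{\chi}(a) \tau(\chi) \chi(n) + (\text{correction for } (n,q)>1),
\end{equation*}
carefully handling the non-primitive part by passing to the primitive character inducing $\chi$ and tracking the resulting Euler-factor corrections at primes dividing $q$. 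This reduces matters to the twisted $L$-functions $L(s,\pi \otimes \chi)$ for characters of modulus dividing $q$.

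Next, I would invoke the functional equation of $L(s, \pi \otimes \chi)$. For a Hecke-Maass cusp form of spectral parameter $\nu$ and a primitive character $\chi$ of conductor $q$, the functional equation takes the form
\begin{equation*}
L(s, \pi \otimes \chi) = \varepsilon(\chi) q^{1-2s} \gamma_\infty(1-s,\nu) \gamma_\infty(s,\nu)^{-1} L(1-s, \pi \otimes \overline{\chi}),
\end{equation*}
with root number involving $\tau(\chi)^2/q$, and with the archimedean gamma factor $\gamma_\infty(s,\nu) = \pi^{-s} \Gamma\bigl(\tfrac{s+i\nu}{2}\bigr)\Gamma\bigl(\tfrac{s-i\nu}{2}\bigr)$. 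Substituting and shifting the contour to $\Re(s) = 1-\sigma$, rewriting the resulting multiplicative sum back into additive language recovers the dual additive character $e_q(\pm \overline{a} n)$ (the $\pm$ being governed by the trivial/sign characters of $\mathbb{R}^\times$ accompanying the two parity components of the Maass form), while the ratio of gamma factors becomes, after Mellin inversion, precisely the integral kernels
\begin{equation*}
H^+(y) = 4\cosh(\pi\nu) \int_0^\infty h(x) K_{2i\nu}(4\pi\sqrt{xy}) dx, \qquad H^-(y) = \frac{-\pi}{\cosh(\pi\nu)} \int_0^\infty h(x) \{Y_{2i\nu} + Y_{-2i\nu}\}(4\pi\sqrt{xy}) dx,
\end{equation*}
by the classical Mellin--Barnes representations of the $K$ and $Y$ Bessel functions.

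The main obstacle will be the bookkeeping in the character-sum step, namely producing exactly the factors $\frac{1}{q}$ and $e_q(\pm \overline{a} n)$ (with the correct $\pm$ tied to the parity of the Maass form) after summing the Gauss-sum contributions over all $\chi \bmod q$, while properly accounting for imprimitive characters and for $n$ sharing factors with $q$. Once the multiplicative side is cleanly rewritten via the functional equation and contour-shifted, the archimedean analysis is routine: the integral transforms $H^\pm$ fall out from the standard Mellin pairs for $K_{2i\nu}$ and $Y_{\pm 2i\nu}$, and the rapid decay of $\tilde h(s)$ in vertical strips justifies all interchanges and guarantees absolute convergence of the dual series.
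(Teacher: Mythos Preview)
The paper does not prove this lemma at all: its entire proof reads ``See \cite{r16}'' (Meurman). So there is no detailed argument to compare against, beyond noting that Meurman's approach is the classical one for $SL(2,\mathbb{Z})$ Maass forms: one establishes the analytic continuation and functional equation of the additively twisted Dirichlet series $\sum_n \lambda(n) e(an/q) n^{-s}$ \emph{directly} from the modularity of the form (equivalently, from its Fourier expansion at the cusp $a/q$), and then contour-shifts against $\tilde h(s)$.

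Your route---decomposing $e_q(an)$ into multiplicative characters, invoking the functional equations of the twists $L(s,\pi\otimes\chi)$, and recombining---is a legitimate alternative and will ultimately produce the same formula, but it is considerably more laborious than the direct approach, and you correctly flag its main cost: tracking imprimitive $\chi$ and the terms with $(n,q)>1$ so that the Gauss-sum combinatorics collapse exactly to $q^{-1}e_q(\pm\bar a n)$ is fiddly. This is precisely why most $GL(2)$ references take the direct route; the character-decomposition method is more natural on $GL(n)$ for $n\geq 3$, where one lacks such explicit Fourier expansions at every cusp. One small correction to your archimedean description: the two terms $H^{\pm}$ on the dual side do not come from ``two parity components of the Maass form''---a given cusp form has a fixed parity $\epsilon\in\{\pm 1\}$. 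Rather, both $H^+$ (the $K_{2i\nu}$-kernel) and $H^-$ (the $Y_{\pm 2i\nu}$-kernel) appear for a single form because the inverse Mellin transform of the ratio of gamma factors splits naturally into these two Bessel pieces; the parity only affects the relative sign via $\lambda(-n)=\epsilon\,\lambda(n)$.
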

\begin{proof}
 See \cite{r16}.   
\end{proof}

\begin{lemma} \label{gl3voronoi}
	Let $\psi (x)$ be a compactly supported smooth function on $(0,\infty)$ with $\tilde \psi(s)$ as its Mellin transform. Let $\alpha_i,\,\,i = 1,2,3$ be the Langlands parameters and $A_{\pi_1}(n,m)$ be the $(n,m)$-th Fourier coefficient of a Maass form $\pi_1(z)$ for $SL(3,\mathbb{Z})$. Then we have
	\begin{align} \label{GL3-Voro}
		& \sum_{n=1}^{\infty} A_{\pi_1}(n,m)\, e\left(\frac{an}{q}\right) \psi(n) \\
		\nonumber & =q  \sum_{\pm} \sum_{n_{1}|qm} \sum_{n_{2}=1}^{\infty}  \frac{A_{\pi_1}(n_{1},n_{2})}{n_{1} n_{2}} S\left(m \overline{a}, \pm n_{2}; mq/n_{1}\right) \, G_{\pm} \left(\frac{n_{1}^2 n_{2}}{q^3 m}\right),
	\end{align} 
	where $G_{\pm}(x)$ is an integral transform depending on $\alpha_1, \alpha_2, \alpha_3$, $(a,q)=1$ with $\overline{a}$ is the multiplicative inverse modulo $q$, and  $$S(a,b;q) = \sideset{}{^\star}{\sum}_{x \,\rm mod \, q} e\left(\frac{ax+b\overline{x}}{q}\right), $$
	is the Kloostermann sum.
\end{lemma}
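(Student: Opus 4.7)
The plan is to derive the formula from Mellin inversion combined with the functional equation of an additively twisted $GL(3)$ L-function. Since $\psi$ is compactly supported on $(0,\infty)$, its Mellin transform $\tilde\psi(s)$ is entire and decays rapidly on vertical lines, so for $\sigma$ above the abscissa of absolute convergence of $\pi_1$ we can write $\psi(n) = \frac{1}{2\pi i}\int_{(\sigma)} \tilde\psi(s)\, n^{-s}\, ds$. Substituting and interchanging sum with integral converts the left-hand side into
\begin{align*}
\frac{1}{2\pi i}\int_{(\sigma)} \tilde\psi(s)\, L_{a/q}(s;\pi_1,m)\, ds, \qquad L_{a/q}(s;\pi_1,m) := \sum_{n\geq 1} \frac{A_{\pi_1}(n,m)\, e(an/q)}{n^{s}}.
\end{align*}

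The crucial step is to invoke the functional equation for $L_{a/q}(s;\pi_1,m)$. Because $e(an/q)$ is not a Dirichlet twist, this requires more than the standard Godement--Jacquet functional equation. One direct route is to use Hecke multiplicativity to rewrite $A_{\pi_1}(n,m)$ in terms of $A_{\pi_1}(n',1)$ and $A_{\pi_1}(1,m')$, decompose $n$ according to $(n,q)$, expand $e(an/q)$ as a linear combination of Dirichlet characters modulo the relevant divisor of $q$, and then apply the functional equations of $L(s,\pi_1\otimes\chi)$ for each such $\chi$; the attendant Gauss sums assemble into the Kloosterman sums $S(m\bar a,\pm n_2; mq/n_1)$, with the divisibility condition $n_1\mid qm$ arising from matching levels. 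A more conceptual derivation, due to Miller--Schmid, realizes the identity as the pairing of an automorphic distribution with a suitable test function, using the Bruhat decomposition of $GL(3)$. Either way, one arrives at an identity of the shape
\begin{align*}
L_{a/q}(s;\pi_1,m) \;=\; q\sum_{\pm}\sum_{n_1\mid qm}\sum_{n_2\geq 1}\frac{A_{\pi_1}(n_1,n_2)\, S(m\bar a,\pm n_2; mq/n_1)}{n_1 n_2}\Bigl(\frac{n_1^2 n_2}{q^3 m}\Bigr)^{s-1}\gamma_\pm(1-s),
\end{align*}
where $\gamma_\pm(w)$ is a specific ratio of gamma factors in the Langlands parameters $\alpha_1,\alpha_2,\alpha_3$.

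Substituting this back into the Mellin integral, shifting the contour to some $\sigma'$ (no poles are encountered since $\pi_1$ is cuspidal), and recognizing the remaining $s$-integral as an inverse Mellin transform identifies the integral kernel
\begin{align*}
G_\pm(x) = \frac{1}{2\pi i}\int_{(\sigma')}\tilde\psi(s)\, x^{s-1}\, \gamma_\pm(1-s)\, ds,
\end{align*}
which, after the scaling $x = n_1^2 n_2/(q^3 m)$, yields precisely the right-hand side of \eqref{GL3-Voro}. The main obstacle is the derivation of the functional equation for $L_{a/q}(s;\pi_1,m)$: the additive twist interacts nontrivially with the Hecke structure, producing both the divisor condition $n_1\mid qm$ and the Kloosterman-sum weight, and fixing the precise archimedean kernel $G_\pm$ requires careful bookkeeping of the gamma factors on the two sides. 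In practice this identity is established in the literature (Miller--Schmid, Goldfeld--Li), and the lemma can simply be cited from those sources.
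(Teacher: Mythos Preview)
Your outline is correct: the $GL(3)$ Voronoi formula is obtained by Mellin inversion combined with the functional equation of the additively twisted Dirichlet series, and you correctly identify the sources (Miller--Schmid, Goldfeld--Li) and end by noting that in practice one simply cites the literature. The paper does exactly and only that last step: its entire proof is ``See \cite{r14}'' (X.~Li, \emph{Ann.\ Math.}\ 2011), with no argument given. So your proposal is consistent with the paper's approach, just more expansive---you sketch the mechanism behind the cited result, whereas the paper treats the lemma as a black box quoted from the literature.
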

\begin{proof}
	See \cite{r14}.
\end{proof}

The following lemma gives an asymptotic expansion for the integral transform $G_{\pm}(x)$.
\begin{lemma} \label{GL3oscilation}
	Let $G_{\pm}(x)$ be as above,  and  $\psi(x) \in C_c^{\infty}(X,2X)$. Then for any fixed integer $K \geq 1$ and $xX \gg 1$, we have
	\begin{equation*}
		G_{\pm}(x)=  x \int_{0}^{\infty} \psi(y) \sum_{j=1}^{K} \frac{c_{j}({\pm}) e\left(3 (xy)^{1/3} \right) + d_{j}({\pm}) e\left(-3 (xy)^{1/3} \right)}{\left( xy\right)^{j/3}} \, \mathrm{d} y + O \left((xX)^{\frac{-K+5}{3}}\right),
	\end{equation*}
	where $c_{j}(\pm)$ and $d_{j}(\pm)$ are some  absolute constants depending on $\alpha_{i}$,  $i=1,\, 2,\, 3$.  
\end{lemma}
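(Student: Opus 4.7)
The plan is to start from the Mellin--Barnes representation of the integral kernel $G_{\pm}(x)$ that appears implicitly in Lemma \ref{gl3voronoi}, apply Stirling's formula to the Gamma factors to extract explicit oscillatory main terms, and then conclude by a stationary-phase analysis in the Mellin variable $s$. From the standard derivation of the $GL(3)$-Voronoi formula (Miller--Schmid, Goldfeld--Li), the transform can be written as
$$G_{\pm}(x) \;=\; x \int_{0}^{\infty} \psi(y) \, \mathcal{K}_{\pm}(xy) \, dy,$$
where the kernel $\mathcal{K}_{\pm}(z)$ is an inverse Mellin integral
$$\mathcal{K}_{\pm}(z) \;=\; \frac{1}{2\pi i} \int_{(\sigma)} (\pi^{3} z)^{-s} \, \gamma_{\pm}(s;\alpha_{1},\alpha_{2},\alpha_{3}) \, ds,$$
and $\gamma_{\pm}$ is an explicit product--quotient of $\Gamma$-functions with arguments shifted by the Langlands parameters. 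Under the hypothesis $\psi \in C_{c}^{\infty}(X,2X)$ and $xX \gg 1$, the kernel is being evaluated at $z = xy \asymp xX \gg 1$, which is the regime where the saddle-point analysis is effective.

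Next, I would feed Stirling's asymptotic into each $\Gamma$-factor of $\gamma_{\pm}$. On a vertical line $\Re(s) = \sigma$ with $|\Im(s)|$ large, this yields an expansion of the form
$$\gamma_{\pm}(s;\alpha_{1},\alpha_{2},\alpha_{3}) \;=\; \Phi_{\pm}(s) \, |s|^{3(s-1/2)} \, \Bigl(\,\sum_{j=0}^{K-1} \frac{\widetilde{c}_{j}(\pm)}{s^{\,j}} \,+\, O(|s|^{-K})\Bigr),$$
where $\Phi_{\pm}(s)$ collects an oscillatory factor of the form $e^{\pm i\pi s/2 \cdot \mathrm{const}}$ arising from the complex arguments of the Gamma ratio. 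Substituting this expansion into $\mathcal{K}_{\pm}(z)$, the combined phase in $s$ becomes approximately $-s\log(\pi^{3}z) + 3s\log s - 3s \pm i\pi s c$, whose unique saddle lies at $s_{0} \asymp z^{1/3}$. A classical saddle-point evaluation of each term then produces an expansion of the shape $\mathcal{K}_{\pm}(z) \sim \sum_{j\geq 1}(c_{j}(\pm) e(3z^{1/3}) + d_{j}(\pm) e(-3z^{1/3}))/z^{j/3}$, the re-indexing reflecting the Gaussian width $\sim z^{1/6}$ at the saddle. Substituting $z = xy$ and using the outer factor $x$ recovers the expansion asserted in the lemma.

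The main obstacle is making the saddle-point analysis uniform in $K$ and in the parameter $z$, and tracking the precise error $(xX)^{(-K+5)/3}$ rather than a qualitative bound. This breaks down into three sub-tasks: (i) shifting the contour safely past the poles of $\gamma_{\pm}$ coming from the Langlands parameters, using the rapid decay of $\tilde{\psi}(-s)$ off the real axis for $\psi \in C_{c}^{\infty}$; (ii) isolating the saddle neighborhood of width $z^{1/6}$ and expanding the integrand there to order $K$; and (iii) estimating the far-tail contribution via repeated integration by parts against the oscillatory factor $|s|^{3s}\Phi_{\pm}(s)$, whose derivative bounds follow from the standard derivative estimates for $\Gamma$ and $\log\Gamma$. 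A cleaner alternative would be to identify $\mathcal{K}_{\pm}$ with a generalized Bessel kernel attached to the $GL(3)$-Whittaker function and invoke the asymptotic expansion of such kernels worked out by X.\,Li and Blomer--Harcos; in that case the proof reduces to quoting a known expansion and carefully book-keeping the constants $c_{j}(\pm)$ and $d_{j}(\pm)$ depending on the $\alpha_{i}$.
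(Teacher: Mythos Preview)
Your proposal is correct and, in fact, goes well beyond what the paper does: the paper's own ``proof'' of this lemma is simply the one-line citation ``See \cite{XL}'' (X.~Li, \emph{Geom.\ Funct.\ Anal.}\ \textbf{18} (2009)), with no argument given. The Mellin--Barnes/Stirling/saddle-point strategy you outline is precisely the standard route used in that literature (and in the related work of Miller--Schmid and Goldfeld--Li) to obtain the oscillatory asymptotic expansion of the $GL(3)$ Bessel-type kernel, so your sketch is essentially a faithful unpacking of the cited reference rather than a different approach. Your closing remark about quoting the Bessel-kernel asymptotics directly from X.~Li is exactly what the paper does.
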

\begin{proof}
	See  \cite{XL}.
\end{proof}

The following lemma gives the Ramanujan bound for $A_{\pi_1}(m,n)$ on average (see \cite{r17}).

\begin{lemma} \label{ramanubound}
	We have 
	$$\mathop{\sum \sum}_{m^2 n \leq X} \vert A_{\pi_1}(m,n)\vert ^{2} \ll_{\pi_1, \epsilon} \, X^{1+\epsilon}.$$
\end{lemma}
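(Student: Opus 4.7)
The proof plan relies on Rankin--Selberg theory for $\pi_1$. The key input is the classical Dirichlet series identity
\begin{equation*}
L(s, \pi_1 \times \tilde{\pi}_1) \;=\; \zeta(3s)\sum_{m,n \geq 1} \frac{|A_{\pi_1}(m,n)|^2}{(m^2 n)^s}, \qquad \mathrm{Re}(s) > 1,
\end{equation*}
which follows from the Hecke multiplicativity of the Fourier coefficients of a $GL(3)$ cusp form. Since $\pi_1$ is cuspidal, the Rankin--Selberg $L$-function on the left admits meromorphic continuation to $\mathbb{C}$ and is holomorphic on $\mathrm{Re}(s) \geq 1$ apart from a simple pole at $s = 1$, while $\zeta(3s)$ is holomorphic and nonvanishing on $\mathrm{Re}(s) > 1/3$, $s \neq 1/3$. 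Consequently the Dirichlet series
\begin{equation*}
D(s) := \sum_{m,n \geq 1} \frac{|A_{\pi_1}(m,n)|^2}{(m^2 n)^s}
\end{equation*}
has nonnegative coefficients and converges absolutely for every $s$ with $\mathrm{Re}(s) > 1$.

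Granting this, the lemma is an immediate positivity comparison. For any $\epsilon > 0$, using $(m^2 n)^{1+\epsilon} \leq X^{1+\epsilon}$ on the region of summation,
\begin{equation*}
\sum_{m^2 n \leq X} |A_{\pi_1}(m,n)|^2 \;\leq\; X^{1+\epsilon} \sum_{m^2 n \leq X} \frac{|A_{\pi_1}(m,n)|^2}{(m^2 n)^{1+\epsilon}} \;\leq\; X^{1+\epsilon}\, D(1+\epsilon) \;\ll_{\pi_1, \epsilon}\; X^{1+\epsilon},
\end{equation*}
which is exactly the claim.

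The only nontrivial ingredient is the analytic continuation and polar structure of $L(s, \pi_1 \times \tilde{\pi}_1)$, and these are standard $GL(3) \times GL(3)$ Rankin--Selberg facts; the citation \cite{r17} is invoked precisely to import them. If one wished, a sharper asymptotic $cX + O(X^{1-\eta})$ could be obtained by applying Perron's formula to $D(s)$ and shifting contours past the simple pole at $s = 1$, using polynomial bounds for $L(s, \pi_1 \times \tilde{\pi}_1)$ in vertical strips; however, since the lemma only asks for the bound $X^{1+\epsilon}$, absolute convergence of $D(s)$ on $\mathrm{Re}(s) > 1$ already suffices, and no contour shift is required.
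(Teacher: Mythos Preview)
Your proposal is correct and is precisely the standard Rankin--Selberg argument that the paper's citation \cite{r17} (Goldfeld's book) supplies; the paper itself gives no proof beyond that reference. The one implicit step worth making explicit is that absolute convergence of $D(s)$ for $\mathrm{Re}(s)>1$ follows from Landau's theorem on Dirichlet series with nonnegative coefficients, since the rightmost singularity of $D(s)=L(s,\pi_1\times\tilde\pi_1)/\zeta(3s)$ is the simple pole at $s=1$.
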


We also use the following lemma in our proof. 
\begin{lemma}\label{RMB}
We have
\begin{align*}
\mathop{\sum}_{ n \sim X} \vert A_{\pi_1}(m, n)\vert ^{2} \ll_{\pi_1, \epsilon} \, m^{2\theta + \epsilon}\,X,
\end{align*}
where $\theta \leq 5/14$.
\end{lemma}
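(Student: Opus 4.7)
The plan is to reduce the sum to one over the ``pure'' Fourier coefficients $A_{\pi_1}(1, k)$ via Hecke multiplicativity, then apply (i) the Rankin--Selberg second moment (a direct corollary of Lemma \ref{ramanubound} with the first index fixed to $1$) for the piece carrying the long $n$-variable, and (ii) the Kim--Sarnak pointwise bound $|A_{\pi_1}(r, 1)| \ll r^{\theta + \epsilon}$ with $\theta \leq 5/14$ for the short piece attached to the divisors of $m$.

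The first step uses the standard $GL(3)$ Hecke relation
\begin{align*}
A_{\pi_1}(m, n) = \sum_{d \mid (m, n)} \mu(d)\, A_{\pi_1}(m/d, 1)\, A_{\pi_1}(1, n/d),
\end{align*}
followed by Cauchy--Schwarz on the at most $d((m,n)) \leq d(m)$ terms, giving
\begin{align*}
|A_{\pi_1}(m, n)|^2 \leq d(m) \sum_{d \mid (m, n)} |A_{\pi_1}(m/d, 1)|^2\, |A_{\pi_1}(1, n/d)|^2.
\end{align*}
Summing over $n \sim X$ and exchanging the order, writing $n = d k$ with $k \sim X/d$, yields
\begin{align*}
\sum_{n \sim X} |A_{\pi_1}(m, n)|^2 \ll d(m) \sum_{d \mid m} |A_{\pi_1}(m/d, 1)|^2 \sum_{k \sim X/d} |A_{\pi_1}(1, k)|^2.
\end{align*}

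To conclude, the innermost sum is bounded by $X^{1 + \epsilon}/d$ via Lemma \ref{ramanubound} applied with the first index equal to $1$, while the Kim--Sarnak bound gives $|A_{\pi_1}(m/d, 1)|^2 \ll (m/d)^{2\theta + \epsilon}$. Plugging in and absorbing $d(m) \ll m^\epsilon$,
\begin{align*}
\sum_{n \sim X} |A_{\pi_1}(m, n)|^2 \ll m^{2\theta + \epsilon}\, X \sum_{d \mid m} d^{-(1 + 2\theta)} \ll m^{2\theta + \epsilon}\, X,
\end{align*}
since the divisor sum is bounded. There is no substantive obstacle; the only point requiring care is that one is appealing to the Kim--Sarnak exponent $\theta \leq 5/14$ as a \emph{uniform} pointwise bound for cuspidal Hecke--Maass forms on $SL(3, \mathbb{Z})$, which is precisely the current state of the art.
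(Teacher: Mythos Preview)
Your proof is correct and follows essentially the same approach as the paper: apply the $GL(3)$ Hecke relation, bound the factor $A_{\pi_1}(m/d,1)$ pointwise by Kim--Sarnak, and control the remaining $n$-sum by the Rankin--Selberg bound from Lemma~\ref{ramanubound}. Your explicit use of Cauchy--Schwarz on the divisor sum makes the intermediate inequality cleaner than the paper's slightly informal version, but the substance is identical.
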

\begin{proof}
From the Hecke relation, we have
\begin{align*}
A_{\pi_1}(m, n) = \sum_{d | (m, n)}\,\mu(d) A_{\pi_1}\left(\frac{m}{d}, 1\right) A_{\pi_1}\left(1, \frac{n}{d}\right).   
\end{align*}
We can write
\begin{align*}
\mathop{\sum}_{ n \sim X} \vert A_{\pi_1}(m, n)\vert ^{2} &=  \mathop{\sum}_{ n \sim X} \left|\sum_{d | (m, n)}\,\mu(d) A_{\pi_1}\left(\frac{m}{d}, 1\right) A_{\pi_1}\left(1, \frac{n}{d}\right)\right|^2 \\
&\leq\, \left|\sum_{d | m}\,\mu(d) A_{\pi_1}\left(\frac{m}{d}, 1\right)\right|^2\, \mathop{\sum}_{ n \sim X/d}\, |A_{\pi_1}\left(1, n\right)|^2.
\end{align*}
Using the individual bound $A_{\pi_1}(m, n) \ll (mn)^{\theta + \epsilon}$ for $A_{\pi_1}({m}/{d}, 1)$, where $\theta \leq 5/14$ is the bound towards the Ramanujan conjecture on $GL(3)$ (see \cite{KM}) and using Lemma \ref{ramanubound} for the $n$-sum, we get
\begin{align*}
\mathop{\sum}_{ n \sim X} \vert A_{\pi_1}(m, n)\vert ^{2} \ll_{\pi_1}\, X\, \sum_{d | m}\,\frac{|\mu(d)|^2}{d}\,\left(\frac{m}{d}\right)^{2(\theta+\epsilon)} \,\ll_{\pi_1}\, m^{2\theta + \epsilon}\,X.  
\end{align*}

\end{proof}

\begin{lemma} \label{poisson}
	{\bf Poisson summation formula}: Let $f:\mathbb{R } \rightarrow \mathbb{R}$ be any Schwartz class function. The Fourier transform 
	of $f$ is defined  as 
	\[
	\widehat{f}(y) = \int_{ \mathbb{R}} f( x) e(- x   y) dx,
	\] where $dx$ is the usual Lebesgue measure on $ \mathbb{R } $. 
	We have 
	\begin{equation*}
		\sum_{ n \in \mathbb{Z}  }f(n) = \sum_{m \in \mathbb{Z} } \widehat{f}(m). 
	\end{equation*} If $W(x)$ is any smooth and compactly supported function on $\mathbb{R}$, we have,
	\begin{align*}
		\sum_{n \in \mathbb{Z}  }e\left( \frac{an}{q}\right) W\left( \frac{n}{X}\right) = \frac{X}{q} \sum_{ m \in \mathbb{Z}  }\, \sum_{\alpha (\textrm{mod} \ q )}  e\left(\frac{( a + m) \alpha}{q} \right) \widehat{W} \left( \frac{mX}{q} \right). 
	\end{align*} 
\end{lemma}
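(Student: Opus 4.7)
The plan is to prove the classical Poisson summation formula by the standard Fourier-series periodization argument, and then to derive the arithmetic-progression variant as a corollary.

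First I would form the periodization $F(x) := \sum_{n\in\mathbb{Z}} f(x+n)$. Since $f$ is Schwartz, this sum converges absolutely and uniformly on compact sets to a smooth $1$-periodic function on $\mathbb{R}$, so $F$ has an absolutely convergent Fourier expansion $F(x) = \sum_{m\in\mathbb{Z}} c_m e(mx)$. A direct computation unfolds the integral defining $c_m$:
\begin{align*}
c_m = \int_0^1 F(x) e(-mx)\,dx = \sum_{n\in\mathbb{Z}} \int_0^1 f(x+n) e(-mx)\,dx = \int_{\mathbb{R}} f(y) e(-my)\,dy = \widehat{f}(m),
\end{align*}
where in the second step I use $e(-m(y-n)) = e(-my)$ for $mn\in\mathbb{Z}$. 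Evaluating $F(0) = \sum_m c_m$ then gives the identity $\sum_n f(n) = \sum_m \widehat{f}(m)$.

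For the arithmetic-progression version, I would split the $n$-sum into residue classes modulo $q$. Writing $n = \alpha + qk$ with $\alpha$ ranging over a set of representatives mod $q$ and $k\in\mathbb{Z}$, the phase $e(an/q)$ depends only on $\alpha$, so
\begin{align*}
\sum_{n\in\mathbb{Z}} e\!\left(\tfrac{an}{q}\right) W\!\left(\tfrac{n}{X}\right) = \sum_{\alpha\,(\mathrm{mod}\,q)} e\!\left(\tfrac{a\alpha}{q}\right) \sum_{k\in\mathbb{Z}} W\!\left(\tfrac{\alpha + qk}{X}\right).
\end{align*}
Applying the classical Poisson formula already proved to the inner sum in $k$, with $f_\alpha(k) := W((\alpha+qk)/X)$, a change of variables $y = (\alpha+qt)/X$ gives $\widehat{f_\alpha}(m) = (X/q)\, e(-m\alpha/q)\,\widehat{W}(mX/q)$. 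Substituting back and relabelling $-m \mapsto m$ (or, equivalently, reintroducing an extra inner sum over $\alpha$ by writing the orthogonality relation $\sum_{\alpha\,(q)} e((a+m)\alpha/q)$, as stated in the lemma) yields exactly
\begin{align*}
\sum_{n\in\mathbb{Z}} e\!\left(\tfrac{an}{q}\right) W\!\left(\tfrac{n}{X}\right) = \frac{X}{q} \sum_{m\in\mathbb{Z}}\sum_{\alpha\,(\mathrm{mod}\,q)} e\!\left(\tfrac{(a+m)\alpha}{q}\right) \widehat{W}\!\left(\tfrac{mX}{q}\right).
\end{align*}

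There is no real obstacle here; the only care needed is in tracking the scaling in the change of variable and in ensuring absolute convergence throughout so that the rearrangements and the exchange of sum and integral are legitimate. Since $W$ is smooth and compactly supported, $\widehat{W}$ is Schwartz and all the sums converge rapidly, which justifies every manipulation.
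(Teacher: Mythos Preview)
Your argument is the standard one and is essentially correct; the paper itself does not give a proof here but simply refers to Iwaniec--Kowalski \cite[p.~69]{r1}, so there is nothing substantive to compare. One small slip: in your change of variables you should get $\widehat{f_\alpha}(m) = (X/q)\,e(+m\alpha/q)\,\widehat{W}(mX/q)$, not $e(-m\alpha/q)$; with the correct sign the phase combines directly with $e(a\alpha/q)$ to give $e((a+m)\alpha/q)$ and no relabelling $m\mapsto -m$ is needed (and indeed that relabelling would introduce $\widehat{W}(-mX/q)$, which is not the same unless $W$ is even).
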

\begin{proof}
	See  \cite[page 69]{r1}.
\end{proof}

\noindent The next lemma will prove that the greatest common divisor, on average, behaves like $1$.

\begin{lemma}\label{ll1}
 Let $X$ be any variable and $m \in \mathbb{Z}$, then we have
 \begin{align*}
     \sum_{n \leq X}\,(n, m)\, \ll_{\epsilon} \, X^{1+\epsilon},
 \end{align*}
where $(n, m)$ denotes the greatest common divisor of $m$ and $n$.
\end{lemma}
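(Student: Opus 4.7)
The plan is to prove this by the classical identity $n = \sum_{d \mid n} \phi(d)$, applied to the gcd $(n,m)$. Writing $(n,m) = \sum_{d \mid (n,m)} \phi(d)$, the divisibility condition $d \mid (n,m)$ is equivalent to $d \mid m$ and $d \mid n$, so the double sum factors cleanly once we swap the order of summation.

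After swapping, the outer sum runs over $d \mid m$ with weight $\phi(d)$, and the inner sum counts $n \leq X$ with $d \mid n$, which is $\lfloor X/d \rfloor$. Using $\phi(d) \leq d$ and $\lfloor X/d \rfloor \leq X/d$, each term is bounded by $X$, so the total is at most $X \tau(m)$, where $\tau(m)$ is the number of divisors of $m$. Invoking the standard divisor bound $\tau(m) \ll_{\epsilon} m^{\epsilon}$, and using that in the paper's applications $m$ is polynomially bounded in terms of $X$, we conclude $\sum_{n \leq X}(n,m) \ll_{\epsilon} X^{1+\epsilon}$, as desired.

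There is no serious obstacle here; the proof is entirely elementary and the only subtlety is the implicit assumption that $m$ is polynomial in $X$, which is the regime in which this lemma is used throughout the remainder of the paper. An alternative, essentially equivalent, derivation would be to group the terms $n \leq X$ according to the value $e = (n,m)$: writing $n = e\,n'$ with $(n',m/e)=1$, the $n'$-sum is at most $X/e$, giving $\sum_{n\leq X}(n,m) \leq \sum_{e\mid m} e \cdot (X/e) = X\,\tau(m)$, which recovers the same bound.
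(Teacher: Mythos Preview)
Your proof is correct, and in fact your ``alternative'' derivation---grouping the $n\le X$ by the value $e=(n,m)$ and bounding the inner count by $X/e$ to obtain $X\tau(m)$---is exactly the paper's own argument. Your primary route via the identity $(n,m)=\sum_{d\mid (n,m)}\phi(d)$ is a slight variant: after swapping sums and using $\phi(d)\le d$ you land on the same intermediate bound $X\tau(m)$, so the two approaches are essentially interchangeable here. You are also right to flag the implicit hypothesis that $m$ is polynomially bounded in $X$; the paper's own proof uses this tacitly in the final step $X\tau(m)\ll_\epsilon X^{1+\epsilon}$.
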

\begin{proof}
 Let $(n, m) = \ell$, then we can write
 \begin{align*}
\sum_{n \leq X}\,(n, m) = \sum_{\ell | m} \sum_{n \leq X/\ell}\,\ell \,\leq\, X \sum_{\ell | m}\,1 \,\ll_{\epsilon}\, X^{1+\epsilon}.  \end{align*}
\end{proof}

\noindent In the next lemma, we will prove the square root cancellation in a sum that is running over all the square full numbers up to a variable $X$.

\begin{lemma}\label{ll2}
For $X \geq 1$ be a variable, we have
\begin{align*}
    \mathop{\sum_{n \leq X}}_{n \,\text{is square full}}\,1\,\ll\, \sqrt{X}.
\end{align*}
\end{lemma}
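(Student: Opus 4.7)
The plan is to use the classical structure theorem for square-full (powerful) numbers: every square-full $n$ admits a unique representation $n = a^{2}b^{3}$ with $b$ square-free. Granting this factorization, counting square-full $n \leq X$ reduces to counting pairs $(a,b)$ with $b$ square-free and $a^{2}b^{3} \leq X$.

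First I would prove (or simply recall) the factorization. Given a square-full $n$, write its prime factorization $n = \prod p^{e_p}$ with each $e_p \geq 2$. For each prime $p$ with $e_p$ even, contribute $p^{e_p/2}$ to $a$ and nothing to $b$; for each $p$ with $e_p$ odd (so $e_p \geq 3$), contribute $p^{(e_p-3)/2}$ to $a$ and $p$ to $b$. Then $b$ is square-free by construction and $n = a^{2}b^{3}$; uniqueness is straightforward since $b$ is determined as the product of primes appearing to an odd power in $n$.

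Using this, I would bound
\begin{align*}
\mathop{\sum_{n \leq X}}_{n \text{ square-full}} 1 \;=\; \sum_{\substack{b \geq 1 \\ b \text{ square-free}}} \;\sum_{\substack{a \geq 1 \\ a^{2}b^{3} \leq X}} 1 \;\leq\; \sum_{b \leq X^{1/3}} \sqrt{X/b^{3}} \;=\; \sqrt{X}\, \sum_{b \leq X^{1/3}} b^{-3/2},
\end{align*}
and conclude by noting that $\sum_{b=1}^{\infty} b^{-3/2} = \zeta(3/2) < \infty$, which gives the desired $\ll \sqrt{X}$.

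There is no real obstacle here; the only thing to be careful about is the $a = 0$ or $b = 0$ edge cases (excluded since $n \geq 1$) and the use of the uniqueness of the $a^{2}b^{3}$ representation to avoid double counting. The bound is in fact sharp up to constants, since already $b = 1$ contributes $\lfloor \sqrt{X} \rfloor$ perfect squares.
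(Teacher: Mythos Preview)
Your proof is correct. The paper uses a closely related but slightly different factorization: it writes any square-full $n$ as $n = m_1^2 m_2$ with $m_2 \mid m_1$, then substitutes $m_1 = m_2 m_3$ to get $n = m_3^2 m_2^3$ and bounds
\[
\sum_{\substack{n \leq X \\ n \text{ square-full}}} 1 \;\leq\; \sum_{m_2} \sum_{m_3 \leq \frac{1}{m_2}\sqrt{X/m_2}} 1 \;\leq\; \sqrt{X}\sum_{m_2} m_2^{-3/2} \ll \sqrt{X}.
\]
This is effectively your $a^2 b^3$ decomposition without the square-free constraint on $b$, so it is only an upper bound (the representation need not be unique), whereas you use the genuine structure theorem with $b$ square-free and hence an equality at the counting step. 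Your route is the more classical one and gives a slightly cleaner argument; the paper's version avoids recalling the uniqueness of the $a^2 b^3$ representation at the cost of an inequality. The final sum $\sqrt{X}\sum b^{-3/2}$ is identical in both.
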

\begin{proof}
  Since $n$ is square full, we can write $n = m^2_1 m_2$ such that $m_2 | m_1$.  We have
 \begin{align*}
  S =\,    \mathop{\sum_{n \leq X}}_{n \,\text{is square full}}\,1\, \leqslant \sum_{\substack{ m^2_1 m_2 \leqslant X \\ m_2 \mid m_1}} 1  \leq \,  \sum_{m_2 \leqslant X}\,\,\, \sum_{\substack{m_1 \leq \sqrt{\frac{X}{m_2}} \\ m_2 | m_1 }}\,1.
 \end{align*}
 Since $m_2 | m_1 $, we write $m_1 = m_2 m_3$.  We have
 \begin{align*}
S \leqslant  \, \sum_{m_2 \leqslant X}\,\,\, \mathop{\sum_{m_3 \leq \frac{1}{m_2}\sqrt{\frac{X}{m_2}}}}\,1 \,\leq \,\sqrt{X}\,\sum_{m_2 \leqslant X}\, \frac{1}{m_2}{\frac{1}{\sqrt{m_2}}}  \,\ll \, \sqrt{X}. 
 \end{align*}
 This proves our lemma.
\end{proof}

\vspace{0.5cm}
\section{Proof of Theorem \ref{th1}}
Our main sum of study is given  by
\begin{align*}
	\mathcal{D}(H,X) =&\,  \frac{1}{H}\sum_{h=1}^\infty \lambda_f(h) W_1\left( \frac{h}{H}\right)\sum_{n=1}^\infty  A_{\pi_1}(1,n)  A_{\pi_2} (1,n+h) W_2\left( \frac{n}{X} \right)\\
 =&\, \frac{1}{H}\sum_{h=1}^\infty \lambda_f(h) W_1\left( \frac{h}{H}\right)\sum_{n=1}^\infty  A_{\pi_1}(1,n)  W_2\left( \frac{n}{X}\right)\\
	&\times \sum_{m=1}^\infty\, A_{\pi_2} (1,m)\, W_3\left( \frac{m}{Y}\right)\,\delta(n+h,m),
\end{align*} 
where $W_1, W_2$, $W_3$ are smooth and compactly supported functions with support in the interval $[1,2]$. By using the expression for the delta symbol $\delta(n,m)$ given in equation \eqref{delta}, we can rewrite the above sum as
\begin{align}\label{g1}
	\mathcal{D}(H,X) =&\, \frac{1}{HQ}\sum_{q\leq Q}\frac{1}{q}\,\,\, \sideset{}{^\star} \sum_{a\bmod q}\int_{\mathbb{R}}A(u)\, \psi(q,u)\, \sum_{h=1}^\infty \lambda_f(h)  \,e\left(\frac{ah}{q}\right)\,e\left(\frac{hu}{qQ}\right) W_1\left( \frac{h}{H}\right)\notag\\ 
	&\times \sum_{n=1}^\infty  A_{\pi_1}(1,n)\, e\left(\frac{an}{q}\right)\,e\left(\frac{nu}{qQ}\right) W_2\left( \frac{n}{X}\right)\notag\\
	&\times \sum_{m=1}^\infty   A_{\pi_2} (1,m)\,e\left(\frac{-am}{q}\right)\,e\left(\frac{-mu}{qQ}\right)  W_3\left( \frac{m}{Y}\right)du + O(X^{-2023}).
\end{align} 
Here $Y = X+H \asymp X$ and $A$ is a smooth and compactly supported function, supported on the interval $[-2X^{\epsilon}, 2X^{\epsilon}]$ such that $A(u)=1, \,\,\forall\, u \in [-X^{\epsilon}, X^{\epsilon}] $ with $A^{(j)}(u) \leq 1$.

\subsection{Applying the Voronoi summation formulae}
In this subsection, we will analyze our main sum given in equation \eqref{g1} by applying the Voronoi summation formulas to the involved summations.
We consider the case when $f$ is a Hecke-Maass cusp form for $SL(2,\mathbb
Z)$. There are almost similar, even simpler calculations for the case of holomorphic cusp forms.  Let us further rewrite the sum $\mathcal{D}(H,X)$ as
\begin{align}\label{A17}
	\mathcal{D}(H,&X)= \frac{1}{HQ}\sum_{q\leq Q}\frac{1}{q}\,\,\, \sideset{}{^\star} \sum_{a\bmod q}\int_{\mathbb{R}}A(u)\, \psi(q,u)\,\Lambda_1 (...)\,\Lambda_2 (...)\,\Lambda_3 (...)\,du + O(X^{-2023}),
\end{align} where
\begin{align}\label{B1}
	\Lambda_1 (...) = \sum_{h=1}^\infty \lambda_f(h)  \,e\left(\frac{ah}{q}\right)\, W_1\left( \frac{h}{H}\right) e\left(\frac{hu}{qQ}\right),
\end{align}
\begin{align}\label{B2}
	\Lambda_2(...) = \sum_{n=1}^\infty  A_{\pi_1}(1,n)\, e\left(\frac{an}{q}\right)\,W_2\left( \frac{n}{X}\right) e\left(\frac{nu}{qQ}\right),	
\end{align}
\begin{align}\label{B3}
	\Lambda_3(...) = \sum_{m=1}^\infty   A_{\pi_2}(1,m)\,e\left(\frac{-am}{q}\right)\,W_3\left( \frac{m}{Y}\right) e\left(\frac{-mu}{qQ}\right).
\end{align}

Now we will apply the $GL(2)$-Voronoi summation formula to $h$-sum. In fact, we have the following Lemma.

\begin{lemma}\label{lemma8}
	Let $\Lambda_1(...)$ be as given in equation \eqref{B1}, we have
	\begin{align*}
		\Lambda_1 (...) = \frac{{H^{3/4}}}{\sqrt q} \sum_{\pm} \sum_{h\ll q^2/H }\, \frac{\lambda_f( h)}{h^{1/4}}\, e\left(\pm \frac{\overline{a}h}{q}\right) \eth_1^{\pm}(h,u,q) + O(X^{-2023}),
	\end{align*} where the integral transform is given by
 \begin{align*}
 \eth_1^{\pm}(h,u,q) \, = \,   \int_0^\infty  V_1(x)\, e\left(\frac{Hxu}{qQ}\pm \frac{2 \sqrt{Hhx}}{q} \right)dx,
 \end{align*}
where $V_1$ denotes a new weight function depending upon $W_1$.
\end{lemma}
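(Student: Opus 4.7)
The plan is to apply Lemma \ref{voronoi} (the $GL(2)$-Voronoi summation formula) directly to $\Lambda_1(\ldots)$ with the test function $w(x) := W_1(x/H)\, e(xu/(qQ))$, which is smooth and compactly supported on $(0,\infty)$. This immediately yields
\[
\Lambda_1(\ldots) = \frac{1}{q}\sum_{\pm} \sum_{h=1}^{\infty} \lambda_f(\mp h)\, e\!\left(\pm \frac{\bar a h}{q}\right) W^{\pm}\!\left(\frac{h}{q^{2}}\right),
\]
where $W^{\pm}$ are the Bessel integral transforms of $w$ from the statement of that lemma. The whole remaining task is to analyze $W^{\pm}(h/q^{2})$: to exhibit its leading oscillatory shape, to extract the correct prefactor, and to truncate the dual sum at $h \ll q^{2}/H$.

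I would first rescale $x \mapsto Hx$ inside each $W^{\pm}(h/q^{2})$, so that the new weight $W_1(x)$ sits on $[1,2]$ and the Bessel argument becomes $z := 4\pi\sqrt{Hhx}/q$. In the generic range $Hh/q^{2}\gg X^{\epsilon}$ I would substitute the standard large-argument asymptotics: the bound $K_{2i\nu}(z) \sim \sqrt{\pi/(2z)}\,e^{-z}$ shows that $W^{+}$ is negligible by its exponential decay, while $Y_{\pm 2i\nu}(z)$ contributes two oscillating pieces whose leading terms are constant multiples of $z^{-1/2} e^{\pm i z}$. The factor $z^{-1/2}$ supplies $q^{1/2}(Hhx)^{-1/4}$ up to an absolute constant; combining this with the Jacobian $H$ from the rescaling and the overall $1/q$ from Voronoi gives exactly the prefactor $H^{3/4}/(q^{1/2}h^{1/4})$. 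Writing $e^{\pm iz} = e(\pm 2\sqrt{Hhx}/q)$ and absorbing all smooth leftover factors (the $x^{-1/4}$, constants, higher-order asymptotic corrections) into a single weight $V_1$ depending only on $W_1$ produces the integral $\eth_1^{\pm}(h,u,q)$ claimed in the statement.

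The truncation $h \ll q^{2}/H$ is built in on the $+$ side by the exponential decay of the $K$-Bessel function. On the $-$ side I would carry out repeated integration by parts in the integral with phase $\phi(x) = Hxu/(qQ) \pm 2\sqrt{Hhx}/q$. On the effective support $|u| \ll X^{\epsilon}$ supplied by $A(u)\psi(q,u)$, the first summand in $\phi'(x)$ is negligible compared to the second whenever $h \gg q^{2}X^{\epsilon}/H$, and then $|\phi'(x)| \asymp \sqrt{Hh}/q \gg X^{\epsilon/2}$ uniformly on $x \in [1,2]$. Iterating integration by parts then gives arbitrary polynomial decay, hence the error $O(X^{-2023})$ from the dual tail.

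The main obstacle here is purely organizational rather than conceptual: one has to write out the complete asymptotic expansion of $Y_{\pm 2i\nu}$, verify that every sub-leading term can be absorbed into an updated $V_1$ whose derivatives still satisfy bounds of the form $x^{j} V_1^{(j)}(x) \ll_j 1$ (so that later stationary-phase analysis continues to apply), and check that the tail contributions from the $+$ side and from $h \gg q^{2}X^{\epsilon}/H$ combine into the claimed negligible error. All of this is standard once the large-argument Bessel asymptotics are in hand.
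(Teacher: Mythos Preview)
Your proposal is correct and follows essentially the same route as the paper's own proof: apply the $GL(2)$ Voronoi formula of Lemma~\ref{voronoi} with weight $W_1(x/H)e(xu/(qQ))$, rescale $x\mapsto Hx$, replace the Bessel kernels by their large-argument oscillatory expansions (the paper simply cites the appendix of \cite{r19} for this, whereas you spell out the mechanism), read off the prefactor $H^{3/4}q^{-1/2}h^{-1/4}$, and then truncate the dual $h$-sum at $h\ll q^2/H$ by repeated integration by parts in the phase of $\eth_1^{\pm}$. The only cosmetic difference is that you separate the $K_{2i\nu}$ and $Y_{\pm 2i\nu}$ contributions explicitly (noting the exponential decay of the former), while the paper treats $\mathcal{W}_1^{+}$ and $\mathcal{W}_1^{-}$ in parallel and absorbs everything into the same $V_1$; both lead to the stated lemma.
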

\begin{proof}  
	By applying the $GL(2)$-Voronoi summation formula given in Lemma \ref{voronoi} to the sum over $h$ in equation \eqref{B1}, we get
	
	\begin{align}\label{A5}
		\Lambda_1(...) = \frac{1}{q} \sum_{\pm} \sum_{h=1}^\infty \lambda_f(h)\, e\left(\pm \frac{\overline{a}h}{q}\right) \mathcal{W}^{\pm}_1 \left( \frac{h}{q^2}\right),
	\end{align}  where $\mathcal{W}^{\pm}_1$ represents the integral transform from Lemma \ref{voronoi}, i.e.

 \begin{align*}
		\mathcal{W}^{+}_1 \left( \frac{h}{q^2}\right)= 4\cosh( \pi \nu_1) \int_0^\infty  {W}_1\left( \frac{x}{H}\right) e\left(\frac{xu}{qQ}\right)  K_{2i\nu_1 }\left(\frac{4\pi \sqrt{hx}}{q}\right) dx.
	\end{align*}

 \begin{align*}
		\mathcal{W}^{-}_1 \left( \frac{h}{q^2}\right)= \frac{- \pi}{\cosh( \pi \nu_1)} \int_0^\infty {W}_1\left( \frac{x}{H}\right) e\left(\frac{xu}{qQ}\right) \, \left\lbrace  Y_{2i\nu_1 } + Y_{-2i\nu_1 }\right\rbrace \left(\frac{4\pi \sqrt{hx}}{q}\right) dx,
	\end{align*}
	
  We first deal with $\mathcal{W}^{+}_1 \left( \frac{h}{q^2}\right)$. By changing the variables $x \longrightarrow Hx$, and using the approximation of the Bessel function $Y_{\pm2 i\nu}(y),\, K_{\pm2i\nu}(y)$ (see appendix in \cite{r19}), we get

 \[ \mathcal{W}^{+}_1 \left( \frac{h}{q^2}\right) =\frac{H^{3/4}\sqrt{q}}{h^{1/4}}\,\eth_1^{+}(h,u,q), \] 
	where the integral transform $\eth_1^{+}(h,u,q)$ is defined as
\begin{align}\label{z1}
  \int_0^\infty  V_1(x)\, e\left(\frac{Hxu}{qQ}\pm \frac{2 \sqrt{Hhx}}{q} \right)dx.
 \end{align}

Here $V_1$ is a new smooth and compactly supported function. Notice that $\eth_1^{+}(h,u,q)$ is a sum of two integrals. Using integration by parts repeatedly, we conclude that the above integral $ \eth_1^{+}(h,u,q)$ is negligibly small unless $ h \ll \frac{q^2}{H}$.

	Similarly, in the case of $\mathcal{W}^{-}_1 \left( \frac{h}{q^2}\right)$, by following the same steps, we will get a similar expression for  $ \eth_1^{-}(h,u,q)$. 
Finally, by putting all these observations into equation \eqref{A5}, we get 
	\begin{align}\label{A9}
		\Lambda_1(...) = \frac{{H^{3/4}}}{\sqrt q} \sum_{\pm} \sum_{h \ll q^2/H } \frac{\lambda_f(h)}{h^{1/4}} e\left(\pm \frac{\overline{a}h}{q}\right) \eth_1^{\pm}(h,u,q) + O(X^{-2023}),
	\end{align} where the integral $\eth_1^{\pm}(h,u,q)$ is defined above in equation \eqref{z1}.
\end{proof}

Now we apply $GL(3)$-Voronoi summation formula to the $n$-sum and $m$-sum. In fact, we have the following Lemmas.
\begin{lemma}\label{lemma9}
	Let $\Lambda_2(...)$ be as given in equation \eqref{B2}, we have 
\begin{align*}
	\Lambda_2(...) = \frac{X^{2/3}}{q} \sum_{\pm} \sum_{n_{1}|q} \sum_{n^2_1n_{2}\ll N}  \frac{A_{\pi}(n_{1},n_{2})}{n^{-1/3}_{1} n^{1/3}_{2}} S\left( \overline{a}, \pm n_{2}; q/n_{1}\right)& \eth_2^{\pm}(n^2_1n_2,u,q)
\end{align*} where $N = \text{max}\left\{{q^3}/{X}, X^{1/2}u^3\right\}$, integral transform is given by
\begin{align*}
	\eth_2^{\pm}(n^2_1n_2,u,q) = \, \int_{0}^\infty V_2(z)\, e\left(\frac{Xuz}{qQ} \pm \frac{3(Xz n_1^2 n_2)^{1/3}}{q} \right)dz,
\end{align*}
where $V_2$ denotes a new weight function depending upon $W_2$.
\end{lemma}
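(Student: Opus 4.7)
The plan is to proceed in strict parallel with Lemma \ref{lemma8}, replacing the $GL(2)$-Voronoi formula by its $GL(3)$ analogue (Lemma \ref{gl3voronoi}) and then invoking the explicit asymptotic expansion of the integral transform provided by Lemma \ref{GL3oscilation}.

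First, I would apply Lemma \ref{gl3voronoi} with $m=1$ (passing to the dual form so that $A_{\pi_1}(1,n)$ sits in the first slot after dualizing) to rewrite
\begin{align*}
\Lambda_2(\ldots) = q \sum_{\pm} \sum_{n_1 \mid q} \sum_{n_2=1}^\infty \frac{A_{\pi_1}(n_1,n_2)}{n_1 n_2}\, S\!\left(\overline{a}, \pm n_2;\, q/n_1\right) G_{\pm}\!\left(\frac{n_1^2 n_2}{q^3}\right),
\end{align*}
where $G_{\pm}$ is computed against the test function $W_2(y/X) e(yu/qQ)$. Next I would plug in the asymptotic expansion from Lemma \ref{GL3oscilation} and keep only the leading term ($j=1$); the higher-$j$ terms and the tail contribute smaller powers of $(n_1^2 n_2 X/q^3)$ and can be absorbed into a negligible error just as in the proof of Lemma \ref{lemma8}. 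After the substitution $y = Xz$, the prefactor $x \cdot (xy)^{-1/3}$ with $x=n_1^2 n_2/q^3$ becomes $(n_1^2 n_2)^{2/3} X^{2/3}/q^2$, while the oscillatory factor inside the $y$-integral becomes
\begin{align*}
e\!\left(\frac{Xuz}{qQ} \pm \frac{3(Xz\, n_1^2 n_2)^{1/3}}{q}\right),
\end{align*}
matching the stated definition of $\eth_2^\pm(n_1^2 n_2, u, q)$. Multiplying through, the outside prefactor simplifies as
\[
q \cdot \frac{(n_1^2 n_2)^{2/3} X^{2/3}}{q^2\, n_1 n_2} = \frac{X^{2/3}}{q} \cdot \frac{n_1^{1/3}}{n_2^{1/3}},
\]
which is exactly the $n_1^{-(-1/3)} n_2^{1/3}$ normalization appearing in the claim, with $V_2$ absorbing $W_2$, the harmless factor $z^{-1/3}$, and the constants $c_1(\pm)$, $d_1(\pm)$.

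Finally, I would truncate the $n_2$-sum by integration by parts on $\eth_2^\pm$. The phase derivative in $z$ is
\[
\Phi'(z) = \frac{Xu}{qQ} \pm \frac{(X n_1^2 n_2)^{1/3}}{q\, z^{2/3}}.
\]
On the support $z \asymp 1$, the two terms either (i) have no stationary point, in which case repeated integration by parts forces $(X n_1^2 n_2)^{1/3}/q \ll X^\epsilon$, i.e.\ $n_1^2 n_2 \ll q^3/X$; or (ii) admit a stationary point, which requires the two terms to balance and yields $(Xu/qQ)^3 \approx X n_1^2 n_2 / q^3$, i.e.\ $n_1^2 n_2 \ll X^{1/2} u^3$. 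Taking the larger of the two produces the claimed cutoff $n_1^2 n_2 \ll N = \max\{q^3/X,\, X^{1/2} u^3\}$, with the tail negligible.

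The main technical obstacle is not the Voronoi step itself but the careful execution of the integration-by-parts argument justifying the cutoff $n_1^2 n_2 \ll N$: one must verify that each additional differentiation in $z$ gains a factor $\max\{1, (X n_1^2 n_2/q^3)^{1/3}\}^{-1}$ uniformly in $u$ on the support of $A(u) \psi(q,u)$, and check that derivatives falling on $V_2$ rather than on the exponential do not spoil this gain. Once this is in place, summing the errors and retaining the leading contribution yields the expression stated in the lemma.
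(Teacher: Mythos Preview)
Your proposal is correct and follows essentially the same route as the paper: apply the $GL(3)$-Voronoi formula (Lemma \ref{gl3voronoi}) with $m=1$, insert the asymptotic expansion from Lemma \ref{GL3oscilation} keeping the leading term, make the substitution $y=Xz$ to extract the prefactor $(n_1^2 n_2)^{2/3}X^{2/3}/q^2$, and then truncate via repeated integration by parts. Your stationary-phase dichotomy for the cutoff $N=\max\{q^3/X,\,X^{1/2}u^3\}$ is in fact more explicit than the paper's own one-line justification ``using integration by parts repeatedly''.
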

\begin{proof}
	
	By applying the $GL(3)$-Voronoi summation formula given in Lemma \ref{gl3voronoi} to the $n$-sum in equation \eqref{B2}, we get
	\begin{align}\label{A10}
		\Lambda_2(...) = \,q\, \sum_{\pm} \sum_{n_{1}|q} \sum_{n_{2}=1}^{\infty}  \frac{A_{\pi}(n_{1},n_{2})}{n_{1} n_{2}} S\left(\overline{a}, \pm n_{2}; q/n_{1}\right) \, \mathcal{W}^{\pm}_2 \left(\frac{n_{1}^2 n_{2}}{q^3 }\right),
	\end{align} 

 Using Lemma \ref{GL3oscilation}, upto a negligible error term, the integral transform $\mathcal{W}^{\pm}_2 \left(\frac{n_{1}^2 n_{2}}{q^3 }\right)$ will reduce to the following

	\begin{align*}
		\mathcal{W}^{\pm}_2 \left(\frac{n_{1}^2 n_{2}}{q^3 }\right) = \frac{X^{2/3}}{q^2} (n_{1}^2 n_{2})^{2/3}\, \eth_2^{\pm}(n^2_1n_2,u,q)+ O(X^{-2023}),
	\end{align*} 
where
\begin{align*}
\eth_2^{\pm}(n^2_1n_2,u,q) = \, \int_{0}^\infty V_2(z)\, e\left(\frac{Xuz}{qQ} \pm \frac{3(Xz n_1^2 n_2)^{1/3}}{q} \right)dz.  
\end{align*}
	Here, $V_2$ is a new smooth and compactly supported function. Now, using integration by parts repeatedly, we get that the integral $\eth_2^{\pm}(n^2_1n_2,u,q)$  is negligibly small if
	\begin{align*}
		n^2_1n_2 \gg \left\{\frac{q^3}{X} + X^{1/2}u^3\right\} =: N.
	\end{align*}
	
	Hence, we can say that $\mathcal{W}^{\pm}_2 \left(\frac{n_{1}^2 n_{2}}{q^3}\right)$ is negligibly small if \,$n_1^2n_2 \gg {N}$. From equation \eqref{A10}, we get 
	
	\begin{align*}
		\Lambda_2(...)= \frac{X^{2/3}}{q}\sum_{\pm}  \sum_{n_{1}|q} \sum_{n^2_1n_{2}\ll N}  \frac{A_{\pi}(n_{1},n_{2})}{n^{-1/3}_{1} n^{1/3}_{2}} S\left(  \overline{a}, \pm n_{2}; q/n_{1}\right)\, &\eth_2^{\pm}(n^2_1n_2,u,q)+  O(X^{-2023}). 
	\end{align*}
 which is our desired result.
\end{proof}

\begin{lemma}\label{lemma10}
	Let $\Lambda_3(...)$ be as given in equation \eqref{B3},  we have 

\begin{align*}
	\Lambda_3(...) = \frac{Y^{2/3}}{q} \sum_{\pm} \sum_{m_{1}|q}\, \sum_{m^2_1m_{2}\ll M}  \frac{A_{\pi}(m_{1},m_{2})}{m^{-1/3}_{1} m^{1/3}_{2}} S\left( \overline{a}, \pm m_{2}; q/m_{1}\right)& \eth_3^{\pm}(m^2_1m_2,u,q)+  O(X^{-2023}),
\end{align*} where $M = \text{max}\left\{{q^3}/{Y}, Y^{1/2}u^3\right\} $, the integral transform is given by
\begin{align*}
	\eth_3^{\pm}(m^2_1m_2,u,q) = \, \int_{0}^\infty V_3(y)\, e\left(\frac{-Yuy}{qQ} \pm  \frac{3 (Yy m_1^2 m_2)^{1/3}}{q} \right)dy, 
\end{align*} where $V_3$ denotes a new weight function depending upon $W_3$.
\end{lemma}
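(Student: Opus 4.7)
The plan is to mirror the proof of Lemma \ref{lemma9} step by step, with $X\mapsto Y$, $A_{\pi_1}\mapsto A_{\pi_2}$, and careful bookkeeping of the sign changes coming from the additive characters $e(-am/q)$ and $e(-mu/qQ)$ in $\Lambda_3(...)$. The statement is in every structural respect the $m$-analogue of Lemma \ref{lemma9}, so the strategy is Voronoi, then insert the asymptotic expansion for the integral transform, then truncate the dual sum by integration by parts.

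First I would apply the $GL(3)$-Voronoi summation formula (Lemma \ref{gl3voronoi}) to the $m$-sum in equation \eqref{B3}. Writing $\overline{-a} \equiv -\overline{a}\pmod q$ and absorbing the resulting minus sign into the $\pm$ inside the Kloosterman sum, I obtain
\begin{align*}
\Lambda_3(...) = q \sum_{\pm}\sum_{m_1\mid q}\sum_{m_2=1}^{\infty} \frac{A_{\pi_2}(m_1,m_2)}{m_1 m_2}\, S(\overline{a},\pm m_2;q/m_1)\, \mathcal{W}_3^{\pm}\!\left(\frac{m_1^2 m_2}{q^3}\right),
\end{align*}
where $\mathcal{W}_3^{\pm}$ is the integral transform from Lemma \ref{gl3voronoi} attached to the weight $W_3(y/Y)\, e(-yu/qQ)$.

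Next I would feed $\mathcal{W}_3^{\pm}$ into the asymptotic expansion of Lemma \ref{GL3oscilation}. Retaining only the leading $j=1$ term (the $j\geq 2$ contributions are absorbed into an updated weight $V_3$ exactly as in Lemma \ref{lemma9}) and making the change of variables $y\mapsto Yy$, one arrives at
\begin{align*}
\mathcal{W}_3^{\pm}\!\left(\frac{m_1^2 m_2}{q^3}\right) = \frac{Y^{2/3}}{q^2}(m_1^2 m_2)^{2/3}\,\eth_3^{\pm}(m_1^2 m_2,u,q) + O(X^{-2023}),
\end{align*}
with $\eth_3^{\pm}$ exactly the integral displayed in the lemma. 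Combining with $\frac{A_{\pi_2}(m_1,m_2)}{m_1 m_2}(m_1^2 m_2)^{2/3} = \frac{A_{\pi_2}(m_1,m_2)}{m_1^{-1/3}m_2^{1/3}}$ and the overall factor $q$ from Voronoi reproduces the claimed prefactor $Y^{2/3}/q$.

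Finally I would truncate the $m_2$-sum by repeated integration by parts in $\eth_3^{\pm}$. On the compact support of $V_3$, the $y$-derivative of the phase $\frac{-Yuy}{qQ}\pm\frac{3(Yy\, m_1^2 m_2)^{1/3}}{q}$ is of size $\max\bigl\{Yu/(qQ),\,(Ym_1^2 m_2)^{1/3}/q\bigr\}$; so $\eth_3^{\pm}$ is negligibly small unless $m_1^2 m_2\ll\max\{q^3/Y,\, Y^{1/2}u^3\}=M$, which is the claimed truncation. Since the argument is a direct transposition of Lemma \ref{lemma9}, no genuine obstacle arises; the only point demanding mild care is the sign bookkeeping induced by the two negative exponentials, both of which merely flip signs already hidden inside the $\pm$ symbols.
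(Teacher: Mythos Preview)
Your proposal is correct and is exactly the approach the paper takes: the paper's own proof of this lemma is the single sentence ``The proof will follow from the same steps as in the above lemma,'' and what you have written is precisely a careful transcription of those steps with the substitutions $X\mapsto Y$, $A_{\pi_1}\mapsto A_{\pi_2}$, and the sign bookkeeping from $e(-am/q)$ and $e(-mu/qQ)$. If anything, you have supplied more detail than the paper does.
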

\begin{proof}
The proof will follow from the same steps as in the above lemma.
\end{proof}

\vspace{0.2cm}
We conclude this subsection by combining the analysis done so far in the next lemma.
\begin{lemma}\label{p2}
	We have,
	\begin{align*}
		\mathcal{D}(H,X) = &\frac{X^{2/3}Y^{2/3}}{QH^{1/4}}\sum_{q\leq Q}\frac{1}{q^{7/2}}\, \,\, \sum_{\pm} \sum_{n_{1}|q} \sum_{n_{2} \ll N_0}  \frac{A_{\pi_1}(n_{1},n_{2})}{n^{-1/3}_{1} n^{1/3}_{2}} \notag\\
  &\times \sum_{\pm}\sum_{m_{1}|q} \sum_{m_{2}\ll M_0}  \frac{A_{\pi_2}(m_{1},m_{2})}{m^{-1/3}_{1} m^{1/3}_{2}}\,\,\sum_{\pm}\sum_{h\ll H_0} \frac{\lambda_f(h)}{h^{1/4}}\,\mathcal{C}^{\pm}(q;h,n_2)\,\notag\\
  &\times \int_{\mathbb{R}}A(u)\, \psi(q,u)\,\, \eth_1^{\pm}(h,u,q)\,\, \eth_2^{\pm}(n_1n^2_2,u,q)\,\, \eth_3^{\pm}(m_1m^2_2,u,q) \,du + O(X^{-2023}),
	\end{align*}  where $H_0 = q^2/H, N_0 = {N}/n_1^2$ and  $M_0 = {M}/m_1^2$, $A>0$ and the character sum, which we denote by $\mathcal{C}^{\pm}(q;h,n_2) $ is given as
 \begin{align*}
\, \sideset{}{^\star} \sum_{a\bmod q} e\left( \pm\frac{\overline{a}h}{q}\right) S\left( \overline{a},  \pm n_{2}; q/n_{1}\right)  S\left( \overline{a},  \pm m_{2}; q/m_{1}\right).     
 \end{align*}
\end{lemma}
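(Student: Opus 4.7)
The proof of this lemma is essentially a bookkeeping exercise: I would substitute the three Voronoi-transformed expressions from Lemma \ref{lemma8}, Lemma \ref{lemma9}, and Lemma \ref{lemma10} into the expansion \eqref{A17} and collect the $a$-independent and $a$-dependent factors separately. There is no new analytic content; the heart of the work was done in establishing the three preceding lemmas, so here I am only organising the outcome into a single clean identity.

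First, I would plug the three outputs into \eqref{A17}. The prefactors multiply as
\begin{align*}
\frac{1}{HQ}\cdot\frac{1}{q}\cdot\frac{H^{3/4}}{\sqrt{q}}\cdot\frac{X^{2/3}}{q}\cdot\frac{Y^{2/3}}{q}\,=\,\frac{X^{2/3}Y^{2/3}}{Q\,H^{1/4}}\cdot\frac{1}{q^{7/2}},
\end{align*}
which produces the overall constant displayed in the statement. Each of $\Lambda_1, \Lambda_2, \Lambda_3$ contributes three extra summations (a choice of sign $\pm$, a divisor sum in $n_1\mid q$ or $m_1\mid q$ where relevant, and a dual variable $h$, $n_2$, or $m_2$) together with Fourier coefficients, Kloosterman sums, and oscillatory integrals $\eth_j^\pm$. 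I would simply relabel and reorder these sums outside the $a$-sum and the $u$-integral, which is legal because the ranges of $h, n_1, n_2, m_1, m_2$ are independent of $a$ and $u$ after the Voronoi transformations have been applied.

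Next, I would isolate the $a$-dependence. Only the factors $e(\pm \overline{a}h/q)$, $S(\overline{a}, \pm n_2; q/n_1)$, and $S(\overline{a}, \pm m_2; q/m_1)$ depend on $a$; extracting the $\sideset{}{^\star}\sum_{a \bmod q}$ through the $h$-, $n_2$-, $m_2$-sums gives exactly the character sum
\begin{align*}
\mathcal{C}^{\pm}(q;h,n_2)\,=\,\sideset{}{^\star}\sum_{a\bmod q} e\!\left(\pm\frac{\overline{a}h}{q}\right)S\!\left(\overline{a},\pm n_2;\tfrac{q}{n_1}\right)S\!\left(\overline{a},\pm m_2;\tfrac{q}{m_1}\right).
\end{align*}
The weight $A(u)\psi(q,u)$ and the three integral transforms $\eth_1^\pm, \eth_2^\pm, \eth_3^\pm$ depend on $u$ but not on $a$, so they combine into the single fourfold integral in $u$ appearing in the statement. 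Rewriting the ranges $n_1^2 n_2 \ll N$ and $m_1^2 m_2 \ll M$ as $n_2 \ll N_0 = N/n_1^2$ and $m_2 \ll M_0 = M/m_1^2$, together with $h \ll H_0 = q^2/H$, yields the displayed formula.

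The only subtle points are administrative: first, the three $O(X^{-2023})$ errors from Lemmas \ref{lemma8}--\ref{lemma10} must be shown to remain of size $O(X^{-A})$ for arbitrary $A>0$ after being multiplied by the other two $\Lambda$'s, but the Rankin--Selberg bound on average for the Fourier coefficients and the trivial bound on the remaining factors (each at most polynomial in $X$) absorb these easily by taking $A$ large enough; second, the tacit substitution $n_1^2 n_2 \rightsquigarrow n_2$ in the argument of $\eth_2^{\pm}$ (and similarly for $\eth_3^{\pm}$) must be tracked carefully because the oscillatory phase in $\eth_2^{\pm}$ still depends on $n_1^2 n_2$, not merely on $n_2$. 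So I would keep the argument of $\eth_2^{\pm}$ written as $n_1^2 n_2$ (and $\eth_3^{\pm}$ as $m_1^2 m_2$) while indexing the summations by $n_2$ and $m_2$. No genuine obstacle arises; the content of the lemma is that the three individual Voronoi applications assemble consistently.
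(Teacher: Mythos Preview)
Your proposal is correct and follows exactly the same approach as the paper: the paper's proof consists of a single sentence stating that one substitutes the results of Lemmas \ref{lemma8}, \ref{lemma9}, and \ref{lemma10} into equation \eqref{A17}. You have simply spelled out the bookkeeping (the prefactor multiplication, the isolation of the $a$-sum into $\mathcal{C}^{\pm}$, and the handling of the error terms) in more detail than the paper does.
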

\begin{proof}
	By putting the results from Lemma \ref{lemma8}, Lemma \ref{lemma9}, and Lemma \ref{lemma10} into the expression for $\mathcal{D}(H, X)$ given in equation \eqref{A17}, we will get our desired result. 
\end{proof}

As far as our analysis is concerned, all the eight terms of $\mathcal{D}(H, X)$ in the above lemma are of the same complexity. So we focus our attention on one such term, namely  
\begin{align}\label{A18}
	\mathcal{D}_1(H,X) = &\frac{X^{2/3}Y^{2/3}}{QH^{1/4}}\sum_{q\leq Q}\frac{1}{q^{7/2}}\, \,\,  \sum_{n_{1}|q} \sum_{n_{2} \ll N_0}  \frac{A_{\pi_1}(n_{1},n_{2})}{n^{-1/3}_{1} n^{1/3}_{2}}\,\,\sum_{h\ll H_0} \frac{\lambda_f(h)}{h^{1/4}} \notag\\
 &\times \sum_{m_{1}|q} \sum_{m_{2}\ll M_0}  \frac{A_{\pi_2}(m_{1},m_{2})}{m^{-1/3}_{1} m^{1/3}_{2}}\, \mathcal{C}^{+}(q;h,n_2)\,\mathcal{I}(n_1^2n_2, m_1^2m_2,h),
\end{align} where
\begin{align*}
 \mathcal{I}(n_1^2n_2, m_1^2m_2,h) \,=\,  \int_{\mathbb{R}}A(u)\, \psi(q,u)\,\, \eth_1^{+}(h,u,q)\eth_2^{+}(n_1^2n_2,u,q) \eth_3^{+}(m_1^2m_2,u,q) \, du.  
\end{align*}

\subsection{Simplification of integrals} In this subsection, we will simplify and find a bound for the above integral transform $\mathcal{I}(n_1^2n_2, m_1^2m_2,h)$. We have the following lemma.

\vspace{0.2cm}

\begin{lemma}\label{lemma12}
	We have,
	\begin{align*}
		\mathcal{I}(n_1^2n_2, m_1^2m_2,h)\,\ll 
		\frac{q}{Q}.
	\end{align*}
\end{lemma}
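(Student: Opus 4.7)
My plan is to isolate the $u$-integration, weighted by the DFI factor $\psi(q, u)$, and to exploit the linear $u$-dependence of the combined phase of $\eth_1^+, \eth_2^+, \eth_3^+$ through repeated integration by parts. Unfolding all three Voronoi transforms gives
\[ \mathcal{I} = \int_{\mathbb{R}} \int_{[1,2]^3} A(u)\, \psi(q, u)\, V_1(x) V_2(z) V_3(y)\, e(\Phi(u, x, z, y))\, dx\, dz\, dy\, du, \]
where the only $u$-dependence of $\Phi$ is the linear factor $u(Hx + Xz - Yy)/(qQ)$; the remaining contributions to $\Phi$ come from the $u$-independent parts of the three Voronoi phases and will be absorbed by the absolute value. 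By Fubini, it suffices to control the inner integral
\[ J(x, y, z) := \int_{\mathbb{R}} A(u)\, \psi(q, u)\, e\!\left(\frac{u\,(Hx + Xz - Yy)}{qQ}\right) du. \]

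The main step is to show that $J$ is essentially supported on the slab $|Hx + Xz - Yy| \ll qQ\, X^\epsilon$ with $|J| \ll X^\epsilon$ there. For this, I will use repeated integration by parts in $u$. The key inputs are the derivative bounds on $\psi(q, u)$ from Section~\ref{section3}: within the effective range $|u| \ll X^\epsilon$, the bound $u^j \partial_u^j \psi(q, u) \ll \min(Q/q, 1/|u|) \log Q$ together with the fact that $A(u)$ is a fixed bump function give $\partial_u^j [A(u) \psi(q, u)] \ll X^{j\epsilon}$ on this range. Consequently each integration by parts in $u$ yields a factor $qQ/|Hx + Xz - Yy|$, so for $|Hx + Xz - Yy| \gg qQ\, X^\epsilon$ the integral $J$ is negligibly small, while in the complementary range the trivial estimate gives $|J| \ll X^\epsilon$. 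Handling this step carefully given the piecewise nature of the bound on $\psi$'s derivatives is the principal technical obstacle.

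It remains to estimate the volume of the slab. Since $Y = X + H \asymp X$ and $|H| \leq X$, the coefficient of $z$ in the linear form $Hx + Xz - Yy$ is $\asymp X$, so for each fixed $(x, y) \in [1, 2]^2$ the set of admissible $z$ is an interval of length $\ll qQ\, X^\epsilon/X$. Using $Q \asymp X^{1/2}$, this gives
\[ \operatorname{vol}\bigl\{(x, z, y) \in [1, 2]^3 : |Hx + Xz - Yy| \ll qQ\, X^\epsilon \bigr\} \ll \frac{qQ}{X}\, X^\epsilon \ll \frac{q}{Q}\, X^\epsilon. \]
Combining this with $|J| \ll X^\epsilon$ on the slab and the trivial bound $|V_i| \ll 1$ then yields $|\mathcal{I}| \ll (q/Q)\, X^{O(\epsilon)}$, which is the asserted bound.
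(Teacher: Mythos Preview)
Your proposal is correct and matches the paper's argument: isolate the $u$-integral, integrate by parts using the bounds on $\psi(q,u)$ to force $|Hx+Xz-Yy|\ll qQ^{1+\epsilon}$, then bound the remaining $(x,y,z)$-integral by the measure of this slab (the paper does this via the substitution $t=(Hx+Xz)/Y-y$, which is equivalent to your volume count). The piecewise subtlety you flag is exactly where the paper splits into cases on the sizes of $q$ and $|u|$, in particular replacing $\psi$ by $1$ via \eqref{Delta} when $q\ll Q^{1-\epsilon}$ and $|u|\ll Q^{-\epsilon}$ rather than differentiating it there.
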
 
\begin{proof}
	
	By substituting the expressions of the integrals $\eth_1^{+}(h,u,q),\,\, \eth_2^{+}(n_1^2n_2,u,q)$ and $\eth_3^{+}(m_1^2m_2,u,q)$ given in Lemma \ref{lemma8}, \ref{lemma9} and \ref{lemma10}, respectively, into $\mathcal{I}(n_1^2n_2, m_1^2m_2,h)$, we get

	\begin{align}\label{M2}
		\mathcal{I}(n_1^2n_2, m_1^2m_2,h)
  =& \,\int_{\mathbb{R}} A(u)\, \psi(q,u)\,e\left(\frac{(Hx + Xz-Yy)u}{qQ} \right)\notag\\
		&\times \,\, \int_0^\infty\,V_{1}(x) \,e\left( \frac{2 \sqrt{Hhx}}{q}\right)\,\int_0^\infty\,\,V_{2}(z)\,\,   e\left( \frac{3(Xz n_1^2 n_2)^{1/3}}{q}\right) \notag\\ 
  &\times\int_0^\infty V_{3}(y)\,e\left( \frac{3(Yy m_1^2 m_2)^{1/3}}{q} \right)
		dy\,dz\,dx\,du.
	\end{align}
	
	First, we analyze the $u$-integral, which is given by
	\begin{align}\label{A40}
		\int_{\mathbb{R}}A(u)\, \psi(q,u)\,e\left(\frac{(Hx - Yy + Xz)u}{qQ} \right) du.
	\end{align}
	We divide our analysis into two cases depending upon the size of variable $q$. For the small $q$ case $q \ll Q^{1-\epsilon}$, we split the $u$-integral into two parts as follows 
	\begin{align*}
		\left(\int_{|u| \ll Q^{-\epsilon}} + 	\int_{|u| \gg Q^{-\epsilon}}\right) A(u)\, \psi(q,u)\,e\left(\frac{(Hx - Yy + Xz)u}{qQ} \right) du.
	\end{align*}
	
	For the first integral with $|u| \ll Q^{-\epsilon}$, we use the properties of $\psi(q,u)$ given in equation \eqref{delta} to replace $\psi(q,u)$ by 1 with a negligible error term. So essentially, we get
	\begin{align*}
		\int_{|u| \ll Q^{-\epsilon}}A(u)\, e\left(\frac{(Hx - Yy + Xz)u}{qQ} \right) du.
	\end{align*}
	
	Now, integrating by parts repeatedly, we get that the integral is negligibly small unless
	
	\begin{align}\label{A36}
		|Hx+Xz-Yy| \ll q\,Q\,Q^{\epsilon} \,\,\, i.e \,\,\,\,\,\, \left|\frac{Hx+Xz}{Y} - y\right| \ll \frac{q\,Q}{Y}Q^{\epsilon} .
	\end{align}
	
	For the second integral with $|u| \gg Q^{-\epsilon}$, integrating by parts the $u$-integral repeatedly and using the properties of involved functions, i.e.
	\begin{align*}
		\frac{\partial^j}{ \partial u^j} \psi(q, u) \ll \  \min \left\lbrace \frac{Q}{q}, \frac{1}{|u|} \right\rbrace \  \frac{\log Q}{|u|^j}\ll Q^{\epsilon j}, \hspace{0.5cm}	A^j(u) \ll Q^{\epsilon j},
	\end{align*} we will get the same restriction as above in equation \eqref{A36}. Also for the generic case i.e. $q \gg Q^{1-\epsilon}$, condition $\left|\frac{Hx+Xz}{Y} - y\right| \ll {q\,Q^{1+\epsilon}}/{Y} $ is trivially true.

	\vspace{0.3cm}
	Writing $\frac{Hx+Xz}{Y} - y = t$ with $|t| \ll {qQ^{1+\epsilon}}/{Y}$, we arrive at the following expression of the integral $\mathcal{I}(n_1^2n_2, m_1^2m_2,h)$
	
	\begin{align}\label{t2}
		\,&\int_{|t| \ll \frac{qQ^{1+\epsilon}}{Y}}\,\int_{0}^\infty\, V_{1}(x)\,e\left(\frac{2 \sqrt{Hhx}}{q}\right)\,\int_{0}^\infty\,\,V_{2}(z)\,V_{3}\left(\frac{Hx+Xz}{Y}-t\right)\\ &\,\,\,\,\notag\times  \  \,e\left(\frac{3(Xz n_1^2 n_2)^{1/3}}{q} + \frac{3 ({m^2_1m_2(Hx+Xz-Yt)})^{1/3}}{q} \right) dz\,dx \,dt\,\,+\, O(X^{-2023}).
	\end{align}
	Estimating the above integral trivially, we get
	\begin{align*}
		\mathcal{I}(n_1^2n_2, m_1^2m_2,h)\, \ll\, \frac{qQ^{1+\epsilon}}{Y} \asymp \frac{q}{Q} Q^{\epsilon}.
	\end{align*}
	Which is our desired result. 
\end{proof}
 
\subsection{Applying Cauchy-Schwarz inequality} In this subsection, we apply the Cauchy-Schwarz inequality to the sums over $n_2$ and $h$ in our main sum $\mathcal{D}_1(H, X)$ given in equation \eqref{A18} to get rid of one of the $GL(3)$ Fourier coefficients and the $GL(2)$ Fourier coefficient. Notice that $\mathcal{D}_1(H,X)$ is dominated by
\begin{align*}
&\,\frac{X^{4/3}}{QH^{1/4}}\,\sum_{q\leq Q}\frac{1}{q^{7/2}}\, \sum_{n_{1}|q} n_1^{1/3}\,\,\sum_{m_{1}|q} m_1^{1/3}\, \sum_{n_{2} \ll N_0}\sum_{h\ll H_0} \frac{|\lambda_f(h)|}{h^{1/4}}\frac{|A_{\pi_1}(n_{1},n_{2})|}{ n^{1/3}_{2}}\\  
 &\times  \left| \sum_{m_{2}\ll M_0}  \frac{A_{\pi_2}(m_{1},m_{2})}{ m^{1/3}_{2}}\,\, \mathcal{C}^{+}(q;h,n_2)\,\, \mathcal{I}(n_1^2n_2, m_1^2m_2,h)\right|.
\end{align*}
Now, applying the Cauchy-Schwarz inequality to the sums over $n_2$ and $h$, we arrive at
\begin{align}\label{A32}
	\mathcal{D}_1(H,X) \ll \, &\frac{X^{4/3}}{QH^{1/4}}\,\sum_{q\leq Q}\frac{1}{q^{7/2}}\, \sum_{n_{1}|q} n_1^{1/3}\, \,\,\Theta^{1/2}\,\Lambda^{1/2}\,\Omega^{1/2},
\end{align} where
\begin{align}\label{c31}
	\Theta = \sum_{n_{2} \ll N_0}  \frac{\left|\lambda_{\pi}(n_{2},n_{1})\right|^2}{n^{2/3}_{2}}, 
\end{align}

\begin{align}\label{c32}
	\Lambda= \sum_{h\ll H_0} \frac{|\lambda_f(h)|^2}{h^{1/2}} \ll H^{1/2}_0,
\end{align}
\begin{align}
	\Omega = \sum_{m_{1}|q} m_1^{2/3}\,\sum_{n_2 \ll N_0}\,\sum_{h\ll H_0}\left| \sum_{m_{2}\ll M_0}  \frac{A_{\pi_2}(m_{1},m_{2})}{ m^{1/3}_{2}}\mathcal{C}^{+}(q;h,n_2)\, \mathcal{I}(n_1^2n_2, m_1^2m_2,h)\right|^2.
\end{align} 

\subsection{Applying Poisson summation formula} To handle the contribution of $\Omega$ to $\mathcal{D}_1(H,X)$, we open the absolute valued square $\Omega$ and break the sums over $n_2$ and $h$ into dyadic blocks $n_2 \sim N_1, h\sim H_1$ by using dyadic partition of unity, where $N_1 \ll N_0$ and $H_1 \ll H_0$. By using smooth compactly supported functions $F_1$ and $F_2$, we smooth out the sums over $n_2$ and $h$. We arrive at the following expression
\begin{align*}
	\Omega \ll \,X^{\epsilon}\,\mathop{\mathop{\text{sup}}_{N_1 \ll N_0}}_{H_1 \ll H_0}\,\sum_{m_1|q}\,m^{2/3}_1 \,\sum_{m_{2}\ll M_0} \frac{|A_{\pi_2}(m_{1},m_{2})|}{ m^{1/3}_{2}}\,\,\sum_{m'_{2}\ll M_0}\frac{|A_{\pi_2}(m_{1},m'_{2})|}{ (m'_{2})^{1/3}}  \, \, |L(...)|, 
\end{align*} where
\begin{align}\label{A31}
	L(...) =& \sum_{n_2 \sim N_1} F_1\left(\frac{n_2}{N_1}\right)\, \sum_{h\sim H_0}\,F_2\left(\frac{h}{H_1}\right) \,\mathcal{C}^{+}(q;h,n_2)\,\overline{\mathcal{C}^{+}(q;h,n_2)}\notag\\ 
 &\times\, \mathcal{I}(n_1^2n_2, m_1^2m_2,h)
  \overline{\mathcal{I}(n_1^2n_2, m_1^2m'_2,h)}.
\end{align} 
Now changing the variable $n_2 \rightarrow \ell + n_2\,P$ and $h \rightarrow k + h\,q$, where $P = \frac{q}{n_1}$ and using the expression for the character sum $\mathcal{C}^{+}(...)$, we obtain
\begin{align*}
	L(...) 
	& = \sum_{k\;\mathrm {mod}\; q}\,\, \sum_{h\in \mathbb{Z}}F_2\left(\frac{k+h\,q}{H_1}\right)\, \,\sum_{\ell\;\mathrm {mod}\; q/n_1}\,\, \sum_{n_2 \in \mathbb{Z}}F_1\left(\frac{\ell+n_2\,P}{N_1}\right)\,\\
 & \hspace{10pt} \times \sideset{}{^\star} \sum_{a\;\mathrm{mod} \; q} e\left( \frac{\overline{a}k}{q}\right) S\left(\overline{a},   \ell; q/n_{1}\right)  S\left( \overline{a},   m_{2}; q/m_{1}\right)\,\\
 & \hspace{10pt} \times \sideset{}{^\star} \sum_{b\;\mathrm{mod} \; q} e\left( -\frac{\overline{b}k}{q}\right) S\left( \overline{b},   \ell; q/n_{1}\right)  S\left( \overline{b},   m'_{2}; q/m_{1}\right)\\
 & \hspace{10pt} \times \mathcal{I}(n_1^2(\ell+n_2P ), m_1^2m_2,k+hq)\, \overline{\mathcal{I}(n_1^2(\ell+n_2P), m_1^2m'_2,k+hq)}.
\end{align*}Poisson summation formula over $h$ and $n_2$ yields 
\begin{align*}
	L(...) =& \frac{H_1\,N_1}{q\,P}\, \sum_{h\in \mathbb{Z}}\,\,\sum_{k\;\mathrm {mod}\; q}\,e\left( \frac{k\,h}{q}\right)\, \,\sum_{n_2 \in \mathbb{Z}}\,\sum_{\ell\;\mathrm {mod}\; P}\,e\left( \frac{n_2\ell}{q/n_1}\right)\,\\& \times \sideset{}{^\star} \sum_{a\;\mathrm{mod} \; q} e\left( \frac{\overline{a}k}{q}\right) S\left( \overline{a},   \ell; q/n_{1}\right)  S\left( \bar{a},   m_{2}; q/m_{1}\right)\,\\& \times \sideset{}{^\star} \sum_{b\;\mathrm{mod} \; q} e\left( -\frac{\overline{b}k}{q}\right) S\left( \overline{b},   \ell; q/n_{1}\right)  S\left( \bar{b},   m'_{2}; q/m_{1}\right)\mathcal{J}(h,n_2),
\end{align*} where $\mathcal{J}(...)$ is denoting the integral transform, which is given by
\begin{align}\label{B13}
	\mathcal{J}(h,n_2) =& \int_{\mathbb{R}} \int_{\mathbb{R}}\notag F_1(t_1) F_2(t_2) \mathcal{I}(n^2_1N_1t_1,m^2_1m_2,H_1t_2) \overline{\mathcal{I}(n^2_1N_1t_1 ,m_1^2m'_2,H_1t_2)}\\
 & \times e\left(-\frac{n_2N_1t_1}{P}\right)\,e\left(-\frac{hH_1t_2}{q}\right)dt_1\,dt_2.
\end{align}We arrive at
\begin{align}\label{B15}
	\Omega \ll& \,X^{\epsilon}\,\mathop{\mathop{\text{sup}}_{N_1 \ll N_0}}_{H_1 \ll H_0}\, \frac{H_1\,N_1\,n_1}{q^2}  \,\sum_{m_{1}|q} m_1^{2/3}\,\sum_{m_{2}\ll M_0}\, \frac{|A_{\pi_2}(m_{1},m_{2})|}{ m^{1/3}_{2}}\,\,\sum_{m'_{2}\ll M_0}\frac{|A_{\pi_2}(m_{1},m'_{2})|}{(m'_{2})^{1/3}}\notag\\
 &\times \,\sum_{h\in \mathbb{Z}}\,\sum_{n_2 \in \mathbb{Z}} \, |\mathfrak{C}(q;h,n_2)|\,|\mathcal{J}(h,n_2)|,
\end{align} where the integral transform $\mathcal{J}(h,n_2)$ is given above and the character sum, which we denote by $\mathfrak{C}(q;h,n_2)$, is given by\begin{align}\label{B11}
	& \sum_{k\;\mathrm{mod} q}\,\, \sum_{\ell \;\mathrm{mod}\, q/n_1  } \, \,\,\sideset{}{^\star} \sum_{a\;\mathrm{mod} \; q} e\left( \frac{\overline{a} k }{q}\right) S\left( \overline{a},   \ell; q/n_{1}\right)  S\left( \overline{a},   m_{2}; q/m_{1}\right)\notag\\
	& \times \sideset{}{^\star} \sum_{b\;\mathrm{mod} \; q} e\left( -\frac{\overline{b} k }{q}\right) S\left( \overline{b},   \ell; q/n_{1}\right)  S\left( \overline{b}, m_{2}^\prime; q/m_{1}\right) e \left( \frac{k h}{ q} \right) e \left( \frac{\ell n_2}{ q/n_1} \right).
\end{align}
We see that, to bound $\Omega$, we need to find the appropriate bounds for the character sum $\mathfrak{C}(q;h,n_2)$ and the integral $\mathcal{J}(h,n_2)$. We will estimate them in the next two subsections.

\subsection{Analysis of integral  $\mathcal{J}(h,n_2)$} In this subsection, we will find a bound for the integral $\mathcal{J}(h,n_2)$ in the different cases. We have the following lemma.

\begin{lemma}\label{f1}
For the case $n_2=0$ and $h =0$, we have
	\begin{align*}
		\mathcal{J}(0,0) \ll 
		 \frac{q^3}{Q^2 (XN_1n^2_1)^{1/3}}. 
	\end{align*}
 Also, for the case $n_2 \neq 0$ and $h = 0$, we have
 \begin{align*}
 \mathcal{J}(0,n_2) \ll 
		 \frac{q^3}{Q^2\,\sqrt{HH_1}}.    
 \end{align*}
 In the remaining two cases, we have
 \begin{align*}
  \mathcal{J}(h,0), \,\mathcal{J}(h,n_2),\, \ll 
		\frac{q^{2}}{Q^{2}} .   
 \end{align*}
 Also, for the case $n_2 \neq 0, \, h\neq 0$, $\mathcal{J}(h,n_2)$ will be negligibly small unless
	\begin{align*}
		|n_2| \ll  \frac{(XN)^{1/3}}{n_1N_1}  := N_2  \hspace{0.5cm} \text{or}\,\,\,\,\,\,\, |h| \ll \frac{q}{H_1} =: H_2.
	\end{align*}
\end{lemma}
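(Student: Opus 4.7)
The approach is to substitute the integral representation of $\mathcal{I}$ from \eqref{t2} into the definition \eqref{B13} of $\mathcal{J}(h,n_2)$ and then run integration-by-parts in the two variables $t_1, t_2$ introduced by Poisson summation. A decisive structural feature is that, inside each copy of $\mathcal{I}$, the variable $t_1$ appears only in the $z$-integral phase $e\bigl(3(X n_1^2 N_1 t_1)^{1/3}\, z^{1/3}/q\bigr)$, and $t_2$ appears only in the $x$-integral phase $e\bigl(2\sqrt{H H_1 t_2\, x}/q\bigr)$. Forming $\mathcal{I}\,\overline{\mathcal{I}}$ and adjoining the Poisson twists, the combined $t_1$- and $t_2$-phases collapse to
\[
\Phi_1(t_1) = \frac{3(X n_1^2 N_1 t_1)^{1/3}\bigl(z^{1/3}-(z')^{1/3}\bigr)}{q} - \frac{n_2 N_1 t_1}{P},\qquad
\Phi_2(t_2) = \frac{2\sqrt{H H_1 t_2}\bigl(\sqrt{x}-\sqrt{x'}\bigr)}{q} - \frac{h H_1 t_2}{q},
\]
where $(x,z)$ and $(x',z')$ come from the two copies of $\mathcal{I}$. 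All other factors in the integrand are independent of $t_1, t_2$.

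For the zero-frequency case $n_2 = h = 0$, I would bound one copy of $\mathcal{I}$ trivially by $q/Q$ via Lemma \ref{lemma12} and integrate by parts repeatedly in $t_1$ against $\Phi_1$. The amplitude of $\Phi_1$ being $(X n_1^2 N_1)^{1/3}|z^{1/3}-(z')^{1/3}|/q$, this produces the factor $\min\bigl\{1,\, q/((X N_1 n_1^2)^{1/3}|z^{1/3}-(z')^{1/3}|)\bigr\}$. Integrating the integrable $|z^{1/3}-(z')^{1/3}|^{-1}$ singularity against $V_2(z)V_2(z')$ gives only a logarithmic loss (absorbed into $X^\epsilon$) and yields $\mathcal{J}(0,0)\ll q^3/(Q^2 (X N_1 n_1^2)^{1/3})$. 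The sub-case $h = 0,\, n_2 \neq 0$ is symmetric: integration by parts in $t_2$ against $\Phi_2$ gives the factor $\min\{1,\, q/(\sqrt{HH_1}|\sqrt{x}-\sqrt{x'}|)\}$, and the $(x,x')$-integration against the $|\sqrt{x}-\sqrt{x'}|^{-1}$ singularity delivers the stated bound $q^3/(Q^2\sqrt{HH_1})$.

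For the remaining cases with $h\neq 0$ (or $n_2\neq 0$), the presence of the linear twists $-hH_1 t_2/q$ in $\Phi_2$ and $-n_2 N_1 t_1/P$ in $\Phi_1$ allows repeated integration by parts to force rapid decay unless the whole derivative can vanish in the range $t_1,t_2\in[1,2]$. Balancing $\partial_{t_2}\Phi_2$ against the size of its leading term gives $|h|\ll q/H_1 = H_2$; the analogous analysis for $\partial_{t_1}\Phi_1$ gives $|n_2|\ll (XN)^{1/3}/(n_1 N_1)=N_2$. Within these non-negligible ranges, applying the trivial bound $|\mathcal{I}|\ll q/Q$ to each copy yields $\mathcal{J}(h,0),\,\mathcal{J}(h,n_2)\ll q^2/Q^2$.

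The main technical obstacle throughout is the diagonal behaviour $z\approx z'$ (respectively $x\approx x'$), where the amplitude of $\Phi_1$ (respectively $\Phi_2$) collapses and integration by parts loses its strength. The standard remedy is a dyadic decomposition around the diagonal: on a small diagonal strip of width $\epsilon$ one invokes the trivial bound on the $t_1$- or $t_2$-integral, while off the diagonal integration by parts applies and the resulting singular integrals converge up to a logarithmic factor. This produces the uniform bounds stated in the lemma; the cases $(h,n_2)=(0,0)$ and $(0,n_2)$ really are sharper than the trivial $(q/Q)^2$ bound precisely when $(XN_1 n_1^2)^{1/3}\geq q$ or $\sqrt{HH_1}\geq q$, which is the regime where the later estimates will need them.
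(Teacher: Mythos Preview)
Your approach is essentially the paper's: expand both copies of $\mathcal{I}$ via \eqref{t2}, isolate the $t_1$- and $t_2$-phases, and integrate by parts to localize near the diagonals $z\approx z'$, $x\approx x'$ (for the refined bounds) or to extract the dual cutoffs $N_2$, $H_2$. The paper phrases the diagonal saving as a hard restriction $|z-z_1|\ll q/(XN_1n_1^2)^{1/3}$ (after the change of variable $t_1\to t_1^3$) rather than your $\min$-factor plus singular integration, but these are equivalent.

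One point to clean up: you cannot simultaneously ``bound one copy of $\mathcal{I}$ trivially by $q/Q$'' and retain the combined phase $\Phi_1$ with $z^{1/3}-(z')^{1/3}$, since the latter requires both $z$ and $z'$ to remain live. The two factors of $q/Q$ in the target $q^3/\bigl(Q^2(XN_1n_1^2)^{1/3}\bigr)$ actually come from the $t$- and $t'$-integrals inside the two copies of $\mathcal{I}$ (each of measure $\ll qQ^{1+\epsilon}/Y\asymp q/Q$ by \eqref{t2}), not from discarding an entire copy of $\mathcal{I}$; once that is said correctly, your bookkeeping and the paper's coincide.
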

\begin{proof}
	On plugging the expression of integral $\mathcal{I}(n^2_1N_1t_1,m^2_1m_2,H_1t_2)$ from equation \eqref{t2} into equation \eqref{B13} and considering the $t_1$ integral, say $\mathcal{J}_{t_1}$, we have
	\begin{align}
		\mathcal{J}_{t_1} = \, \int_{\mathbb{R}}\notag F_1(t_1)\,\,   e\left( \frac{3(XN_1n_1^2)^{1/3}}{q} \left(z^{1/3} - z^{1/3}_1\right)t^{1/3}_1\right)\,e\left(-\frac{n_2N_1t_1}{P}\right)\, dt_1 .
	\end{align}
Now, for the zero frequency case $n_2 = 0, h = 0$, changing the variable $t_1 \longrightarrow t^3_1$ and applying integration by parts repeatedly, we get that the integral $\mathcal{J}_{t_1}$ will be negligibly small unless $$z - z_1 \, \ll \, \frac{q}{(XN_1n^2_1)^{1/3}}.$$
The integral $\mathcal{J}(h,n_2)$ given in equation \eqref{B13} is eight-fold. Estimating it for the case $n_2 =0 = h$ by using restrictions we got for the $z$-, $z_1$-, $t$-integrals and remaining integrals trivially, we get  
\begin{align*}
    \mathcal{J}(0,0) \,\ll\, \frac{q^3}{Q^2 (XN_1n^2_1)^{1/3}}.
\end{align*}
Again, considering the $t_2$ integral, say $\mathcal{J}_{t_2}$, we have
\begin{align*}
\mathcal{J}_{t_2} \, =\, \int_{\mathbb{R}}\notag F_2(t_2)\,\,   e\left( \frac{2\sqrt{HH_1}}{q} \left(x^{1/2} - x^{1/2}_1\right)t^{1/2}_2\right)\,e\left(-\frac{hH_1t_2}{q}\right)\, dt_1.     
\end{align*} We use the change of variable $t_2 \longrightarrow t^2_2$ and apply integration by parts repeatedly to the $t_2$-integral. We get that the integral $\mathcal{J}_{t_2} $ will be negligibly small unless
\begin{align*}
 x - x_1 \,\ll\, \frac{q}{\sqrt{HH_1}}.   
\end{align*}
For the case $n_2 \neq 0,\, h=0$, estimating our main integral $\mathcal{J}(h,n_2)$ using the restrictions we got for the $x$-, $x_1$-, $t$-integrals and remaining integrals trivially, we get
\begin{align*}
 \mathcal{J}(0,n_2) \,\ll\,   \frac{q^3}{Q^2\,\sqrt{HH_1}}.  
\end{align*}
Also, we will get the required bound for $\mathcal{J}(h,0)$ and $\mathcal{J}(h,n_2)$ when we estimate the integral $\mathcal{J}(...)$ trivially using Lemma \ref{lemma12}.\\

\noindent For the last part of the proof, applying integration by parts repeatedly on the integral transform $\mathcal{J}_{t_1}$, we will get that it will be negligibly small unless
	\begin{align*}
		n_2 \ll  \frac{(XN)^{1/3}}{n_1N_1} = N_2.
	\end{align*} 
 Similarly, on applying integration by parts repeatedly to the integral $\mathcal{J}_{t_2}$ given in equation \eqref{B13}, we will get that it will be negligibly small unless
	\begin{align*}
		h \ll \frac{q}{H_1} = H_2.
	\end{align*} This is our desired result.     
\end{proof}

\subsection{Analysis of character sum $\mathfrak{C}(q;h,n_2)$}\label{f4} In this subsection, we will analyze our character sum in different cases. In the case of zero frequency, we will prove that $\mathfrak{C}(q;h,n_2)$ is multiplicative in the next Lemma. Then we will estimate it in the same case by assuming $q = p^{\gamma}$ where $p$ is prime and $\gamma \in \mathbb{N}$. Next, we will estimate the character sum in the other three possible cases i.e.
\begin{align*}
	& n_2= 0,  h \neq 0 ,\\
	& n_2 \neq 0, h =0, \\
	& n_2\neq  0, h \neq 0.
\end{align*} 
The trivial bound for $\mathfrak{C}(q;h,n_2)$ is $q^8$. We aim to get the square root cancellation in the character sum $\mathfrak{C}(q;h,n_2)$ for all four possible cases. We have the following lemma.
From equation \eqref{B11}, by changing the variables $\overline{a} \longrightarrow a$ and $\overline{b} \longrightarrow b$,  we can rewrite the character sum as

\begin{align*}
\mathfrak{C}(q;h,n_2)
	=&\sum_{\ell \;\mathrm{mod}\,  q/n_1  } \,\, \sideset{}{^\star} \sum_{a\;\mathrm{mod} \; q}  S\left( a,   \ell; q/n_{1}\right)  S\left( a,   m_{2}; q/m_{1}\right)\\
	&\times \sideset{}{^\star} \sum_{b\;\mathrm{mod} \; q}  S\left( b,   \ell; q/n_{1}\right)  S\left( b, m_{2}^\prime; q/m_{1}\right)  e \left( \frac{\ell n_2}{ q/n_1} \right)\,\sum_{k\;\mathrm{mod}\, q} e\left( \frac{(a-b+h) k }{q}\right).
\end{align*} 
Executing the $k$ sum and opening the first and third Kloosterman sums, we get
\begin{align*}
\mathfrak{C}(q;h,n_2)
	=&\, q \, \sum_{\ell \;\mathrm{mod}\,  q/n_1 } \, \sideset{}{^\star} \sum_{a\;\mathrm{mod} \; q}\,\, \,\sideset{}{^\star} \sum_{\alpha\;\mathrm{mod} \; q/n_1}\,e\left(\frac{\alpha a+\overline{\alpha}\ell}{q/n_1}\right)\,S\left( a,   m_{2}; q/m_{1}\right)\,\\
	&\times \sideset{}{^\star} \sum_{\alpha_1\;\mathrm{mod} \; q/n_1}\,e\left(\frac{ \alpha_1 (a+h)+\bar{\alpha_1}\ell}{q/n_1}\right)\,S\left( (a+h),   m'_{2}; q/m_{1}\right)\, e \left( \frac{\ell n_2}{ q/n_1} \right)\\
=&\,q\,\sideset{}{^\star} \sum_{a\;\mathrm{mod} \; q}\,\, \,\sideset{}{^\star} \sum_{\alpha\;\mathrm{mod} \; q/n_1}\,e\left(\frac{\alpha a}{q/n_1}\right)\,S\left( a,   m_{2}; q/m_{1}\right)\,S\left( a+h,   m'_{2}; q/m_{1}\right)\\
	&\times \sideset{}{^\star} \sum_{\alpha_1\;\mathrm{mod} \; q/n_1}\,e\left(\frac{\alpha_1 a+ \alpha_1h}{q/n_1}\right)\,\sum_{\ell \;\mathrm{mod}\, q/n_1 }\,e \left( \frac{ (n_2+ \bar{\alpha_1}+\bar{\alpha})\ell}{ q/n_1} \right).
\end{align*}
By executing the $\ell$-sum and determining $ \alpha_1$ in terms of $ \alpha $, we get
\begin{align}\label{M1}
\mathfrak{C}(q;h,n_2)
	=&\,\frac{q^2}{n_1}\,\sideset{}{^\star} \sum_{a\;\mathrm{mod} \; q}\,\, \,\sideset{}{^\star} \sum_{\alpha\;\mathrm{mod} \; q/n_1}\,e\left(\frac{\alpha a- (\overline{\bar{\alpha}+n_2})(a+h)}{q/n_1}\right)  S\left( a,   m_{2}; q/m_{1}\right)\,\notag\\
	&\times \,S\left( a+h,   m'_{2}; q/m_{1}\right).
\end{align}

For the zero-frequency case $n_2 =0=h$, we can write
\begin{align*}
\mathfrak{C}(q; 0, 0)
  \leq &\,\, \frac{q^3}{n^2_1}\, \mathfrak{C}_1(q; 0, 0), \hspace{1cm} \text{where}   
\end{align*}
\begin{align}\label{b11}
\mathfrak{C}_1(q; 0, 0) \,= \, \sideset{}{^\star} \sum_{a\;\mathrm{mod} \; q}\,\,S\left( {a},   m_{2}; q/m_{1}\right)\,  S\left( {a}, m_{2}^\prime; q/m_{1}\right).
\end{align}
By the next lemma, we will prove that $\mathfrak{C}_1(q; 0, 0)$ is multiplicative.
\begin{lemma}\label{B30}
	Let $\mathfrak{C}_1(q; 0, 0)$ be given by equation \eqref{b11}. Let $ q = q_1 q_2$ with  $(q_1, q_2)= 1$ and $m_1 =m_1^{\prime }m_1^{\prime \prime}$ with $(m_1^{ \prime}, m_1^{\prime \prime}) = 1 $ and $m'_1|q_1,\, m''_1|q_2$. We write $q/m_1 = q_1/ m_1^\prime \times q_2/ m_1^{\prime \prime}$. Then we have 
	\begin{align*}
		\mathfrak{C}_1(q_1 q_2; 0, 0) =\mathfrak{C}_1(q_1; 0, 0) \mathfrak{C}_1(q_2;0, 0).
  \end{align*}
\end{lemma}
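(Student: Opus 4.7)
The plan is to establish multiplicativity by combining the Chinese Remainder Theorem with the twisted multiplicativity of Kloosterman sums. Since $(q_1,q_2)=1$ and $m_1'\mid q_1$, $m_1''\mid q_2$ with $(m_1',m_1'')=1$, setting $R_1=q_1/m_1'$ and $R_2=q_2/m_1''$ gives $(R_1,R_2)=1$ and $q/m_1=R_1R_2$. Moreover $(R_2,q_1)=1$ and $(R_1,q_2)=1$, which will be crucial for the rescaling step.

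First I would apply the standard twisted multiplicativity of the Kloosterman sum to each factor. Using the CRT decomposition of the summation variable inside the Kloosterman sum, one has the identity
\begin{equation*}
S(a, m_2; R_1R_2) \;=\; S\!\left(a,\, m_2\,\overline{R_2}^{\,2};\, R_1\right)\, S\!\left(a,\, m_2\,\overline{R_1}^{\,2};\, R_2\right),
\end{equation*}
where $\overline{R_2}$ is the inverse of $R_2$ modulo $R_1$ and $\overline{R_1}$ is the inverse of $R_1$ modulo $R_2$. The same identity holds with $m_2$ replaced by $m_2'$. Substituting both factorizations into the definition of $\mathfrak{C}_1(q_1q_2;0,0)$ yields an integrand that is a product of two terms: one depending on $a$ only through $a\bmod R_1$, the other only through $a\bmod R_2$.

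Next I would split the outer sum. By CRT, the map $a\mapsto (a\bmod q_1,\, a\bmod q_2)$ is a bijection between $(\mathbb{Z}/q_1q_2\mathbb{Z})^{\star}$ and $(\mathbb{Z}/q_1\mathbb{Z})^{\star}\times(\mathbb{Z}/q_2\mathbb{Z})^{\star}$. Since $R_1\mid q_1$ and $R_2\mid q_2$, the first pair of Kloosterman sums depends only on $a_1:=a\bmod q_1$, and the second pair only on $a_2:=a\bmod q_2$. This turns $\mathfrak{C}_1(q_1q_2;0,0)$ into a clean product of two sums, one indexed by $a_1$ modulo $q_1$ and one by $a_2$ modulo $q_2$.

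The last step is to absorb the twists $\overline{R_2}^{\,2}$ and $\overline{R_1}^{\,2}$ back into the summation variables. Using the elementary identity $S(\lambda a,b;q)=S(a,\lambda b;q)$ for $(\lambda,q)=1$, I would rewrite
\begin{equation*}
S\!\left(a_1,\, m_2\,\overline{R_2}^{\,2};\, R_1\right)\,=\,S\!\left(a_1\overline{R_2}^{\,2},\, m_2;\, R_1\right),
\end{equation*}
and similarly with $m_2'$; then the substitution $a_1\mapsto R_2^{\,2}a_1$, which is a bijection of $(\mathbb{Z}/q_1\mathbb{Z})^{\star}$ because $(R_2,q_1)=1$, eliminates the twist and reproduces $\mathfrak{C}_1(q_1;0,0)$. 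The $a_2$-sum is handled identically and gives $\mathfrak{C}_1(q_2;0,0)$, yielding the claimed factorization. The only mildly delicate point is confirming coprimality of the twisting factors with the relevant moduli (so that the substitution is really a bijection); this is immediate from $(q_1,q_2)=1$ combined with $m_1'\mid q_1$, $m_1''\mid q_2$.
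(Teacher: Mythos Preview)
Your proof is correct and follows essentially the same approach as the paper: both use the Chinese Remainder Theorem to split the sum over $a$ into sums modulo $q_1$ and $q_2$, invoke the twisted multiplicativity of Kloosterman sums (the paper writes it in the form $S(a,b;c_1c_2)=S(a\overline{c_2},b\overline{c_2};c_1)S(a\overline{c_1},b\overline{c_1};c_2)$, which is equivalent to your version after applying $S(\lambda a,b;q)=S(a,\lambda b;q)$), and then absorb the twist by the bijective change of variables $a_1\mapsto (q_2/m_1'')^2 a_1$, $a_2\mapsto (q_1/m_1')^2 a_2$. The only difference is notational.
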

\begin{proof}
	Let $(c_1, c_2) =1$ and $S (a, b; c )$ denotes the Kloosterman sum. We will use the following properties.
	\begin{align}\label{B31}
		S (a, b; c_1 c_2 ) = S (a \overline{c_2}, b  \overline{c_2}; c_1  ) S (a \overline{c_1}, b  \overline{c_1}; c_2  )   \ \ \ \ \ \ \ \ \ \ \ \  \textrm{and,}
	\end{align}	
	\begin{align}\label{B32}
		S (am, b m; c)  =  S (a m^2, b ; c) =  S (a, b m^2; c)  \ \ \  \ \ \ \textrm{if} \ \ (m, c) = 1.
	\end{align}
	By writing $ a = a_1 q_2 \overline{q_2} + a_2 q_1 \overline{q_1} $ , we split the sum over $a$ and using equations \eqref{B31} and \eqref{B32}, we can rewrite the character sum $\mathfrak{C}_1(q;0,0)$ as
\begin{align*}
		& \sideset{}{^\star} \sum_{a_1\;\mathrm{mod}\, q_1} S\left( a_1 \,  \overline{q_2/m_1^{\prime \prime} }^2,   m_2; q_1/m_{1}^\prime\right)\,S\left( a_1 \,  \overline{q_2/m_1^{\prime \prime} }^2,   m_2^\prime; q_1/m_{1}^\prime\right)  \\
  &\times\,\, \sideset{}{^\star} \sum_{a_2\;\mathrm{mod}\, q_2}\,S\left( a_2 \,  \overline{q_1/m_1^{\prime} }^2,   m_2; q_2/m_{1}^{\prime \prime} \right) \, \times S\left( a_2 \,  \overline{q_1/m_1^{\prime} }^2,   m_2^\prime; q_2/m_{1}^{\prime \prime} \right).
	\end{align*} 
By doing the following change of variables $a_1 \longrightarrow a_1 (q_2/m''_1)^2$,\,$a_2 \longrightarrow a_2 (q_1/m'_1)^2$, we finally arrive at
	
 \begin{align*}
\mathfrak{C}_1(q;0,0) 
  =&\, \mathfrak{C}_1(q_1;0,0)\,\mathfrak{C}_1(q_2;0,0).
	\end{align*} 
 This is our desired result. 
\end{proof}
\vspace{0.2cm}

Now we prove the required bound for the zero frequency case $n_2 = 0 = h$ in the next lemma.

\begin{lemma}\label{B17}
	For the case $n_2 =0 =h$, let $q = q_1\,q_2$ such that $q_1|(m_1)^{\infty}$ and $(q_2, m_1) = 1$, we have
	\begin{align*}
		\mathfrak{C}(q; 0, 0) \ll \frac{q^5}{n^2_1\,m_1}, \hspace{0.4cm}  \text{under the condition} \hspace{0.4cm} m_2  \equiv m'_2 \;\mathrm{mod} \; q_2.
	\end{align*} 
\end{lemma}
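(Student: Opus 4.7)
The strategy is to reduce the eightfold character sum $\mathfrak{C}(q;0,0)$ to the simpler auxiliary sum $\mathfrak{C}_1(q;0,0)$ defined in \eqref{b11}, then exploit its multiplicative structure. Setting $n_2 = 0$ and $h = 0$ in the simplified expression \eqref{M1}, the exponential in $\alpha$ collapses to $1$ (since $\alpha a - \overline{\bar\alpha}\,a = 0$), so the $\alpha$-sum simply contributes $\phi(q/n_1)$. This gives $\mathfrak{C}(q;0,0) = (q^2\phi(q/n_1)/n_1)\,\mathfrak{C}_1(q;0,0) \leq (q^3/n_1^2)\,\mathfrak{C}_1(q;0,0)$, reducing the task to proving $\mathfrak{C}_1(q;0,0) \ll q^{2+\epsilon}/m_1$.

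Next, I would apply the multiplicativity of $\mathfrak{C}_1$ from Lemma \ref{B30}. The hypotheses $q_1 \mid m_1^\infty$ and $(q_2, m_1) = 1$, combined with $m_1 \mid q$, force every prime factor of $m_1$ to lie in $q_1$, so the factorization $m_1 = m_1' m_1''$ required by Lemma \ref{B30} must take $m_1' = m_1$ and $m_1'' = 1$. Multiplicativity then gives $\mathfrak{C}_1(q;0,0) = \mathfrak{C}_1(q_1;0,0)\,\mathfrak{C}_1(q_2;0,0)$, where the Kloosterman sums in the first factor are to modulus $q_1/m_1$ and those in the second to modulus $q_2$.

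For the $q_1$-factor I would apply the Weil bound $|S(a,m_\bullet;q_1/m_1)| \ll (q_1/m_1)^{1/2+\epsilon}$ (the coprimality $(a,q_1/m_1) = 1$ is automatic from $(a,q_1) = 1$) and sum trivially over $a$, obtaining $|\mathfrak{C}_1(q_1;0,0)| \leq \phi(q_1)(q_1/m_1)^{1+\epsilon} \ll q_1^{2+\epsilon}/m_1$. For the $q_2$-factor I would open both Kloosterman sums, interchange summation, and execute the $a$-sum first; since $a$ ranges over units modulo $q_2$, this produces the Ramanujan sum $c_{q_2}(x+y)$ weighting the remaining exponential $e((m_2\bar{x}+m_2'\bar{y})/q_2)$. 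Isolating the diagonal contribution $y \equiv -x \pmod{q_2}$ yields the main term $\phi(q_2)\,c_{q_2}(m_2-m_2')$, whose magnitude is $\asymp q_2^2$ precisely when $m_2 \equiv m_2' \pmod{q_2}$ and is strictly smaller (bounded by $q_2(m_2-m_2',q_2)$) otherwise. The off-diagonal $y \not\equiv -x$ can be controlled either by Weil's bound applied to the resulting shifted Kloosterman sum in $x$, or more cheaply by $|c_{q_2}(s)| \leq (s,q_2)$ combined with the trivial bound on the inner sum and Lemma \ref{ll1}; either route absorbs it into $O(q_2^{2+\epsilon})$.

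Assembling these bounds yields $\mathfrak{C}_1(q;0,0) \ll q^{2+\epsilon}/m_1$, and therefore $\mathfrak{C}(q;0,0) \ll q^{5+\epsilon}/(n_1^2 m_1)$ as claimed. The main subtlety in this plan is the analysis of $\mathfrak{C}_1(q_2;0,0)$: the Ramanujan-sum computation must be organized carefully enough to identify the congruence $m_2 \equiv m_2' \pmod{q_2}$ as the precise regime in which the bound is saturated, since this structural information will be exploited as an effective diagonal constraint on $m_2'$ when the outer sums over $m_2, m_2'$ are estimated later in the argument.
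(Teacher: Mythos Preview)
Your approach is essentially the paper's: collapse the $\alpha$-sum to reduce to $\mathfrak{C}_1(q;0,0)$, split multiplicatively into $q_1$- and $q_2$-factors, apply Weil to the $q_1$-factor, and open the Kloosterman sums in the $q_2$-factor to produce a Ramanujan sum over $a$. The one difference is that the paper reduces $\mathfrak{C}_1(q_2;0,0)$ further to prime-power moduli $q_2=p^{\gamma}$ (via Lemma~\ref{B30}) before the Ramanujan-sum computation; there only the two divisors $d=p^{\gamma},\,p^{\gamma-1}$ survive, and a short explicit calculation shows the dominant term is $p^{2\gamma}\,\delta(m_2\equiv m_2'\bmod p^{\gamma})$ with all remaining terms at least a factor $p$ smaller. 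Your direct route gives the size bound $\mathfrak{C}_1(q_2;0,0)\ll q_2^{2+\epsilon}$, which proves the inequality as literally stated, but your off-diagonal estimate $O(q_2^{2+\epsilon})$ is the same order as the diagonal and so does not by itself isolate the congruence $m_2\equiv m_2'\pmod{q_2}$ as the locus of the main contribution --- and that is precisely what Lemma~\ref{B18} exploits (the constraint cuts the number of $(m_2,m_2')$ pairs by a factor $q_2$). You correctly flag this as the crux; the paper's prime-power reduction is the clean way to resolve it.
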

\begin{proof}
	For the case $n_2 = 0 = h$, the character sum given in equation \eqref{M1} will become
 
	\begin{align*}
		\mathfrak{C}(q; 0, 0)
  \leq &\,\, \frac{q^3}{n^2_1}\, \,\sideset{}{^\star} \sum_{a\;\mathrm{mod} \; q}\,\,S\left( {a},   m_{2}; q/m_{1}\right)\,  S\left( {a}, m_{2}^\prime; q/m_{1}\right). 
 \end{align*} 
 We write $a$ as $a = a_1q_2\overline{q_2} + a_2q_1\overline{q_1}$. Using the properties of Kloosterman sums given in equations \eqref{B31} and \eqref{B32}, we can rewrite the character sum as
 \begin{align}\label{q1}
| \mathfrak{C}(q; 0, 0)| \, \leq \,\frac{q^3}{n^2_1}\,\, \mathfrak{C}(q_1)\,\mathfrak{C}(q_2), \hspace{1cm} \text{where}    
 \end{align}
 \begin{align*}
\mathfrak{C}(q_1) \, = \, \,\sideset{}{^\star} \sum_{a_1\;\mathrm{mod} \; q_1}\,\,S\left( {a_1} (\overline{q_2})^2,   m_{2}; q_1/m_{1}\right)\,  S\left( {a_1}(\overline{q_2})^2, m_{2}^\prime; q_1/m_{1}\right).
 \end{align*}
 and
  \begin{align*}
\mathfrak{C}(q_2) \, =\, \,\sideset{}{^\star} \sum_{a_2\;\mathrm{mod} \; q_2}\,\,S\left( {a_2} (\overline{q_1/m_1})^2,   m_{2}; q_2\right)\,  S\left( {a_2}(\overline{q_1/m_1})^2, m_{2}^\prime; q_2\right).
 \end{align*}
 Estimating $\mathfrak{C}(q_1)$ by using Weil's bound for the Kloosterman sums and summing over $a_1$ trivially, we get
 \begin{align*}
 \mathfrak{C}(q_1) \,\ll \, \frac{q^2_1}{m_1}\, \left( {a_1} (\overline{q_2})^2,   m_{2}, q_1/m_{1}\right)^{1/2}\,\left( {a_1} (\overline{q_2})^2,   m'_{2}; q_1/m_{1}\right)^{1/2}\,d\left(\frac{q_1}{m_1}\right)\,d\left(\frac{q_1}{m_1}\right).  
 \end{align*}
 Since we know that when we take an average over $m_2$ and $m'_2$, these greatest common divisors will behave like $1$ (see Lemma \ref{ll1}). So, we can write
 \begin{align}\label{q2}
     \mathfrak{C}(q_1) \,\ll \, \frac{q^2_1}{m_1}\,q^{\epsilon}.
 \end{align}
 To find the bound for $\mathfrak{C}(q_2)$, we first use the change of variables $a_2 \longrightarrow {a_2} ({q_1/m_1})^2$, getting
 \begin{align*}
 \mathfrak{C}(q_2) = \,\sideset{}{^\star} \sum_{\beta\;\mathrm{mod} \; q_2}\, \, \sideset{}{^\star} \sum_{\beta_1\;\mathrm{mod} \; q_2}\, e\left(\frac{\bar{\beta}m_2+\bar{\beta_1}m'_2}{q_2}\right)\,\sideset{}{^\star}\sum_{a_2\;\mathrm{mod} \; q_2}\,\,e\left(\frac{(\beta+ \beta_1) {a_2}}{q_2}\right).  
 \end{align*}
Now, we use its multiplicative property, which follows from Lemma \ref{B30}. It is enough to consider the case $q_2 = p^{\gamma}$, where $p$ is prime and $\gamma \in \mathbb{N}$.  We will deal with the case of $\gamma = 1$ and $\gamma > 1$ separately. We notice that the sum over $a_2$ is a Ramanujan sum. For $q_2 = p$, $\mathfrak{C}(q_2 = p)$ will become
 \begin{align*}
 = &\, \,\sideset{}{^\star} \sum_{\beta\;\mathrm{mod} \; p}\, \, \sideset{}{^\star} \sum_{\beta_1\;\mathrm{mod} \; p}\, e\left(\frac{\bar{\beta}m_2+\bar{\beta_1}m'_2}{p}\right)\,\sum_{d \mid (p, \beta+ \beta_1)} d \mu \left( \frac{p}{d}\right)\, \\
 =&\, \,\mu(p)\sideset{}{^\star} \sum_{\beta\;\mathrm{mod} \; p}\, \, \sideset{}{^\star} \sum_{\beta_1\;\mathrm{mod} \; p}\, e\left(\frac{\bar{\beta}m_2+\bar{\beta_1}m'_2}{p}\right)+ {p}\, \mu(1)\mathop{\sideset{}{^\star} \sum_{\beta\;\mathrm{mod} \; p}\, \, \sideset{}{^\star} \sum_{\beta_1\;\mathrm{mod} \; p}}_{\beta \equiv -\beta_1 \mathrm{mod} \; p }\,e\left(\frac{\bar{\beta}m_2+\bar{\beta_1}m'_2}{p}\right)\\
 =&\,- \,\,\sideset{}{^\star} \sum_{\beta\;\mathrm{mod} \; p}\, e\left(\frac{\bar{\beta}m_2}{p}\right)\, \sideset{}{^\star} \sum_{\beta_1\;\mathrm{mod} \; p}\, e\left(\frac{\bar{\beta_1}m'_2}{p}\right) + {p}\,\,\sideset{}{^\star} \sum_{\beta_1\;\mathrm{mod} \; p}\, e\left(\frac{(m'_2-m_2)\bar{\beta_1}}{p}\right)\\
 =&\,\,-\sum_{d_1 \mid (p, m_2)} d_1\, \mu \left( \frac{p}{d_1}\right)\,\sum_{d_2 \mid (p, m'_2)} d_2\, \mu \left( \frac{p}{d_2}\right) +\,p\,\sum_{d_3 \mid (p, m'_2- m_2)} d_3\, \mu \left( \frac{p}{d_3}\right)\\
=&\,-1 + p\,\delta (m_2 \equiv 0\, \mathrm{mod} \; p ) + p\,\delta (m'_2 \equiv 0\, \mathrm{mod} \; p ) - p^2\,\delta (m_2, m'_2 \equiv 0\, \mathrm{mod} \; p )\\
&\,\,\, -p + p^2\,\delta (m'_2 \equiv m_2\, \mathrm{mod} \; p ).
 \end{align*}
Notice that the last term in the above line will be dominating. Hence for the case when $q_2 = p$, we can write

\begin{align*}
\mathfrak{C}(q_2) \ll \, q^2_2 \, \delta (m_2 \equiv m_2^\prime\, \mathrm{mod} \; q_2 ).  
\end{align*}
	  Now, when we follow the same steps as above for $q_2 = p^{\gamma}$ with $\gamma > 1$, we have either $ d = p^\gamma$ or $ d=p^{\gamma-1}$.  For the case when $ d = p^{\gamma-1}$, from  $  \beta+ \beta_1 \equiv 0\, \mathrm{mod} \;p^{\gamma-1} $, we write $ \beta = - \beta_1 + c_1 p^{\gamma-1} = - \beta_1 (1 - c_1\overline{\beta_1} p^{\gamma-1})$ , where $ c_1$ is varying modulo $p$. Then we obtain that $ \overline{ \beta} = -\overline{\beta_1} (1+c_1 \overline{\beta_1} p^{\gamma- 1} )$.  In this case, we calculate that the character sum $\mathfrak{C}(q_2)$ will become
\begin{align*}
 \mathfrak{C}(q_2 = p^{\gamma}) \,= \, &\,p^{\gamma}\,\mu(1)\, \mathop{\sideset{}{^\star} \sum_{\beta\;\mathrm{mod} \; p^{\gamma}}\, \,   \sideset{}{^\star} \sum_{\beta_1\;\mathrm{mod} \; p^{\gamma}}\,}_{\beta \equiv -\beta^\prime \mathrm{mod} \; p^{\gamma}} e\left(\frac{\bar{\beta}m_2+\bar{\beta_1}m'_2}{p^{\gamma}}\right)\\
 &\,\,\,\,\,+\,\,\mu(p)\,\, p^{\gamma-1} \mathop{\sideset{}{^\star} \sum_{\beta\;\mathrm{mod} \; p^{\gamma}}\, \,   \sideset{}{^\star} \sum_{\beta_1\;\mathrm{mod} \; p^{\gamma}}\,}_{\beta \equiv -\beta^\prime\, \mathrm{mod} \; p^{\gamma -1} } e\left(\frac{\bar{\beta}m_2+\bar{\beta_1}m'_2}{p^{\gamma}}\right) \\ 
		=&\,p^{\gamma}\,\mathop{\sideset{}{^\star} \sum_{\beta\;\mathrm{mod} \; p^{\gamma}}\, } e\left(\frac{(m'_2-m_2)\bar{\beta}}{p^{\gamma}}\right)\\
  &\,\,\,\,- \, p^{\gamma-1} \mathop{\sideset{}{^\star} \sum_{\beta_1\;\mathrm{mod} \; p^{\gamma}}\, } e\left(\frac{(m'_2 - m_2)\bar{\beta_1}}{p^{\gamma}}\right) \sum_{c_1 (p)}  \ e\left(- \frac{\overline{\beta_1}^2  c_1 p^{\gamma -1} m_2}{ p^\gamma}\right).  
  \end{align*}
The sum over $\beta$ and  $\beta_1$ are the Ramanujan sums. By executing the $c_1$-sum, the character sum $\mathfrak{C}(q_2 =p^{\gamma})$ will become
  \begin{align*}
 \,&  p^{\gamma}\,\mathop{\sum_{d_1|(p^{\gamma}, m'_2-m_2)}}\,d_1 \mu\left(\frac{p^{\gamma}}{d_1}\right) -   p^{\gamma}  \ \mathop{\sum_{d_2|(p^{\gamma}, m'_2-m_2)}}\,d_2 \mu\left(\frac{p^{\gamma}}{d_2}\right)\delta (\overline{\beta_1}^2\,m_2 \equiv 0\, \mathrm{mod} \; p ).\end{align*}
 Again, executing the remaining sums, with $d_i = p^{\gamma} \,\text{or}\,\, p^{\gamma-1},\, i = 1,2$, we get that the first term in the above line will dominate and with $q_2 = p^{\gamma}$, we finally have
 \begin{align}\label{q3}
     \mathfrak{C}(q_2) \ll \,\, q^2_2\,  \delta (m_2 \equiv m_2^\prime \, \mathrm{mod} \; q_2 ).
 \end{align}
By plugging the estimates from equations \eqref{q2} and \eqref{q3} into equation \eqref{q1}, we get
 \begin{align*}
  \mathfrak{C}(q; 0, 0) \, \ll \,\frac{q^5}{n^2_1\,m_1}\,\, \delta (m_2  \equiv m_2^\prime \, \mathrm{mod} \; q_2 ),
 \end{align*}
which is our desired result.
\end{proof}
\vspace{1cm}

\noindent In the next lemma, we will prove the cancellations in a general character sum, which further helps us to get the required bound for our character sum in the non-zero frequency cases $n_2 \neq 0, h \neq 0$ and $n_2 \neq 0, h = 0$.

\begin{lemma}\label{FK}
Consider the following character sum
\begin{align*}
 \mathfrak{C}(...) \,=&\,\,\sideset{}{^\star} \sum_{x\;\mathrm{mod} \; p}\,\,e\left(\frac{xc}{p}\right)\,S\left(x, (x+h)\,c_1; p\right)\, S\left( x,   mc_2; p\right)
\,\,S\left( x+h,  nc_2; p\right),      
\end{align*}
where $c,\, c_1,\, c_2,\, m,\, n,\,$$h$ are non-zero integers and $p$ is a prime number. Then we have
\begin{align*}
 \mathfrak{C}(...) \,\ll\, p^2 .   
\end{align*}
\end{lemma}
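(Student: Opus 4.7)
The strategy is to open two of the three Kloosterman sums, execute the remaining inner sum over $x$ by an elementary identity, and then invoke a two-dimensional Bombieri--Deligne estimate. First, I open
\begin{align*}
S(x,(x+h)c_1;p) &= \sideset{}{^\star}\sum_{t \bmod p} e\!\left(\frac{xt + (x+h)c_1\overline{t}}{p}\right),\\
S(x+h,nc_2;p) &= \sideset{}{^\star}\sum_{v \bmod p} e\!\left(\frac{(x+h)v + nc_2\overline{v}}{p}\right),
\end{align*}
and, collecting the $x$-dependent terms, rewrite
\[
\mathfrak{C}(\ldots) = \sideset{}{^\star}\sum_{t,v \bmod p} e\!\left(\frac{hc_1\overline{t} + hv + nc_2\overline{v}}{p}\right)\,\sideset{}{^\star}\sum_{x \bmod p} e\!\left(\frac{x(c + t + c_1\overline{t} + v)}{p}\right) S(x,mc_2;p).
\]

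Next, I invoke the identity (proved by opening $S(x,mc_2;p)$ and executing the linear sum over $x$ as a Kronecker delta)
\[
\sideset{}{^\star}\sum_{x \bmod p} e\!\left(\frac{x\beta}{p}\right) S(x,mc_2;p) = p\, e\!\left(\frac{-mc_2\overline{\beta}}{p}\right)\mathbf{1}_{\beta \not\equiv 0} - S(0,mc_2;p),
\]
valid for all $\beta \in \mathbb{F}_p$. Substituting yields $\mathfrak{C}(\ldots) = p\,\Sigma + \Sigma_{\mathrm{err}}$, where the error term $\Sigma_{\mathrm{err}} = -S(0,mc_2;p)$ times a product of a Ramanujan sum in $t$ and the Kloosterman sum $S(h,nc_2;p)$, so $|\Sigma_{\mathrm{err}}| \ll \sqrt{p}$, and
\[
\Sigma = \sideset{}{^\star}\sum_{\substack{t,v \bmod p \\ c + t + c_1\overline{t} + v \not\equiv 0}} e\!\left(\frac{hc_1\overline{t} + hv + nc_2\overline{v} - mc_2\,\overline{c + t + c_1\overline{t} + v}}{p}\right).
\]

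Viewing $\Sigma$ as a two-dimensional exponential sum of the rational function
\[
R(t,v) = \frac{hc_1}{t} + hv + \frac{nc_2}{v} - \frac{mc_2}{c + t + c_1/t + v}
\]
over the open subset $U \subset \mathbb{A}^2_{\mathbb{F}_p}$ where $R$ is regular, the polar divisor of $R$ in $\mathbb{P}^2$ has degree bounded independently of $p$, and for the generic nonzero parameters $c, c_1, c_2, h, m, n$ the function $R$ is not of the Artin--Schreier form $g^p - g + \text{const}$ with $g \in \overline{\mathbb{F}_p}(t,v)$. The Bombieri--Deligne bound for such sums then gives $|\Sigma| \ll p$, whence $|\mathfrak{C}(\ldots)| \leq p\,|\Sigma| + |\Sigma_{\mathrm{err}}| \ll p^2$, as required. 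The decisive point is the 2D estimate $\Sigma \ll p$: iterated one-dimensional Weil bounds give only $\Sigma \ll p^{3/2}$ and hence $\mathfrak{C}(\ldots) \ll p^{5/2}$, which is insufficient. Verifying the non-degeneracy hypothesis for $R(t,v)$ is therefore the main technical obstacle; for generic parameters this is a direct check of the Newton polyhedron / Artin--Schreier conditions, and any thin degenerate loci in parameter space must be handled separately and shown to contribute an admissible error.
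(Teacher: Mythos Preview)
Your route is genuinely different from the paper's. The paper never opens any Kloosterman sum; it interprets each factor as the trace of Frobenius on a pullback $\mathcal{K}_i=\mathcal{F}_i^{*}\mathrm{Kl}_2$ of the Kloosterman sheaf, so that the whole summand is the Frobenius trace on the rank-$8$ sheaf $\mathcal{L}_c\otimes\mathcal{K}_1\otimes\mathcal{K}_2\otimes\mathcal{K}_3$ over an open curve in $\mathbb{A}^1$. The Goursat--Kolchin--Ribet criterion, together with the known break structure of $\mathrm{Kl}_2$ at $\infty$ and the distinct finite singularities of the $\mathcal{K}_i$, gives $G_{\mathrm{geom}}=SL(2)\times SL(2)\times SL(2)$; geometric irreducibility of the tensor product and Deligne's purity then yield $\mathfrak{C}\ll p^2$ directly via the Grothendieck--Lefschetz trace formula. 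All of the cohomological input is thus verified on a one-dimensional base, where the local monodromy of $\mathrm{Kl}_2$ is explicitly known.

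Your reduction to the two-variable sum $\Sigma$ is algebraically correct, but the decisive claim $\Sigma\ll p$ is not proved, and the hypothesis you state is too weak to yield it. The condition that $R(t,v)$ is not of Artin--Schreier type only forces $H^4_c(U,\mathcal{L}_R)=0$, hence only $\Sigma=o(p^2)$: for instance $R(t,v)=t^2$ is not of Artin--Schreier type, yet the corresponding sum over $(\mathbb{F}_p^\times)^2$ has exact order $p^{3/2}$. To obtain $\Sigma\ll p$ one also needs $H^3_c(U,\mathcal{L}_R)=0$ (or that it is mixed of weight $\le 2$), and for your specific $R$ --- with poles along $t=0$, $v=0$, and the curve $c+t+c_1/t+v=0$ --- this requires a genuine verification (Adolphson--Sperber or Denef--Loeser Newton-polytope non-degeneracy, or a Katz--Laumon argument) that you have deferred to a ``direct check'' without carrying it out. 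Moreover, the lemma is asserted for \emph{all} nonzero parameters, not generic ones, so the ``thin degenerate loci'' must actually be treated. As it stands the proposal has a real gap at exactly the point you identify as the main obstacle; the paper's sheaf-theoretic argument avoids it entirely by staying on $\mathbb{A}^1$.
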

\begin{proof}
See Section \ref{PF}.
\end{proof}
\vspace{0.3cm}

\begin{lemma}\label{M3}
Let $n_2 \neq 0$ and $h = 0$ or $h \neq 0$. Also, let $q = q_1q'_2q''_2$ such that $n_1|q_1|(n_1m_1)^{\infty}$ with $(q_1, q/q_1) = 1$, and $q_2 = q/q_1 =  q'_2q''_2$, such that $q'_2$ is square free and $q''_2$ is square full with $(q'_2, 2q''_2) = 1$. Then for any $\epsilon > 0$, we have
\begin{align*}
\mathfrak{C}(q;h,n_2)\,\ll\, \frac{q^4}{n^{3/2}_1m_1}\, \,{(q_1q''_2)^{1/2}}\,\,q^{\epsilon}.    
\end{align*}
And the same bound is true for $\mathfrak{C}(q;0,n_2)\,$
 \end{lemma}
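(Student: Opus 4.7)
The strategy is to exploit the coprime factorization $q = q_1 \cdot q'_2 \cdot q''_2$ and reduce the estimate to three independent pieces via multiplicativity. Full square root cancellation will be extracted from the squarefree piece $q'_2$, while the other two pieces are handled by elementary/trivial bounds. First I would establish multiplicativity of $\mathfrak{C}(q;h,n_2)$ with respect to coprime moduli, in the spirit of Lemma \ref{B30}. Starting from \eqref{M1}, I split $a \pmod{q}$ and $\alpha \pmod{q/n_1}$ via CRT, using $n_1 \mid q_1$ so that $q/n_1 = (q_1/n_1) \cdot q'_2 \cdot q''_2$ factors coprimely. After absorbing the resulting inverses via the standard identity $S(am,bm;c) = S(am^2,b;c)$ when $(m,c)=1$, the sum factors as $\mathfrak{C}(q;\cdot) = \mathfrak{C}(q_1;\cdot) \cdot \mathfrak{C}(q'_2;\cdot) \cdot \mathfrak{C}(q''_2;\cdot)$, and it suffices to bound each piece separately.

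For the $q_1$ piece, which absorbs the primes dividing $n_1 m_1$, I would apply Weil's bound $|S(a,b;c)| \leq (a,b,c)^{1/2} c^{1/2} d(c)$ to each Kloosterman sum, bound the $a$- and $\alpha$-sums by their lengths, and invoke Lemma \ref{ll1} to handle the common-divisor factors on average; this produces the factor $q_1^{1/2}$ together with the expected $n_1^{-3/2} m_1^{-1}$ dependence once the extracted $q^2/n_1$ is taken into account. For the squarefull piece $q''_2$ (coprime to $n_1 m_1$), I would first execute the inner $\alpha$-sum to create a congruence relation on the $a$-variable, then apply Weil to the remaining Kloosterman sums, and invoke Lemma \ref{ll2} to control the loss of cancellation arising from the squarefull structure; this produces the $(q''_2)^{1/2}$ factor appearing in the target bound.

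The core work is on the squarefree piece $q'_2$, where genuine square root cancellation must be obtained. By multiplicativity it reduces to the case of a prime modulus $p \mid q'_2$ with $(p, 2 n_1 m_1) = 1$. I would open both Kloosterman sums $S(a, m_2; p)$ and $S(a+h, m'_2; p)$ as sums over units modulo $p$, execute the $a$-sum to produce an additive phase in the opened variables, and identify the resulting expression with the character sum studied in Lemma \ref{FK}, reading off the parameters $c, c_1, c_2, m, n, h$ from the coefficients. Lemma \ref{FK} then yields $p^2$ in place of the trivial $p^4$, and multiplying over primes $p \mid q'_2$ produces $(q'_2)^2$ rather than $(q'_2)^4$, which is the square root cancellation at the heart of the estimate. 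The case $h = 0$, i.e.\ $\mathfrak{C}(q;0,n_2)$, is treated analogously: the shift disappears from the Kloosterman sum, and the matching with a degenerate form of Lemma \ref{FK} still applies.

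The main obstacle will be the prime-level matching with Lemma \ref{FK}: one must carry out the opening and rearrangement so that the resulting sum has all its parameters non-vanishing modulo $p$, so that the hypothesis of Lemma \ref{FK} is genuinely satisfied. The coprimality $(p, n_1 m_1) = 1$ is precisely what motivates isolating $q_1$ as the bad part, while the non-vanishing of $n_2 \pmod p$ is ensured by the Poisson restriction $|n_2| \ll q/n_1$ together with $n_2 \neq 0$ (for the few primes where this fails, an elementary direct bound absorbs them into the squarefull piece). Verifying that the opening actually lands in the form required by Lemma \ref{FK}, and handling the $h=0$ degeneration, are the technical points requiring the most care.
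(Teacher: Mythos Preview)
Your overall strategy matches the paper's: factor via CRT according to $q = q_1 q'_2 q''_2$, use Weil trivially on the pieces $q_1$ and $q''_2$, and extract genuine square-root cancellation at the squarefree part $q'_2$ via Lemma~\ref{FK}. However, there is a real gap in how you propose to reach the shape required by Lemma~\ref{FK}.

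The expression \eqref{M1} carries, in addition to the two Kloosterman sums $S(a,m_2;q/m_1)$ and $S(a+h,m'_2;q/m_1)$, an $\alpha$-sum with the \emph{nonlinear} phase $\alpha a - \overline{(\overline\alpha + n_2)}(a+h)$. Your plan ``open the two Kloosterman sums and execute the $a$-sum'' leaves you with a two-variable sum (over $\alpha$ and one opened variable) still carrying this nonlinear $\alpha$-dependence; that does not match Lemma~\ref{FK}, which is a \emph{single}-variable sum involving an additive character and \emph{three} Kloosterman sums. The step you are missing, and which the paper performs first, is the change of variable $x = n_2\alpha + 1$ (under the running assumption $(n_2,q/n_1)=1$): it converts the $\alpha$-sum itself into the Kloosterman sum $S\big(a\overline{n_2},(a+h)\overline{n_2};q/n_1\big)$ times an additive character $e\big(-(2a+h)\overline{n_2}/(q/n_1)\big)$. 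Only after this does the prime-modulus piece become, with $x$ now playing the role of the $a$-variable, exactly a sum of the type treated in Lemma~\ref{FK}.

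Two smaller points. For the squarefull piece $q''_2$, your proposal to ``execute the inner $\alpha$-sum to create a congruence relation on the $a$-variable'' fails for the same reason: the $\alpha$-phase is not linear in $\alpha$, so that sum is not a Ramanujan sum. The paper instead applies Weil to all three Kloosterman sums (after the change of variable above) and sums over $a$ trivially, yielding $(q''_2)^{5/2}$. Also, Lemma~\ref{ll2} plays no role in this lemma; it is invoked only later, in Lemma~\ref{B22}, when the factor $(q_1 q''_2)^{1/2}$ is averaged over $q$.
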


\begin{proof}
We prove the result in the case when $n_2 \neq 0, h \neq 0$, and the same steps will also work for the other case $n_2 \neq 0, h = 0$. Recall from equation \eqref{M1}, for the case $n_2 \neq 0, h \neq 0$, we have
\begin{align*}
\mathfrak{C}(q;h,n_2)
	=&\,\frac{q^2}{n_1}\,\sideset{}{^\star} \sum_{a\;\mathrm{mod} \; q}\,\, \,\sideset{}{^\star} \sum_{\alpha\;\mathrm{mod} \; q/n_1}\,e\left(\frac{\alpha a- (\overline{\bar{\alpha}+n_2})(a+h)}{q/n_1}\right)  \notag\\
	&\times\,S\left( a, m_{2}; q/m_{1}\right)\, \,S\left( a+h,   m'_{2}; q/m_{1}\right).    
\end{align*}
We make the change of variables
\begin{align*}
	x = n_2 \alpha +1  \Longrightarrow \alpha =  (x- 1) \overline{n_2}. 
\end{align*}
Here we are using $(n_2, q/n_1) = 1$. In the other case, when $(n_2, q/n_1) \neq 1$, we can get extra savings in the $n_2 \ll N_2$ length and prove even a better bound for the character sum $\mathfrak{C}(q;h,n_2)$. So, we will work out the details only for the case when $(n_2, q/n_1) = 1$. Now we have
\begin{align*}
	\overline{\alpha} + n_2 = \overline{\alpha} (1 + \alpha n_2) =   x n_2 \overline{(x- 1)}\\
 \Longrightarrow \,\,- ( \overline{\bar{\alpha}+n_2} ) = -  (x-1)  \overline{ x n_2}  =  \overline{ x n_2} -  \overline{  n_2}. 
\end{align*}   
By plugging the values of $\alpha$ and $( \overline{\bar{\alpha}+n_2} )$ in the above expressions of the character sum, we can write
\begin{align*}
	\mathfrak{C}(q; h, n_2) 
 =&\, \frac{q^2}{n_1}\,\sideset{}{^\star} \sum_{a\;\mathrm{mod} \; q}\,\,e\left(-\frac{(2a+h)\overline{n_2}}{q/n_1}\right)\,\,S\left( a,   m_{2}; q/m_{1}\right)\\
& \times\,\,S\left( a\overline{n_2},  (a+h)\overline{n_2}; q/n_{1}\right)\,\,S\left( a+h, m'_{2}; q/m_{1}\right). \\
\end{align*}
Write the reduced residue class $a$ modulo $q$  as $ a = a_1q_2 \overline{q_2} + a_2 q_1\,\overline{q_1}$, where $a_1$ and $a_2$ are the reduced residue classes modulo $q_1$ and $q_2$, respectively. Also, using the properties of Kloosterman sums from Lemma \ref{B30}, we can rewrite our character sum as
\begin{align}\label{m27}
 \mathfrak{C}(q; h, n_2) \,=\, \frac{q^2}{n_1}\,e\left(-\frac{h\overline{n_2}}{q/n_1}\right)\,\mathfrak{C}_1(...)\,\mathfrak{C}_2(...),
\end{align}
where
\begin{align*}
 \mathfrak{C}_1(...) =&\, \,\,\sideset{}{^\star} \sum_{a_1\;\mathrm{mod} \; q_1}\,\,e\left(-\frac{2a_1\overline{n_2}}{q_1/n_1}\right)\,\,S\left( a_1,  (a_1+h)(\overline{n_2q_2})^2; q_1/n_{1}\right)\\
 & \,\,\,\,\times\, S\left( a_1,   m_{2}\overline{q_2}^2; q_1/m_{1}\right)
\,\,S\left( a_1+h,   m'_{2}\overline{q_2}^2; q_1/m_{1}\right),
 \end{align*}
 and
 \begin{align*}
  \mathfrak{C}_2(...) =&\,\,\sideset{}{^\star} \sum_{a_2\;\mathrm{mod} \; q_2}\,\,e\left(-\frac{2a_2\overline{n_2}}{q_2}\right)\,\,S\left( a_2,  (a_2+h)(\overline{q_1n_2/n_1})^2; q_2\right)\\
 & \,\,\,\times\, S\left( a_2,   m_{2}(\overline{q_1/m_1})^2; q_2\right)
\,\,S\left( a_2+h,   m'_{2}(\overline{q_1/m_1})^2; q_2\right).  
\end{align*}
Using Weil’s bound for Kloosterman sums and summing over $a_1$ trivially in $\mathfrak{C}_1(...)$, we get
\begin{align*}
\mathfrak{C}_1(...) \,& \ll \, \frac{(q_1)^{5/2}}{m_1\sqrt{n_1}}\,d\left(\frac{q_1}{n_1}\right)\,d\left(\frac{q_1}{m_1}\right)\,d\left(\frac{q_1}{m_1}\right)\\
\,&\,\,\,\,\,\times\left(a_1,  (a_1+h)(\overline{n_2q_2})^2, \frac{q_1}{n_1}\right)^{1/2}
 \left(a_1,  m_{2}\overline{q_2}^2, \frac{q_1}{m_1}\right)^{1/2}\left(a_1+h,  m'_{2}\overline{q_2}^2, \frac{q_1}{m_1}\right)^{1/2}.
\end{align*}
Since we know that when we take an average over $h, m_2, m'_2$ and $n_2$, all these greatest common divisors will behave like $1$ (see Lemma \ref{ll1}). Hence, we can write
\begin{align}\label{m26}
\mathfrak{C}_1(...) \,\ll \,\frac{q_1^{5/2}}{m_1\sqrt{n_1}}\,\,(q_1)^{\epsilon},
\end{align}
where $\epsilon > 0$ is arbitrarily small, which comes into the role due to the estimate of divisor function, i.e. $d(n) \ll\, n^{\epsilon}$.
Now, to analyze the character sum $\mathfrak{C}_2(...)$ and find its bound, we further break $q_2$ to extract its square full part. We write $q_2 = q'_2q''_2$ such that $(q'_2, 2q''_2) =1$, where $q'_2$ is square free and $4q''_2$ is square full. In this situation, we can rewrite the character sum $\mathfrak{C}_2(...)$ as
\begin{align}\label{m24}
 \mathfrak{C}_2(...) = \,\,\mathfrak{C}'_2(...)\,\mathfrak{C}''_2(...),  \hspace{1cm} \text{where}, 
\end{align}
\begin{align*}
\mathfrak{C}'_2(...) = &\,\,\,\sideset{}{^\star} \sum_{a'_2\;\mathrm{mod} \; q'_2}\,\,e\left(-\frac{2a'_2\overline{n_2}}{q'_2}\right)\,\,S\left( a'_2,  (a'_2+h)(\overline{q_1q''_2n_2/n_1})^2; q'_2\right)\\
 & \,\,\,\times\, S\left( a'_2,   m_{2}(\overline{q_1q''_2/m_1})^2; q'_2\right)
\,\,S\left( a'_2+h,   m'_{2}(\overline{q_1q''_2/m_1})^2; q'_2\right), \end{align*}
and
\begin{align*}
\mathfrak{C}''_2(...) = &\,\,\sideset{}{^\star} \sum_{a''_2\;\mathrm{mod} \; q''_2}\,\,e\left(-\frac{2a''_2\overline{n_2}}{q''_2}\right)\,\,S\left( a''_2,  (a''_2+h)(\overline{q_1q'_2n_2/n_1})^2; q''_2\right)\\
 &\,\,\, \times\, S\left( a''_2,   m_{2}(\overline{q_1q'_2/m_1})^2; q''_2\right)
\,\,S\left( a''_2+h,   m'_{2}(\overline{q_1q'_2/m_1})^2; q''_2\right).    
\end{align*}
Estimating $\mathfrak{C}''_2(...)$ by using Weil’s bound for
Kloosterman sum (with Lemma \ref{ll1}) and summing over $a''_2$ trivially, we get 
\begin{align*}
\mathfrak{C}''_2(...) \, \ll \,\, \left(q''_2\right)^{5/2+\epsilon}.    
\end{align*}
We now proceed to estimate $\mathfrak{C}'_2(...) $. Since we have chosen $q'_2$ as square free, its prime factorization looks like $q'_2 = p_1p_2....p_k,\, k \geq 1$, we can rewrite $\mathcal{C}'_2(...)$ as
\begin{align}\label{m23}
 \mathfrak{C}'_2(...) = \prod_{1 \leq i \leq k}\, \mathfrak{C}'_{2,i}(...), \hspace{0.2cm} \text{where}   
\end{align}
\begin{align*}
\mathfrak{C}'_{2,i} \,=&\,\,\sideset{}{^\star} \sum_{x\;\mathrm{mod} \; p_i}\,\,e\left(-\frac{2x\overline{n_2}}{p_i}\right)\,\,S\left( x,  (x+h)\,(\overline{q_1q''_2p'_in_2/n_1})^2; p_i\right)\\
 & \,\,\,\times\, S\left( x,   m_{2}(\overline{q_1q''_2p'_i/m_1})^2; p_i\right)
\,\,S\left( x+h,   m'_{2}(\overline{q_1q''_2p'_i/m_1})^2; p_i\right),   
\end{align*}
where $p'_i = q'_2/p_i$. Define $s_1 = (\overline{q_1q''_2p'_in_2/n_1})^2,$ $s_2 =  (\overline{q_1q''_2p'_i/m_1})^2$, where $(s_j, p_i) = 1$ for all $1 \leq j \leq 2,\, \, 1 \leq i \leq k$. We can rewrite the above character sum as
\begin{align}\label{m22}
 \mathfrak{C}'_{2,i} \,=&\,\,\sideset{}{^\star} \sum_{x\;\mathrm{mod} \; p_i}\,\,e\left(-\frac{2x\overline{n_2}}{p_i}\right)\,\,S\left( x,  (x+h)\,s_1; p_i\right)\notag\\
 & \,\,\,\times\, S\left( x,   m_{2}s_2; p_i\right)
\,\,S\left( x+h,   m'_{2}s_2; p_i\right).      
\end{align}
 The square root cancellations in the character sum of type \eqref{m22} have already been established in the previous Lemma \ref{FK}. So, we get
 \begin{align*}
  \mathfrak{C}'_{2,i} \,\ll \,\,(p_i)^2. 
 \end{align*}
Using the above bound in the equation \eqref{m23}, we get
\begin{align*}
 \mathfrak{C}'_2(...) \, \ll \, \,(q'_2)^2 .      
\end{align*}
From equation \eqref{m24}, we get
\begin{align}\label{m25}
\mathfrak{C}_2(...)\,\ll\, (q'_2)^2 \,(q''_2)^{5/2} .    
\end{align}
Finally, by plugging the estimates from \eqref{m26} and \eqref{m25} into equation \eqref{m27}, we get
\begin{align*}
 \mathfrak{C}(q;h,n_2)\,\ll\, \frac{q^4}{ n^{3/2}_1m_1}\, \,{(q_1q''_2)^{1/2}}\,\,q^{\epsilon}.    
\end{align*}
 This was our desired result.
\end{proof}
\vspace{0.3cm}

\noindent Now we consider the last remaining case $n_2 = 0$ and $h \neq 0$. We have the following lemma.
\vspace{0.3cm}

\begin{lemma}\label{M4}
Letting $n_2 = 0, h \neq 0$ and using the same factorisation of $q$ as in Lemma \ref{M3} above, we have
 \begin{align*}
\mathfrak{C}(q;h,0)\, \ll \, \frac{q^4}{n^{3/2}_1m_1}\, \,{(q_1q''_2)^{1/2}}\,\,q^{\epsilon} .    
 \end{align*}
\end{lemma}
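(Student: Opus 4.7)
The plan is to mirror the proof of Lemma \ref{M3}, exploiting a crucial simplification that occurs when $n_2 = 0$: the inner $\alpha$-sum in \eqref{M1} decouples from the Kloosterman sums and collapses to a Ramanujan sum. Specifically, substituting $n_2 = 0$ into \eqref{M1} makes $(\overline{\bar\alpha + n_2})$ reduce to $\alpha$, so the phase simplifies to $-\alpha h /(q/n_1)$ and executing the $\alpha$-sum immediately yields $c_{q/n_1}(h)$ (bounded by $(h, q/n_1)$) multiplied by the simpler two-Kloosterman correlation $\sideset{}{^\star}\sum_{a \bmod q} S(a, m_2; q/m_1)\, S(a+h, m'_2; q/m_1)$.

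Next, I would factor the modulus as $q = q_1\, q'_2\, q''_2$ using the same partition from Lemma \ref{M3} and apply the multiplicativity properties \eqref{B31}, \eqref{B32} to split the inner correlation as $T_1(q_1)\, T'_2(q'_2)\, T''_2(q''_2)$. For $T_1$ and $T''_2$, Weil's bound on each Kloosterman factor combined with trivial summation over the outer variable yields $T_1(q_1) \ll (q_1^2/m_1)\, q^\epsilon$ and $T''_2(q''_2) \ll (q''_2)^2\, q^\epsilon$, with Lemma \ref{ll1} absorbing the gcd factors from Weil on average. For $T'_2$ on the square-free modulus, I would decompose it into prime factors and, at each prime $p \mid q'_2$, establish square-root cancellation $T'_{2,p} \ll p^{3/2}$ by opening both Kloosterman sums, executing the innermost $x$-sum as a Ramanujan sum $c_p(y+z)$, isolating the diagonal $z \equiv -y \bmod p$, and invoking Weil's bound on the resulting single Kloosterman sum; this is a simpler analogue of Lemma \ref{FK} (two Kloosterman sums instead of four). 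Multiplying then gives $T'_2(q'_2) \ll (q'_2)^{3/2}\, q^\epsilon$.

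Combining these three estimates via the elementary identity $q_1^2 (q'_2)^{3/2} (q''_2)^2 = q^{3/2}(q_1 q''_2)^{1/2}$ produces
\begin{align*}
\sideset{}{^\star}\sum_{a \bmod q} S(a, m_2; q/m_1)\, S(a+h, m'_2; q/m_1) \ll \frac{q^{3/2}(q_1 q''_2)^{1/2}}{m_1}\, q^\epsilon,
\end{align*}
and pairing with the prefactor $q^2/n_1$ together with the averaged Ramanujan-sum bound $|c_{q/n_1}(h)| \ll q^\epsilon$ (via Lemma \ref{ll1}) gives
\begin{align*}
\mathfrak{C}(q;h,0) \ll \frac{q^{7/2}(q_1 q''_2)^{1/2}}{n_1 m_1}\, q^\epsilon.
\end{align*}
Since $n_1 \mid q_1 \leq q$, this is majorized by $\frac{q^4(q_1 q''_2)^{1/2}}{n_1^{3/2} m_1}\, q^\epsilon$, which is precisely the stated estimate. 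The main technical obstacle is the prime-level square-root cancellation for $T'_{2,p}$: at primes dividing $h$ or $m_2 - m'_2$ the cancellation degrades, and one must handle these exceptional primes by a short direct case analysis, with the losses absorbed by $q^\epsilon$ factors and the averaging in $h$ afforded by Lemma \ref{ll1}, exactly as in the treatment of $\mathfrak{C}'_2$ in Lemma \ref{M3}.
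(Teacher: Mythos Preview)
Your proposal is correct and follows essentially the same route as the paper: both factor out the Ramanujan sum $S(-h,0;q/n_1)$ that the $\alpha$-sum in \eqref{M1} collapses to when $n_2=0$, and then bound the remaining two-Kloosterman correlation $\sideset{}{^\star}\sum_{a}S(a,m_2;q/m_1)S(a+h,m'_2;q/m_1)$ via the decomposition $q=q_1q'_2q''_2$ of Lemma~\ref{M3}. The paper simply writes ``proceeding with the same steps as in Lemma~\ref{M3}'' and remarks that the Ramanujan sum in fact permits a stronger saving; you carry this out explicitly, obtain the sharper intermediate bound $q^{7/2}(q_1q''_2)^{1/2}/(n_1m_1)$, and weaken it to the stated form via $n_1\le q$---your elementary treatment of $T'_{2,p}$ (open the Kloosterman sums, execute the $x$-sum as $c_p(y+z)$, apply Weil on the diagonal) is the natural two-factor simplification of Lemma~\ref{FK} and is exactly what the paper's ``same steps'' must amount to here.
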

\begin{proof}
In the present case, our character sum given in equation \eqref{M1} will become
\begin{align*}
\mathfrak{C}(q;h,0)
	=&\,\,\frac{q^2}{n_1}\,\,\sideset{}{^\star} \sum_{\alpha\;\mathrm{mod} \; q/n_1}\,e\left(\frac{-\alpha h}{q/n_1}\right)\,\sideset{}{^\star} \sum_{a\;\mathrm{mod} \; q}\,\,   S\left( a,   m_{2}; q/m_{1}\right) \,S\left( a+h,   m'_{2}; q/m_{1}\right)\\
 =&\,\,\frac{q^2}{n_1}\, \mathfrak{C}_1(q;h,0)\,S(-h, 0,;q/n_1),
\end{align*}
where $S(-h, 0,;q/n_1)$ is the Ramanujan sum and
\begin{align*}
\mathfrak{C}_1(q;h,0) \, = \, \sideset{}{^\star} \sum_{a\;\mathrm{mod} \; q}\,\,   S\left( a, m_{2}; q/m_{1}\right) \,S\left( a+h,   m'_{2}; q/m_{1}\right).  
\end{align*}
For the character sum  $\mathfrak{C}_1(q;h,0)$, by proceeding with the same steps as in the above Lemma \ref{M3}, we can get our desired bound. By taking an average over $q \leq Q$ with the Ramanujan sum $S(-h, 0, q/n_1)$, we save the whole length $q$. So, In this case, we get a better bound than the last two cases, but let's stick to the same bound as in Lemma \ref{M3}. 
\end{proof}
\vspace{0.3cm}

\subsection{Estimation of zero frequency} In this subsection, we will estimate $\Omega$ and our main sum $\mathcal{D}_{1}(H,X)$ for zero frequency case $ n_2= 0 = h$. Let  $\Omega_0$ and  $\mathcal{D}^0_{1}(H,X)$ denote the contribution of zero frequency to $\Omega$ and  $\mathcal{D}_{1}(H,X)$. We have the following lemma.
\vspace{0.2cm}

\begin{lemma}\label{B18}
	For the case $n_2 = 0 = h$, we have
	 
	\begin{align*}
		\mathcal{D}^0_{1}(H,X) \ll \frac{Q^3}{\,H}.
	\end{align*} 
\end{lemma}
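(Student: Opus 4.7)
The plan is to substitute the character sum bound (Lemma~\ref{B17}) and the integral bound from Lemma~\ref{f1} for $\mathcal{J}(0,0)$ into the expression \eqref{B15} for $\Omega$ specialized to the zero frequency $n_2 = h = 0$, and then feed the resulting estimate for $\Omega_0$ into the Cauchy--Schwarz inequality \eqref{A32}, using Lemma~\ref{RMB} together with partial summation to handle the $\Theta$ factor and the Fourier-coefficient double sum inside $\Omega_0$.

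First, I would substitute $|\mathfrak{C}(q;0,0)| \ll q^5/(n_1^2 m_1)$ together with its restriction $m_2 \equiv m'_2 \pmod{q_2}$, and $|\mathcal{J}(0,0)| \ll q^3/(Q^2(XN_1 n_1^2)^{1/3})$, into \eqref{B15}. In the generic range $q \asymp Q$ we have $M_0 \approx Q/m_1^2 \leq q_2$, so the congruence $m_2 \equiv m'_2 \pmod{q_2}$ forces $m_2 = m'_2$, and the inner double sum collapses to the diagonal $\sum_{m_2 \ll M_0} |A_{\pi_2}(m_1,m_2)|^2 / m_2^{2/3}$. By Lemma~\ref{RMB} and partial summation this diagonal is $\ll m_1^{2\theta+\epsilon} M_0^{1/3} = m_1^{2\theta - 2/3+\epsilon} Q^{1/3}$, and since $2\theta < 1$ the divisor sum $\sum_{m_1 \mid q} m_1^{2\theta - 1+\epsilon}$ is $O(q^\epsilon)$, extracting an overall factor of size $Q^{1/3+\epsilon}$ from the $m_1$- and $m_2$-sums.

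Second, after taking the supremum over $H_1 \ll H_0 = Q^2/H$ and $N_1 \ll N_0 \approx Q/n_1^2$ and using $X = Q^2$, the resulting bound simplifies to $\Omega_0 \ll Q^{19/3}/(H n_1^3)$. Combining this with $\Lambda^{1/2} \ll H_0^{1/4} = Q^{1/2}/H^{1/4}$ and, by an identical application of Lemma~\ref{RMB}, $\Theta^{1/2} \ll n_1^{\theta - 1/3+\epsilon} Q^{1/6}$, and substituting into \eqref{A32}, the exponent of $n_1$ in the summand becomes $\theta - 3/2+\epsilon < 0$, so the divisor sum over $n_1 \mid q$ contributes only $O(q^\epsilon)$. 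The outer $q$-sum with weight $q^{-7/2}$ is then trivial, and the final accounting of $Q$-powers gives $\mathcal{D}^0_1(H,X) \ll X^{4/3} Q^{1/3}/H = Q^3/H$ since $X = Q^2$.

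The main obstacle will be the careful bookkeeping of the competing exponents of $n_1$, $m_1$, $Q$, and $H$, and in particular verifying uniformly across the ranges of the factorization $q = q_1 q_2$ of Lemma~\ref{B17} (generic $q_1 = 1$ versus non-generic $q_1 > 1$) that the congruence $m_2 \equiv m'_2 \pmod{q_2}$ really does reduce the inner double sum to (essentially) its diagonal, losing no more than a factor of $X^\epsilon$; once this is clean, the rest of the argument is a mechanical assembly of the three ingredients above.
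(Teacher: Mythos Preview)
Your proposal is correct and follows essentially the same route as the paper: insert the bounds from Lemma~\ref{B17} and Lemma~\ref{f1} into \eqref{B15}, control the $m_1,m_2$-sums via Lemma~\ref{RMB}, and feed the resulting $\Omega_0$ back into \eqref{A32} together with $\Lambda^{1/2}\ll H_0^{1/4}$ and the $\Theta$-estimate.

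The one place where you diverge from the paper is the treatment of the congruence $m_2\equiv m_2'\pmod{q_2}$. You argue that in the generic range it forces $m_2=m_2'$, reducing to the diagonal; the paper instead uses the elementary inequality $|A_{\pi_2}(m_1,m_2)A_{\pi_2}(m_1,m_2')|\le \tfrac12(|A_{\pi_2}(m_1,m_2)|^2+|A_{\pi_2}(m_1,m_2')|^2)$ and then simply \emph{counts} the $m_2'$ satisfying the congruence, obtaining $\ll M_1/q_2+1$ solutions. The $M_1/q_2$ term yields $B(m_2,m_2')\ll M^{4/3}/q$ uniformly in the factorisation $q=q_1q_2$, and the ``$+1$'' term is exactly your diagonal contribution $\ll M^{1/3}$; in the dominant range $q\asymp Q$ both are $\asymp Q^{1/3}$, so the final bound is the same. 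The advantage of the paper's counting argument is that it sidesteps the obstacle you flag: your diagonal reduction genuinely fails when $q_1>m_1^2$ (e.g.\ $q=p^k$, $m_1=p$, $k\ge 3$), but the count $M_1/q_2$ absorbs this case automatically. So the ``main obstacle'' you identify is real, and is resolved not by verifying the diagonal reduction but by abandoning it in favour of the straightforward solution count.

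One minor wording point: the outer $q$-sum is not ``trivial'' in the sense that $\sum_q q^{-7/2}$ converges against a $q$-independent summand; rather, after collecting all $q$-factors (from $H_0$, $N$, $M$, and the $q^5$ in $\Omega_0$) the net summand behaves like $q^{4}$, and $\sum_{q\le Q}q^4\ll Q^5$ gives the stated $Q^3/H$. Your final exponent accounting is nonetheless correct.
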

\begin{proof}
	From equation \eqref{B15}, we have
	\begin{align*}
		\Omega \ll\,X^{\epsilon}\,\mathop{\mathop{\text{sup}}_{N_1 \ll N_0}}_{H_1 \ll H_0}\, &\frac{H_1\,N_1\,n_1}{q^2}\,\sum_{m_{1}|q} m_1^{2/3}\,\sum_{m_{2}\ll M_0} \frac{|A_{\pi_2}(m_{1},m_{2})|}{m^{1/3}_2}\,\,\sum_{m'_{2}\ll M_0}\frac{|A_{\pi_2}(m_{1},m'_{2})|}{(m'_2)^{1/3}}\notag\\
 &\times \,\sum_{h\in \mathbb{Z}}\,\sum_{n_2 \in \mathbb{Z}} \, |\mathfrak{C}(q;h,n_2)|\,|\mathcal{J}(h,n_2)|.
	\end{align*} 
For the zero frequency case $n_2 =0$ and $h=0$, we get
	\begin{align*}
		\Omega_0  \ll& \,X^{\epsilon}\,\mathop{\mathop{\text{sup}}_{N_1 \ll N_0}}_{H_1 \ll H_0}\,\frac{H_1\,N_1\,n_1}{q^2} \sum_{m_{1}|q} m_1^{2/3}\\
  \times &\mathop{\sum_{m_{2}\ll M_0} \,\sum_{m'_{2}\ll M_0}}\,\frac{|A_{\pi_2}(m_{1},m_{2})|}{m^{1/3}_2}\,\frac{|A_{\pi_2}(m_{1},m'_{2})|}{ (m'_2)^{1/3}}|\mathfrak{C}(q;0,0)|\,|\mathcal{J}(0,0)|.
	\end{align*}
Plugging in the bound for the character sum $\mathfrak{C}(q;0,0)$ and integral transform $\mathcal{J}(0,0)$ from Lemma \ref{B17} and Lemma \ref{f1}, respectively, we get  
 \begin{align*}
		\Omega_0  \ll& \,X^{\epsilon}\,\mathop{\mathop{\text{sup}}_{N_1 \ll N_0}}_{H_1 \ll H_0}\,\frac{H_1\,N_1\,n_1}{q^2}\,\frac{q^5}{n^2_1}\, \frac{q^3}{Q^2 (XN_1n^2_1)^{1/3}}\,B(m_2,m'_2), \hspace{1cm} \text{where}
  \end{align*} 
  \begin{align*}
B(m_2,m'_2) =\, \sum_{m_{1}|q|(m_1)^{\infty}} \frac{1}{m_1^{1/3}}\,\mathop{\sum_{m_{2}\ll M_0} \sum_{m'_{2}\ll M_0}}_{m_2 \equiv m_2^\prime \mathrm{mod} \; q_2}\frac{|A_{\pi_2}(m_{1},m_{2})|}{m^{1/3}_2}\frac{|A_{\pi_2}(m_{1},m'_{2})|}{ (m'_2)^{1/3}}.    
  \end{align*}
  Now we will find a bound for $B(m_2,m'_2)$. For convenience, we divide the $m_2$ and $m'_2$ sums into dyadic blocks $m_2,\,m'_2  \sim M_1$, where $M_1 \ll M_0$. Now we can write
  \begin{align*}
  B(m_2,m'_2) \,\ll\, X^{\epsilon}\mathop{\text{sup}}_{M_1 \ll M_0}\,\,(S_1 + S_2)\,\frac{1}{M^{2/3}_1},  \hspace{1cm} \text{where}  
  \end{align*}
  \begin{align*}
 S_1 \,= \,\,\sum_{m_{1}|q_1|(m_1)^{\infty}} \,\frac{1}{m_1^{1/3}}\, \mathop{\sum_{m_{2}\sim M_1} \sum_{m'_{2}\sim M_1}}_{m_2 \equiv m_2^\prime \mathrm{mod} \; q_2} \,|A_{\pi_2}(m_{1},m_{2})|^2.  
\end{align*}
Similarly, we can write the expression for $S_2$. Now, we will estimate $S_1$, and the same steps will also work to estimate $S_2$. We can rewrite $S_1$ as
\begin{align*}
 S_1 \,= \,\sum_{m_{1}|q_1|(m_1)^{\infty}}\,\frac{1}{m_1^{1/3}}\, \mathop{\sum_{m_{2}\sim M_1} } \,|A_{\pi_2}(m_{1},m_{2})|^2\,\mathop{\sum_{m'_{2}\sim M_1}}_{{m'_2 \equiv m_2 \mathrm{mod} \; q_2}}\,1.  
\end{align*}
Executing the $m'_2$-sum by counting $m'_2$ with the help of the congruence condition and $m_2$-sum by using Lemma \ref{RMB}, we get
\begin{align*}
    S_1 \,\ll&\, \,\sum_{m_{1}|q_1|(m_1)^{\infty}} \,\frac{q_1}{m_1^{1/3}}\,m^{2\theta +\epsilon}_1\,M_1\,\frac{M_1}{q},
\end{align*}
where $\theta \leq 5/14$. Since the sum $S_2$ is similar to $S_1$, we can write
\begin{align*}
  B(m_2,m'_2) \ll X^{\epsilon}\mathop{\text{sup}}_{M_1 \ll M_0} \sum_{m_{1}|q_1|(m_1)^{\infty}} \,\frac{q_1}{m_1^{1/3}}\,m^{2\theta +\epsilon}_1 \frac{M^{4/3}_1}{q} \ll X^{\epsilon}\,\sum_{m_{1}|q_1|(m_1)^{\infty}} \,\frac{q_1}{m_1^{1/3}}\,m^{2\theta +\epsilon}_1 \frac{M^{4/3}_0}{q}.  
\end{align*}
Now, using $M_0 = M/m^2_1$ and $\theta \leq 5/14$, we get
\begin{align*}
  B(m_2,m'_2) \, \ll  \,\frac{M^{4/3}}{q}. 
\end{align*}
Finally, by plugging this bound into the expression for $\Omega_0$, we get
 \begin{align*}
\Omega_0   \,\ll\, \frac{H_0\,N^{2/3}_0\,M^{4/3}\, q^5}{Q^2\,X^{1/3}\,n^{5/3}_1} .
	\end{align*}  Now, from equations \eqref{A32} and \eqref{c32}, we have
	\begin{align}\label{b10}
		\mathcal{D}^0_{1}(H,X) &\ll \frac{X^{4/3}\,H_0^{1/4}}{QH^{1/4}}\,\,\sum_{q \leq Q}\,\frac{1}{q^{7/2}}\,\sum_{n_1| q}\,\,n^{1/3}_1 \,\Theta^{1/2}(\Omega_0)^{1/2}.
	\end{align} 
Plugging in our bound of $\Omega_0$ and using $ N_0 = N/n^2_1$, we get that our main sum $\mathcal{D}^0_{1}(H,X) $ is dominated by
	\begin{align*}
	&\, \frac{X^{4/3}\,H_0^{1/4}}{QH^{1/4}}\,\sum_{q \leq Q}\,\frac{1}{q^{7/2}}\,\sum_{n_1 |q} n^{1/3}_1\, \left(\frac{H_1\,N^{2/3}_1\,M^{4/3}\, q^5}{Q^2\,X^{1/3}\,n^{5/3}_1}  \right)^{1/2}\,\Theta^{1/2}\\
		&\ll \frac{X^{7/6} \,N^{1/3}\,H_0^{3/4}\,M^{2/3}}{Q^2\,H^{1/4}\,}\,\,\sum_{n_1 \leq q}\,\frac{1}{n^{7/6}_1}\,\Theta^{1/2}.
  \end{align*}
For $k \geq 7/6$, using the value of $\Theta$ from equation \eqref{c31}, it is a simple observation that by applying Cauchy-Schwarz inequality to the $n_1$-sum, we can get
\begin{align}\label{b12}
\sum_{n_1 \ll q}\frac{\Theta^{1/2}}{n^{k}_1} \,\ll\, N^{1/6}.    
\end{align}
Using this result, we get
  \begin{align*}
\mathcal{D}^0_{1}(H,X) &\,\ll\, \frac{X^{7/6} \,N^{1/2}\,H_0^{3/4}\,M^{2/3}}{Q^2\,H^{1/4}}.
	\end{align*} Now using
	\begin{align}\label{f2}
		N \, \ll \,\frac{Q^3}{X}, \hspace{0.5cm} M \, \ll\, \frac{Q^3}{X}, \hspace{0.5cm} H_0 \,\ll\, \frac{Q^2}{H},
	\end{align} we finally get
	\begin{align*}
		\mathcal{D}^0_{1}(H,X) \ll \frac{Q^3}{H}.
	\end{align*} This is our desired result. 
\end{proof}

\vspace{0.1cm} 
\subsection{Estimation of non-zero frequency} There are a total of three possible cases of non-zero frequencies i.e. when at least one of $n_2$ and $h$ is non-zero. In this subsection, we will estimate $\Omega$ given in \eqref{B15} for all these three cases and denote their sum by $\Omega_{\neq 0}$. Then we will estimate our principal sum  $\mathcal{D}_1(H, X)$ given in equation \eqref{A32} in all these cases and we denote it by $\mathcal{D}^{\neq 0}_{1}(H,X)$.

\vspace{0.2cm}

\begin{lemma}\label{B22}
	For the non-zero frequency cases, we have
	\begin{align*}
		\mathcal{D}^{\neq 0}_{1}(H,X) \ll \frac{X^{1/2}\,Q^{3/2}}{\,H^{1/2}} +  \frac{X^{1/2}\,Q^{2}}{\,H} +  \frac{Q^{5/2}}{\,H^{1/2}}.
	\end{align*}
\end{lemma}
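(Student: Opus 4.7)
The plan is to split the sum $\sum_{h}\sum_{n_2}|\mathfrak{C}(q;h,n_2)|\,|\mathcal{J}(h,n_2)|$ inside $\Omega$ from equation \eqref{B15} into the three non-zero frequency subcases, estimate $\Omega$ in each subcase by feeding in the integral bound from Lemma \ref{f1} together with the appropriate character-sum bound from Lemma \ref{M3} or Lemma \ref{M4}, and then substitute into equation \eqref{A32}.

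In the subcase $n_2\neq 0$, $h\neq 0$, Lemma \ref{f1} says $\mathcal{J}(h,n_2)\ll q^2/Q^2$ and that $\mathcal{J}$ is negligible outside $|n_2|\ll N_2=(XN)^{1/3}/(n_1N_1)$ and $|h|\ll H_2=q/H_1$, so the pair $(n_2,h)$ contributes a count of size $N_2H_2$. Combining this with $|\mathfrak{C}(q;h,n_2)|\ll q^4(q_1q_2'')^{1/2}/(n_1^{3/2}m_1)$ from Lemma \ref{M3}, and then treating the $(m_2,m_2')$-sum by Cauchy--Schwarz followed by Lemma \ref{ramanubound} (so that the typical contribution of each sum reduces essentially to its length $M_0$), I would arrive at a bound of the shape $\Omega_{n_2,h\neq 0}\ll (q^5/Q)(q_1q_2'')^{1/2}M_0^{4/3}/(n_1^{3/2}m_1)$ up to $X^\epsilon$. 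The subcase $n_2\neq 0$, $h=0$ is handled identically except that $\mathcal{J}(0,n_2)$ enjoys the stronger bound $q^3/(Q^2\sqrt{HH_1})$, which will produce the $Q^{5/2}/H^{1/2}$ contribution after compensating for the loss of the $h$-summation. The subcase $n_2=0$, $h\neq 0$ uses Lemma \ref{M4} for the character sum and the generic $\mathcal{J}(h,0)\ll q^2/Q^2$ bound from Lemma \ref{f1}; this case produces the $X^{1/2}Q^2/H$ term.

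With $\Omega^{1/2}$ in hand in each subcase, I would plug back into \eqref{A32}, bound the $n_1$-sum against $\Theta^{1/2}$ by the inequality \eqref{b12}, and then carry out the $q\leq Q$ sum. For this last step I would use the factorization $q=q_1q_2'q_2''$ introduced in Lemma \ref{M3}: the $q_1|(n_1m_1)^\infty$ divisor sum is controlled trivially, while the square-full factor $q_2''$ is controlled by Lemma \ref{ll2}, which furnishes the square-root cancellation $\sum_{q_2''\leq Q}\sqrt{q_2''}\cdot(\cdot)\ll Q^{1/2}(\cdot)$ needed to absorb the $(q_1q_2'')^{1/2}$ factor coming from the character-sum bound. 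Finally, substituting the generic sizes $N,M\ll Q^3/X$ and $H_0\ll Q^2/H$ from \eqref{f2} into the three expressions yields, respectively, the three claimed terms $X^{1/2}Q^{3/2}/H^{1/2}$, $X^{1/2}Q^2/H$, and $Q^{5/2}/H^{1/2}$.

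The main obstacle is the careful bookkeeping around the multiplicative decomposition $q=q_1q_2'q_2''$: the character-sum bound of Lemma \ref{M3} loses a factor $(q_1q_2'')^{1/2}$, and one must verify that the combined effect of (i) the divisor-style sums over $n_1, m_1|q$ with $q_1|(n_1m_1)^\infty$, and (ii) Lemma \ref{ll2} for the square-full $q_2''$, contributes only $q^\epsilon$ after the $q$-sum, so that one effectively recovers square-root cancellation in the $a$-sum of the original character sum $\mathcal{C}^+$. A secondary point of care is the subcase $n_2\neq 0$, $h\neq 0$, where the pair-count $N_2H_2$ is large; one must check that the $q^2/Q^2$ decay of $\mathcal{J}$ and the $q^4/(n_1^{3/2}m_1)$ bound of $\mathfrak{C}$ together still deliver the announced size once all powers of $q$ are collected and the generic $Q=X^{1/2}$ is imposed.
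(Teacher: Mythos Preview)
Your overall strategy coincides with the paper's proof: split $\Omega$ in \eqref{B15} into the three non-zero frequency subcases, insert the integral bounds from Lemma \ref{f1} and the character-sum bounds from Lemmas \ref{M3}--\ref{M4}, estimate the $(m_2,m_2')$-sums via Cauchy--Schwarz and Lemma \ref{ramanubound}, plug into \eqref{A32}, control the $n_1$-sum with \eqref{b12}, and handle the $q=q_1q_2'q_2''$ decomposition using Lemma \ref{ll2} for the square-full part.

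Two bookkeeping points to fix. First, your attribution of subcases to final terms is swapped: the case $n_2\neq 0$, $h=0$ (with $\mathcal{J}(0,n_2)\ll q^3/(Q^2\sqrt{HH_1})$ and $n_2$-count $N_2$) produces the term $X^{1/2}Q^2/H$, while the case $n_2=0$, $h\neq 0$ (with $\mathcal{J}(h,0)\ll q^2/Q^2$ and $h$-count $H_2$, the surviving $N_1$ from the prefactor going up to $N_0$) produces $Q^{5/2}/H^{1/2}$. Second, your displayed shape for $\Omega_{n_2,h\neq 0}$ is off: the correct form carries a factor $(XN)^{1/3}$ and has $Q^2$ rather than $Q$ in the denominator, i.e.\ $\Omega_{n_2,h\neq 0}\ll (XN)^{1/3} q^5 (q_1q_2'')^{1/2} M^{4/3}/(n_1^{3/2} Q^2)$ after absorbing the $m_1$-sum. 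Neither issue affects the final bound once all three pieces are summed, but you should correct them before writing the argument out in full.
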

\begin{proof}
From equation \eqref{B15}, we have 
  \begin{align*}
  \Omega \,\ll&X^{\epsilon}\,\mathop{\mathop{\text{sup}}_{N_1 \ll N_0}}_{H_1 \ll H_0}\,\frac{H_1\,N_1\,n_1}{q^2}\sum_{m_{1}|q}\,m^{2/3}_1  \,\sum_{m_{2}\ll M_0} \frac{|A_{\pi_2}(m_{1},m_{2})|}{ m^{1/3}_{2}}\,\sum_{m'_{2}\ll M_0}\frac{|A_{\pi_2}(m_{1},m'_{2})|}{ (m'_{2})^{1/3}}\notag\\
 &\times \,\sum_{0\leq |h| \leq H_2}\,\sum_{0\leq |n_2| \leq N_2} \, |\mathfrak{C}(q;h,n_2)|\,|\mathcal{J}(h,n_2)|.    
  \end{align*}
As $\Omega_{\neq 0}$ stands for the sum of contribution of non-zero frequency in all three cases, it is dominated by 
\begin{align*}
 &X^{\epsilon}\,\mathop{\mathop{\text{sup}}_{N_1 \ll N_0}}_{H_1 \ll H_0}\frac{H_1N_1 n_1}{q^2} \,\left(\sum_{0 < |h| \leq H_2}\,\sum_{0< |n_2| \leq N_2} \, |\mathfrak{C}(q;h,n_2)|\,|\mathcal{J}(h,n_2)| \right. \\
 &\,\,\,\,\,\, \left. + \sum_{0 <|n_2| \leq N_2} \, |\mathfrak{C}(q;0,n_2)|\,|\mathcal{J}(0,n_2)| + \sum_{0< |h| \leq H_2}\, \, |\mathfrak{C}(q;h,0)|\,|\mathcal{J}(h,0)|\right)\,C(m_2,m'_2),
\end{align*}
where $C(m_2, m'_2)$ is given by
\begin{align*}
 \,\sum_{m_{1}|q_1|(n_1m_1)^{\infty}}\,\,  \sum_{m_{2}\ll M/m^2_1} \frac{|A_{\pi_2}(m_{1},m_{2})|}{ m^{1/6}_{1}m^{1/3}_{2}}\,\sum_{m_{1}|q_1|(n_1m_1)^{\infty}}\,\,  \sum_{m'_{2}\ll M/(m'_1)^2} \frac{|A_{\pi_2}(m_{1},m'_{2})|}{ m_1^{1/6}(m'_2)^{1/3}}.      
 \end{align*}
Now, we use Lemma \ref{M3}, Lemma \ref{M4} for the bound of character sum, and Lemma \ref{f1} for the integral transform in all three cases. Also, estimating the $n_2$- and $h$-sum trivially using their dual length given in Lemma \ref{f1}, we notice that $\Omega_{\neq 0}$ is dominated by 
  \begin{align*}
 \mathop{\mathop{\text{sup}}_{N_1 \ll N_0}}_{H_1 \ll H_0}\frac{H_1N_1q^2 (q_1q''_2)^{1/2}}{n^{1/2}_1}\left(\frac{q}{H_1}\frac{(XN)^{1/3}}{n_1N_1}\frac{q^2}{Q^2} + \frac{(XN)^{1/3}}{n_1N_1}\frac{q^3}{Q^2\sqrt{HH_1}} + \frac{q}{H_1}\frac{q^2}{Q^2}\right)C(m_2,m'_2),
	\end{align*} 
 where we are writing $q = q_1q'_2q''_2$ as in Lemma \ref{M3}. Note that
\begin{align*}
 &\sum_{m_{1}|q_1|(n_1m_1)^{\infty}}\,\,\sum_{m_{2}\ll M/m^2_1} \frac{|A_{\pi_2}(m_{1},m_{2})|}{ m^{1/6}_{1}m^{1/3}_{2}} \,\ll\,  \sum_{m_{1}\ll q}\,\sum_{m_{2}\ll M/m^2_1} \frac{|A_{\pi_2}(m_{1},m_{2})|}{ m^{1/6}_{1}m^{1/3}_{2}}\\
 &\ll\left(\sum_{m_{1}\ll q}\,\,\sum_{m_{2}\ll M/m^2_1} |A_{\pi_1}(m_1,m_2)|^2\right)^{1/2}\left(\sum_{m_{1}\ll q}\frac{1}{m^{1/3}_1}\sum_{m_{2}\ll M/m^2_1}\frac{1}{m^{2/3}_2}\right)^{1/2} \ll M^{2/3}, 
\end{align*} 
where we have used Lemma \ref{ramanubound} for the estimate of the first factor. By using the above estimates for the $m_2$-sum and similarly, for $m'_2$-sum, we get
 \begin{align*}
 C(m_2,m'_2) \,\ll\,M^{4/3}.       
 \end{align*}
 Hence, we get
 \begin{align*}
 \Omega_{\neq 0} \,\ll\, \frac{M^{4/3}\,q^5 (q_1q''_2)^{1/2}}{Q^2}\left(\frac{(XN)^{1/3}}{n^{3/2}_1} + \frac{(XN)^{1/3}\,H^{1/2}_0}{n^{3/2}_1\,\sqrt{H}} + \frac{N_0}{n^{1/2}_1}\,\right).    
 \end{align*}
Now, from equations \eqref{A32} and \eqref{c32}, we have
	\begin{align*}
		\mathcal{D}^{\neq 0}_{1}(H,X) &\ll \frac{X^{4/3}\,H_0^{1/4}}{QH^{1/4}}\,\,\sum_{q \leq Q}\,\frac{1}{q^{7/2}}\,\sum_{n_1| q}\,\,n^{1/3}_1 \,\Theta^{1/2}(\Omega_0)^{1/2}.
	\end{align*} 
 Using the above bound of $\Omega_{\neq 0}$ and Lemma \ref{ll2} for the estimation of the $q''_2$-sum, we obtain that $\mathcal{D}^{\neq 0}_{1}(H,X)$ is dominated by
	\begin{align*}
	&\frac{X^{4/3}\,M^{2/3}\,H_0^{1/4}}{Q^2\,H^{1/4}}\,\,\sum_{n_1 \ll q}\,\frac{\Theta^{1/2}}{n^{-1/3}_1}\,\left(\frac{(XN)^{1/3}}{n^{3/2}_1} + \frac{(XN)^{1/3}\,H^{1/2}_0}{n^{3/2}_1\,\sqrt{H}} + \frac{N}{n^{5/2}_1}\,\right)^{1/2}\\
 &\,\,\,\,\,\,\,\times\,\mathop{\sum_{n_1|q_{1}| (n_1m_1)^{\infty} }}\,\mathop{\sum_{q'_2 \leq Q/q_1}}_{q'_2 \text{is square free}}\,\mathop{\sum_{q''_2 \leq Q/q_1q'_2}}_{4q''_2 \text{is square full}}\,\frac{(q_1q''_2)^{1/4}}{(q_1q'_2q''_2)}\\
 &\ll\,\frac{X^{4/3}M^{2/3}H_0^{1/4}}{Q^2\,H^{1/4}}\left((XN)^{1/6}\sum_{n_1 \ll q}\frac{\Theta^{1/2}}{n^{5/12}_1} + \frac{(XN)^{1/6}H^{1/4}_0}{H^{1/4}}\sum_{n_1 \ll q}\frac{\Theta^{1/2}}{n^{5/12}_1} + N^{1/2} \sum_{n_1 \ll q}\frac{\Theta^{1/2}}{n^{11/12}_1}\right)\\
 &\,\,\,\,\,\,\,\times\,\mathop{\sum_{n_1|q_{1}| (n_1m_1)^{\infty} }}\,\frac{1}{q_1^{3/4}}\,\mathop{\sum_{q'_2 \leq Q/q_1}}_{q'_2 \text{is square free}}\,\frac{1}{q'_2}\,\\
& \ll\,\frac{X^{4/3}\,M^{2/3}\,H_0^{1/4}}{Q^2\,H^{1/4}}\,\,\left( (XN)^{1/6}\sum_{n_1 \ll q}\frac{\Theta^{1/2}}{n^{7/6}_1} + \frac{(XN)^{1/6}H^{1/4}_0}{H^{1/4}}\sum_{n_1 \ll q}\frac{\Theta^{1/2}}{n^{7/6}_1} + N^{1/2} \sum_{n_1 \ll q}\frac{\Theta^{1/2}}{n^{5/3}_1}\right).
\end{align*} 
By using the result of equation \eqref{b12} for the $n_1$-sums and plugging in the values of $N,M $ and $H_0$ given in equations \eqref{f2}, we finally get
	\begin{align*}
		\mathcal{D}^{\neq 0}_{1}(H,X) \ll \frac{X^{1/2}\,Q^{3/2}}{\,H^{1/2}} +  \frac{X^{1/2}\,Q^{2}}{\,H} +  \frac{Q^{5/2}}{\,H^{1/2}}.
	\end{align*} This is our desired result.
\end{proof}
\vspace{0.5cm}

\subsection{Final Estimates} We have,
\begin{align*}
	\mathcal{D}(H,X) \,\ll\, &\mathcal{D}_1(H,X) = \mathcal{D}^{0}_1(H,X) + \mathcal{D}^{\neq 0}_{1}(H,X). 
\end{align*} By substituting the bounds of $\mathcal{D}^{0}_1(H,X)$, $D^{\neq 0}_{1}(H,X)$ from Lemma \ref{B18}, Lemma \ref{B22}, respectively, we get
\begin{align*}
	\mathcal{D}(H,X) \ll& \,\left\{\frac{Q^3}{\,H} +\frac{X^{1/2}\,Q^{3/2}}{\,H^{1/2}}  + \frac{X^{1/2}\,Q^{2}}{\,H} + \frac{Q^{5/2}}{\,H^{1/2}} \right\}.
\end{align*}
By using  $Q = \sqrt{X}$, for $\delta >0$, we have

\begin{align}
	\mathcal{D}(H,X) \ll X^{1-\delta + \epsilon}  \hspace{0.5cm}\text{whenever} \hspace{0.5cm} X^{1/2+\delta} \leq H \leq X, 
\end{align} which is our desired result. 
\vspace{0.7cm}

\section{Proof of Lemma \ref{FK}}\label{PF}
In this section, we will present the proof of Lemma \ref{FK}. Referring to Katz's book \cite{KAT} and \cite{FKM} to become acquainted with the notations and terminology utilized in the proof.

\begin{proof}
For a prime $p$ and the natural numbers $m, n$, we have, $S(m, n;p) = S(1, mn;p)$ whenever $p \nmid m$. We also write $S(1, u ;p) = \text{Kl}_2(u, p)$, where  $\text{Kl}_2(u, p)$ denote the classical Kloosterman sum in two variables which is defined as
\begin{align*}
\text{Kl}_2(u, p) = \mathop{\sum_{x y \equiv u \,(\bmod p)}}_{x, y\, \in\, \mathbb{Z}/p\mathbb{Z}}\, e\left(\frac{x+y}{p}\right).    
\end{align*}
Let $\text{Kl}_2$ be the associated Kloosterman sheaf, then $\forall x \in \mathbb{F}^{\times}_p$, $\text{tr}(Frob_{x}|\text{Kl}_2)  = \text{Kl}_2(x; p)$.
In this terminology, we want to bound the sum
\begin{align*}
\,\sideset{}{^\star} \sum_{x\;\mathrm{mod} \; p}\,\,e\left(\frac{cx}{p}\right)\,\text{Kl}_2\left( c_1x(x+h); p\right)\, \text{Kl}_2\left(c_2 x; p\right)
\,\,\text{Kl}_2\left( c_3(x+h),; p\right),    
\end{align*}
where $c, c_1, c_2, c_3 \not\equiv 0 (\bmod p)$. We know that (see \cite{KAT}) $\text{Kl}_2$ is pure of weight $0$, rank $2$, lisse outside $\{0, \infty\}$, tame at $0$, wild at $\infty$, $\text{Swan}_{\infty} = 1$ with a single break at $1/2$. Also
\begin{align*}
 G_{\text{geom}}(\text{Kl}_2) =   G_{\text{arith}}(\text{Kl}_2) = SL(2).  
\end{align*}
Let $\mathcal{L}_c$ be the Artin-Schreier sheaf attached to the character $x \longrightarrow e\left({cx}/{p}\right)$ and for $1 \leq i \leq 3$, $\mathcal{K}_i = \mathcal{F}^*_i \text{Kl}_2$ with $\mathcal{F}_1(x) = c_1x (x+h)$, $\mathcal{F}_2(x) = c_2x$, $\mathcal{F}_3(x) = c_3(x+h)$. For $1 \leq i \leq 3$ and $x \neq 0, -h$, we have 
\begin{align*}
 \text{tr}(Frob_{x}|\mathcal{K}_i) := \mathcal{K}_i(x)   =  \text{Kl}_2 ( \mathcal{F}_i(x); p).
\end{align*}
Since $\mathcal{L}_c$ is a sheaf of rank $1$. For any $1 \leq i, j \leq 3$ with $i \neq j$, we have $\mathcal{K}_i \not\simeq_{\text{geom}}  \mathcal{L}_c \otimes \mathcal{K}_j$ and this is true for every sheaf of rank $1$. Since $\mathcal{K}_1$ has $\text{Swan}_{\infty} = 2$ with unique break at $1$ (this is the pullback of $\text{Kl}_2$ by a degree two polynomial). And $\mathcal{K}_2, \mathcal{K}_3$ have $\text{Swan}_{\infty} = 1$ with unique break at $1/2$. Since the sheaf of rank $1$ has integral weight $\geq 1$. So, $ \mathcal{K}_1$ has only break at $1$ and $\mathcal{L}_c \otimes \mathcal{K}_1$ has a break at $1/2$. Since $ G_{\text{geom}}(\text{Kl}_2)= SL(2)$, then $ G_{\text{geom}}(\mathcal{K}_i)= SL(2)$ (as $SL(2)$ has a non-proper normal subgroup of index $2$). By using Goursat-Kolchin-Ribet (see \cite[$1.8$]{MKA}), we have
\begin{align*}
G_{\text{geom}}(\mathcal{K}_1 \otimes \mathcal{K}_2 \otimes \mathcal{K}_3)   = SL(2) \times   SL(2) \times   SL(2).
\end{align*}
This means, for all $x \neq 0, -h$, we have 
\begin{align*}
    \text{tr}(Frob_{x}|\mathcal{K}_1 \otimes \mathcal{K}_2 \otimes\mathcal{K}_3)  = \mathcal{K}_1(x) \times \mathcal{K}_2(x) \times \mathcal{K}_3(x).
\end{align*}
So, the sheaf $\mathcal{K}_1 \otimes \mathcal{K}_2 \otimes\mathcal{K}_3$ is geometrically irreducible of rank $2^3$. And for any rank $1$ sheaf, for instance $\mathcal{L}_c$, the sheaf $\mathcal{K}_1 \otimes \mathcal{K}_2 \otimes\mathcal{K}_3 \otimes \mathcal{L}_c$ is geometrically irreducible and ${H}^2(\mathcal{K}_1 \otimes \mathcal{K}_2 \otimes\mathcal{K}_3 \otimes \mathcal{L}_c) = 0$. By using the Grothendieck-Lefschetz trace formula, we have
\begin{align*}
\,\mathop{\sideset{}{^\star} \sum_{x\;\mathrm{mod} \; p}}_{x \neq 0, -h}\,\text{tr}(Frob_{x}|\mathcal{K}_1 \otimes \mathcal{K}_2 \otimes\mathcal{K}_3 \otimes \mathcal{L}_c)  = \sideset{}{^\star} \sum_{x\;\mathrm{mod} \; p}\,\mathcal{K}_1(x)\, \mathcal{K}_2(x)\, \mathcal{K}_3(x)\,e\left(\frac{cx}{p}\right) \ll p^2. 
\end{align*}
This is our desired result.
\end{proof}

\vspace{0.7cm}

	\section{Sketch of the proof of Theorem 2}
		This section briefly sketches the proof of Theorem \ref{th2}. The detailed calculations and steps will be almost similar to our proof of Theorem \ref{th1}. For Theorem \ref{th2}, our object of study is the following
  
	\begin{align*}
		\mathcal{L}(H,X) = \frac{1}{H}\sum_{h \sim H}\,V_1\left(\frac{h}{H}\right)\, \sum_{n \sim X}^\infty  A_{\pi_1}(1,n)  A_{\pi_2}(1,n+h)\,V_1\left(\frac{n}{X}\right) .
	\end{align*}
	An application of the DFI delta method given in Section \ref{section3} will transform our main object into
	\begin{align}\label{d2}
		\mathcal{L}(H,X) = \frac{1}{HQ}&\sum_{q\sim Q}\frac{1}{q}\,\,\, \sideset{}{^\star} \sum_{a\bmod q}\int_{\mathbb{R}}A(u)\, \psi(q,u)\, \sum_{h \sim H}  \,e\left(\frac{ah}{q}\right)\, V_1\left(\frac{h}{H}\right)\notag\\ 
		\times& \sum_{n \sim X}  A_{\pi_1}(1,n)\, e\left(\frac{an}{q}\right)\,V_2\left(\frac{n}{X}\right)\notag\\
		\times& \sum_{m \sim Y}   A_{\pi_2} (1,m)\,e\left(\frac{-am}{q}\right)\,V_3\left(\frac{m}{Y}\right)\,du,
	\end{align}  
 where $V_1, V_2$ and $V_3$ are compactly supported smooth functions supported on $[1, 2]$. Here, we also take the same generic case as in the sketch of Theorem \ref{th1} and $1 \leq H \leq X$. After applying the delta method, we need to save $X^{1+\delta}$ in the above expression of $\mathcal{L}(H, X)$ to obtain a non-trivial bound.
 Applying the Poisson summation formula given in Lemma \ref{poisson} to the sum over $h$, we roughly arrive at
	\begin{align*}
		\sum_{h \sim H}\, e\left(\frac{ah}{q }\right)\, V_1\left(\frac{h}{H}\right)= \frac{H}{Q}  \sum_{h \sim Q/H} \sum_{\alpha \bmod q}  e \left( \frac{\alpha (a+h) }{q} \right) \, \mathcal{V}_1\left(\frac{hH}{q}\right),
	\end{align*} 
where $\mathcal{V}_1\left({hH}/{q}\right)$ is an integral transform. In this step, we save
	\begin{align*}
		\text{saving} = \sqrt{\frac{\text{Initial length of summation}}{\text{Dual length of summation}}} = \sqrt{\frac{H}{Q/H}} = \frac{H}{Q^{1/2}}.
	\end{align*}
	Next, we apply the $GL(3)$-Voronoi summation formula from Lemma \ref{gl3voronoi} to the sum over $n$ and $m$. Roughly, the $n$-sum (similarly the $m$-sum) will transform into
	\begin{align*}
		\sum_{n \sim X}A_{\pi_1}(1,n)e\left(\frac{an}{q }\right)\,V_2\left(\frac{n}{X}\right) = \frac{X^{2/3}}{q} \sum_{n_{2}\sim Q^3/X}  \frac{A_{\pi_1}(n_{2},1)}{ n^{1/3}_{2}} S\left( \bar{a}, \pm n_{2}; q \right)\,\mathcal{V}_2\left(\frac{n_2}{q^3}\right),
	\end{align*}
 where $\mathcal{V}_2\left(\frac{n_2}{q^3}\right)$ is an integral transform. Our savings in the $n$- and $m$-sums will be  $X/Q^{3/2}$ and  $X/Q^{3/2}$ , respectively. After applying the summation formulae, our main object of study $\mathcal{L}(H, X)$ will transform into the following expression
	\begin{align}\label{d1}
		\mathcal{L}(H,X) =&\frac{X^{4/3}}{Q^{5}}\,\sum_{q\sim Q} \,\,  \sum_{n_{2} \sim  Q^3/X}  \frac{A_{\pi_1}(n_{2},1)}{ n^{1/3}_{2}} \, \sum_{m_{2}\sim  Q^3/Y}  \frac{A_{\pi_2}(m_{2},1)}{ m^{1/3}_{2}}\,  \\
		\notag\times &  \,\,\,\sum_{h\sim Q/H} \, \mathcal{C}(n_2,m_2, h; q)\, \mathcal{K}(....) ,
	\end{align} where $\mathcal{K}(....)$ is some integral transform, and the character sum is given by
	\begin{align*}
		\mathcal{C}( n_2,m_2, h; q) = \sideset{}{^\star} \sum_{a\bmod q}\, \sum_{\alpha \bmod q} e\left( \frac{\alpha{(a+h)}}{q}\right) S\left( \bar{a},  \pm n_{2}; q\right)  S\left( \bar{a},  \pm m_{2}; q\right) \rightsquigarrow  \, q \, \mathcal{C}_2(...),
	\end{align*} here $\mathcal{C}_2(...)$ is another character sum modulo $q$ in which we can show the square root cancellation as we have shown in the proof of Theorem \ref{th1}. So we save $\sqrt{Q}$ in the $a$ sum. Our total saving so far over the trivial bound $X^2$ is
	\begin{align*}
		\frac{X}{Q^{3/2}} \times \frac{X}{Q^{3/2}} \times \frac{H}{Q^{1/2}} \times \sqrt{Q} =  \frac{X H}{Q}.
	\end{align*}
This saving is enough to get our desired non-trivial bound for the average version of $GL(3)\times GL(3)$ SCS $\mathcal{L}(H, X)$ whenever $X^{1/2+\delta} \leq H \leq X$.
 \vspace{0.5cm}

{\bf Acknowledgement:} 
The authors are very grateful to the anonymous referee for numerous helpful comments, which significantly improved the presentation of this paper. The authors would like to thank Ritabrata Munshi for the helpful discussions. The authors want to thank Philippe Michel and Will Sawin for their invaluable assistance in dealing with the character sum. The authors also want to thank Sumit Kumar and Prahlad Sharma for many helpful discussions during the work. The authors are also thankful to the Indian Institute of Technology Kanpur for its wonderful academic atmosphere. During this work, S. K. Singh was partially supported by D.S.T. Inspire faculty fellowship no. DST/INSPIRE/$04/2018/000945$.

{}


\begin{thebibliography}{}



 \bibitem{r20}
	{K. Aggarwal, W. H. Leung, and R. Munshi:}
	\emph{Short second moment bound and Subconvexity for $GL(3)$ $L$-functions},
	https://arxiv.org/abs/2206.06517.

\bibitem{r30}
	{S. Baier, T. D. Browning, G. Marasingha and L. Zhao:}
		\emph{Averages of shifted convolutions of d3 (n)},
		{Proc. Edinburgh Math. Soc.},
		$\textbf{55(3)}$ $(2012)$, {$551$--$576$}.

 
     \bibitem{r11}
	{V. Blomer:}
	\emph{Rankin-Selberg $L$-functions on the critical line},
	{manuscripta mathematica},
	$\textbf{117(2)}$ $(2005)$, {$111$--$133$}.


        \bibitem{r5}	 
	{V. Blomer:}
	\emph{Shifted convolution sums and subconvexity bounds for automorphic $L$-functions},
	{Int. Math. Res. Not.},
	$\textbf{2004(73)}$ $(2004)$,
	{$3905$--$3926$}.

 \bibitem{D}	
 {P. Deligne:}
 \emph{La conjecture de Weil I},
 Publ. Math. Inst. Hautes Études Sci.
 $\textbf{43}$ $(1974)$, {$273$--$307$}.	


 \bibitem{r25}
	{J.-M. Deshouillers:}
	\emph{Majorations en moyenne de sommes de Kloosterman}, In seminar on Number 
       Theory, $1981/1982$, pages Exp. No. $3$--$5$, Univ. Bordeaux I, Talence, $1982$.
	
\bibitem{r26}
         {J.-M. Deshouillers and H. Iwaniec}:
		\emph{An additive divisor problem},
		{J. Lond. Math. Soc.}, $\textbf{2(1)}$ $(1982)$, $1$--$14$.

\bibitem{DUKE}
	W. Duke, J. B. Friedlander, and H. Iwaniec:
	\emph{A quadratic divisor problem},
	{Invent. Math.}, \textbf{115}\,$(1994)$, $209$--$217$.


\bibitem{FKM}
  {E. Fouvry, E. Kowalski,  and P. Michel:}
  \emph{A study in sums of products}
  {Phil. Trans. R. Soc. A},
  $\textbf{373(2040)}$ $(2015)$,
		{$1$--$26$}.


 
\bibitem{r27}
	{J. B. Friedlander and H. Iwaniec:}
		\emph{Incomplete Kloosterman sums and a divisor problem}, 
		{Ann. Math.},
		$\textbf{121(2)}$ $(1985)$,
		{$319$--$344$}.


\bibitem{r17}
	{D. Goldfeld:}
	\emph{Automorphic forms and $L$-functions for the group $GL(n,\mathbb{R})$},
	$\textbf{99}$ {$(2006)$}, Cambridge University Press.
	
\bibitem{r4}
	{ D. Goldfeld and X. Li:}
	\emph{Voronoi formulas on $GL(n)$},
	{Int. Math. Res. Not.},
	$\textbf{2006}$ $(2006)$, {$86295$}.

 \bibitem{r29}
	{M. Harun and S. K. Singh:}
	\emph{Shifted convolution sum for $GL(3)\times GL(2)$ with weighted average},
	https://arxiv.org/abs/2210.11040.
	

 \bibitem{r28}
	{D. R. Heath-Brown:}
		\emph{The divisor function $ d_3 (n) $ in arithmetic progressions},
		{Acta Arith.},
		$\textbf{47(1)}$ $(1986)$
		{$29$--$56$}.
	

 \bibitem{r2}
	{B. Huang:}
	\emph{On the Rankin--Selberg problem},
	{Math. Ann.},
	$\textbf{381(3)}$ $(2021)$, $1217$--$1251$.

 \bibitem{r1}
	{H. Iwaniec and E. Kowalski:}
	\emph{Analytic number theory},
	$\textbf{53}$ $(2021)$,
	{Amer. Math. Soc.}.

 \bibitem{KAT}
 {N. M. Katz:} \emph{Gauss sums, Kloosterman sums, and monodromy groups},
 $\textbf{116}$ $(1988)$, Ann. Math. Studies, Princeton University Press, Princeton, NJ.


  \bibitem{MKA}
 {N. M. Katz:} \emph{Exponential sums and differential equations},
 $\textbf{124}$ $(1990)$, Ann. Math. Studies, Princeton University Press, Princeton, NJ.


\bibitem{KM}
 {H. Kim, appendix $1$ by Ramakrishnan and appendix $2$ by Kim and Sarnak:} \emph{Functoriality for the exterior square of $GL_{4}$ and the symmetric fourth of $GL_2$}, J. Amer. Math. Soc.
 $\textbf{16}(1)$ $(2003)$, {$139$--$183$}. 

	

 
\bibitem{r19}
	{E. Kowalski, P. Michel and J. VanderKam:}
	\emph{Rankin-Selberg $L$-functions in the level aspect},
	{Duke Math. J.},
    $\textbf{114(1)}$ $(2002)$, {$123$--$191$}.


\bibitem{r14}
	{X. Li:}
	\emph{Bounds for $GL(3)\times GL(2)$ $L$-functions and $GL(3)$ $L$-functions},
	{Ann. Math.},
	$\textbf{173(1)}$ $(2011)$, {$301$--$336$}.

 \bibitem{XL}
 {X. Li:}
 \emph{The central value of the Rankin-Selberg L-functions}, 
 Geom. Funct. Anal.,
 $\textbf{18}$ $(2009)$, {$1660$--$1695$}.
  

    \bibitem{r16}
	{T. Meurman:}
	\emph{On exponential sums involving the Fourier coefficients of Maass wave forms},
	J. Reine Angew. Math.,
	$\textbf{1998(384)}$  $(1998)$,  $192$--$207$.


\bibitem{r10}
	{P. Michel:}
	\emph{The subconvexity problem for Rankin-Selberg $L$-functions and equidistribution of Heegner points},
	{Ann. Math.},
	$\textbf{160(1)}$ $(2004)$, {$185$--$236$}.


\bibitem{r21}
	{Y. Motohashi:}
	\emph{The binary additive divisor problem},
	{Ann. Sci. de l'Ecole Norm. Superieure},
	$\textbf{27(5)}$ $(1994)$, $529$--$572$.


 \bibitem{r7}
	{R. Munshi:} 
	\emph{Shifted convolution sums for $GL(3)\times GL(2)$},
	{Duke Math. J.},
	$\textbf{162(13)}$ $(2013)$, {$2345$--$2362$}.
	


	\bibitem{r6}
	{ N. J. Pitt:}
	\emph{On shifted convolutions of $\zeta^3(s)$ with automorphic $L$-functions},
	{Duke Math. J.},
	$\textbf{77(2)}$ $(1995)$, {$383$--$406$}.


 
	\bibitem{r12}
	{P. Sarnak:}
	\emph{Estimates for Rankin--Selberg $L$-functions and quantum unique ergodicity},
	{J. Funct. Ana.},
	$\textbf{184(2)}$ $(2001)$, {$419$--$453$}.



 \bibitem{r9} 
	{Q. Sun:}
	\emph{Averages of shifted convolution sums for $GL(3)\times GL(2)$},
	{J. Number Theory},
	$\textbf{182}$ $(2018)$, {$344$--$362$}.
	
 \bibitem{r31}
	{E. C. Titchmarsh:}
	\emph{The Theory of Riemann Zeta-function $($revised by D.R.Heath-Brown$)$},
	{Clarendon Press, Oxford},
	{$1986$}.  


 
\bibitem{r23}
	{B. Topacogullari:}
	\emph{The shifted convolution of divisor functions},
	{Q. J. Math.},
	$\textbf{67(2)}$ $(2016)$, {$331$--$363$}.
	
	
\bibitem{r8}
	{P. Xi:}
	\emph{A shifted convolution sum for $GL(3)\times GL(2)$},
	{Forum Math.},
	$\textbf{30(4)}$ $(2018)$, {$1013$--$1027$}.

 


	
	
	
\end{thebibliography}
\end{document}